\theoremstyle{plain}
\newtheorem{theorem}{Theorem}[section]
\newtheorem{lemma}[theorem]{Lemma}
\newtheorem{proposition}[theorem]{Proposition}
\theoremstyle{definition}
\newtheorem{definition}[theorem]{Definition}
\newtheorem{example}[theorem]{Example}
\newtheorem{remark}[theorem]{Remark}
\newtheorem{assumption}[theorem]{Assumption}
\definecolor{mygreen}{RGB}{60, 150, 60}\definecolor{myorange}{RGB}{235, 155, 30}
\definecolor{myblue}{RGB}{100, 200, 255}
\definecolor{mybluedark}{RGB}{42, 90, 161}
\definecolor{mypurple}{RGB}{150, 50, 209}
\newcommand{\Rbb}{\mathbb{R}}
\newcommand{\Nbb}{\mathbb{N}}
\newcommand{\ind}{\mathbf{1}}
\newcommand{\Mcal}{\mathcal{M}}
\newcommand{\Pbb}{\mathbb{P}}
\newcommand{\Ebb}{\mathbb{E}}
\newcommand{\MM}{\mathcal{M}}
\DeclareMathOperator{\Var}{\mathbb{V}}
\DeclareMathOperator{\Sen}{\mathbb{S}}
\DeclareMathOperator{\Cos}{\mathbb{S}}
\newcommand{\Fcal}{\mathcal{F}}
\newcommand{\Pcal}{\mathcal{P}}
\newcommand{\Ncal}{\mathcal{N}}
\newcommand{\diff}{\,\textnormal{d}}
\DeclareMathOperator{\Div}{div}
\DeclareMathOperator{\id}{id}
\newcommand{\idmat}{\mathrm{I}}
\DeclareMathOperator{\Tan}{Tan}
\newcommand{\transport}{\mathbf{t}}
\DeclareMathOperator{\MLE}{MLE}
\DeclareMathOperator{\BLE}{BLE}
\DeclareMathOperator{\WPE}{WPE}
\DeclareMathOperator{\OLS}{OLS}
\DeclareMathOperator*{\argmax}{arg\,max}
\newcommand{\bW}{\boldsymbol{W}}
\begin{document}

\begin{frontmatter}
\runtitle{Wasserstein-Cram\'er-Rao Theory of Unbiased Estimation}
\title{Wasserstein-Cram\'er-Rao Theory\\ of Unbiased Estimation}

\begin{aug}
\author[A]{\fnms{Nicol\'as}~\snm{{Garc\'ia Trillos}}\ead[label=e1]{garciatrillo@wisc.edu}},
\author[B]{\fnms{Adam Quinn}~\snm{Jaffe}\ead[label=e2]{a.q.jaffe@columbia.edu}}
\and
\author[B]{\fnms{Bodhisattva}~\snm{Sen}\ead[label=e3]{b.sen@columbia.edu}}
\address[A]{Department of Statistics, University of Wisconsin Madison, WI\printead[presep={ ,\ }]{e1}}

\address[B]{Department of Statistics, Columbia University, New York, NY\printead[presep={ ,\ }]{e2,e3}}
\end{aug}

\begin{abstract}
    The quantity of interest in the classical Cram\'er-Rao theory of unbiased estimation (e.g., the Cram\'er-Rao lower bound, its exact attainment for exponential families, and asymptotic efficiency of maximum likelihood estimation)  is the variance, which represents the instability of an estimator when its value is compared to the value for an independently-sampled data set from the same distribution.
    In this paper we are interested in a quantity which represents the instability of an estimator when its value is compared to the value for an infinitesimal additive perturbation of the original data set; we refer to this as the ``sensitivity'' of an estimator.
    The resulting theory of sensitivity is based on the Wasserstein geometry in the same way that the classical theory of variance is based on the Fisher-Rao (equivalently, Hellinger) geometry, and this insight allows us to determine a collection of results which are analogous to the classical case:
    a Wasserstein-Cram\'er-Rao lower bound for the sensitivity of any unbiased estimator, a characterization of models in which there exist unbiased estimators achieving the lower bound exactly, and some concrete results that show that the Wasserstein projection estimator achieves the lower bound asymptotically. 
    We use these results to treat many statistical examples, sometimes revealing new optimality properties for existing estimators and other times revealing entirely new estimators.
\end{abstract}

\begin{keyword}[class=MSC]
\kwd[]{62B11, 62F10, 62F12, 35Q49, 49Q22}
\end{keyword}

\begin{keyword}
\kwd{exponential family}
\kwd{geometries on spaces of probability measures}
\kwd{Hellinger geometry}
\kwd{information geometry}
\kwd{optimal transport}
\kwd{maximum likelihood estimation}
\kwd{Wasserstein information}
\kwd{Wasserstein projection estimator}
\end{keyword}

\end{frontmatter}

\setcounter{tocdepth}{1}
\tableofcontents

\section{Introduction}\label{sec:intro}

    The setting of this paper involves a general parametric statistical model $\{P_{\theta}:\theta\in\Theta\}$ of probability measures on $\Rbb^d$, where we observe independent identically-distributed (i.i.d.) samples $X_1,\ldots, X_n$ from some $P_{\theta}$ for unknown $\theta\in\Theta\subseteq\Rbb^p$, and where the goal is to estimate $\chi(\theta)$ for some fixed transformation $\chi:\Theta\to\Rbb^k$; for the sake of notational simplicity in the Introduction we will assume $p=k=1$, but the body of the paper addresses the general case.
    More specifically, we are interested in unbiased estimators $T_n:(\Rbb^d)^n\to\Rbb$ of $\chi(\theta)$, meaning $\Ebb_{\theta}[T_n(X_1,\ldots, X_n)] =\chi(\theta)$ for all $\theta\in\Theta$, and various notions of optimality thereof.

    The perspective of classical statistics is to search for unbiased estimators $T_n$ for which the variance, denoted $\Var_{\theta}(T_n)$, is small.
    One motivation for this perspective is that the variance is a measure of the instability that the estimator would experience if the data were independently \textit{re-sampled}.
    That is, if $X_1',\ldots, X_n'$ denote i.i.d. samples from $P_{\theta}$ which are independent of $X_1,\ldots, X_n$, then we have
    \begin{equation*}
        \Var_{\theta}(T_n) = \frac{1}{2}\Ebb_{\theta}\left[|T_n(X_1',\ldots, X_n')-T_n(X_1,\ldots, X_n)|^2\right].
    \end{equation*}
    The classical Cram\'er-Rao theory provides a fundamental lower bound for the variance and provides various methods for achieving this lower bound, either exactly or asymptotically.

    Our perspective in this paper is that it may additionally be desirable to search for unbiased estimators which achieve a small value of some measure of the instability that the estimator would experience if the data were \textit{perturbed}.
    That is, if $X_1',\ldots, X_n'$ are defined via $X_i' := X_i+ \xi_i$ for $1\le i \le n$ where $\xi_1,\ldots \xi_n$ are i.i.d. samples from a Gaussian distribution with mean zero and variance $\varepsilon^2>0$ and which are independent of $X_1,\ldots, X_n$, then we define the \textit{$\varepsilon$-sensitivity} of the estimator $T_n$ as
    \begin{equation*}
        \Sen_{\theta,\varepsilon}(T_n) := \Ebb_{\theta}\left[\left|\frac{T_n(X_1',\ldots,X_n') - T_n(X_1,\ldots, X_n)}{\varepsilon}\right|^2\right].
    \end{equation*}
    A stylized but concrete setting where it may be desirable to achieve a small value of the $\varepsilon$-sensitivity is any statistical application where the data are collected with measurement error; in such settings, the $\varepsilon$-sensitivity describes the size of the difference between an estimator computed on the observed variables and the same estimator computed on the latent variables.

    Mathematically, the theory is slightly cleaner if we consider \textit{infinitesimal} versions of the perturbations above.
    That is, we consider the limit of $\varepsilon$-sensitivity as $\varepsilon \rightarrow 0$ and introduce what we call the \textit{sensitivity}
    \begin{equation*}
       \Sen_{\theta}(T_n) := \Ebb_{\theta}\left[\sum_{i=1}^{n}\left\|\nabla_{x_i} T_n(X_1,\ldots,X_n)\right\|^2\right],
    \end{equation*}
    which will be our focus in this paper; here, $\nabla_{x_i} T_n(X_1,\ldots,X_n)$ denotes the gradient of the function $T_n:(\Rbb^d)^n\to\Rbb$ with respect to the argument $x_i\in\Rbb^d$ for each $1\le i\le n$.
    The sensitivity coincides (up to a constant factor, depending on the convention) with the notion of the \textit{Dirichlet energy} studied in probability theory, partial differential equations, and potential theory (e.g., \cite[Section~2.2.5]{EvansPDE}, \cite[Chapter~8]{Brezis}, \cite[Chapter~XI]{Kellogg}, \cite{DellacherieMeyer}).
    We emphasize that, in most prior uses in mathematical statistics, the Dirichlet energy is interesting only insofar as it is useful for controlling the variance (say, via a Poincar\'e inequality or a log-Sobolev inequality); in contrast, our perspective in this work is that the Dirichlet energy itself is a quantity which may be of interest to control.

    \subsection{Illustrative Examples}

    We consider some examples in order to illustrate the problems involved in constructing unbiased estimators which achieve a small sensitivity.

    \begin{example}[Gaussian location family]
        Suppose that $P_{\theta}$ is the Gaussian distribution with mean $\theta$ and variance 1, and that the goal is to estimate $\theta\in\Rbb$.
        It is natural to consider the maximum likelihood estimator (MLE) given by the sample mean $T_n^{\MLE} = n^{-1}\sum_{i=1}^{n}X_i$, for which it is easy to calculate $$\Var_{\theta}(T_n^{\MLE}) = \Sen_{\theta}(T_n^{\MLE}) = \frac{1}{n}.$$
        It follows from the classical Cram\'er-Rao theory that the MLE achieves the optimal variance, so one can immediately show (say, via the Gaussian Poincar\'e inequality which states $\Var_{\theta}(T_n) \le \Sen_{\theta}(T_n)$ for any unbiased estimator $T_n$) that the MLE also achieves the optimal sensitivity.
        Thus, the sample mean is optimal for both the variance and the sensitivity among unbiased estimators of $\theta$.
        The sample mean is, in fact, the prototypical example of an estimator with small sensitivity.
        A direct calculation shows that we have $\nabla_{x_i}T_n^{\MLE} = n^{-1}$ deterministically for each $1\le i\le n$; in terms of the $\varepsilon$-sensitivity pre-limit, $T_n^{\MLE}$ introduces many cancellations which reduce the effect of additive noise.
    \end{example}

        \begin{example}[uniform scale family]\label{ex:Unif-scale}
        Suppose that $P_{\theta}$ is the uniform distribution on $[0,\theta]$, for $\theta >0$, and let us consider two natural estimators of $\theta$.
        One is the MLE given by $T_n^{\MLE}(X_1,\ldots, X_n)=\max\{X_1,\ldots, X_n\}$, and the other is the best linear estimator (BLE) given by twice the sample mean $T_n^{\BLE}(X_1,\ldots, X_n)=2n^{-1}\sum_{i=1}^{n}X_i$.
        It is easy to show that both estimators are (asymptotically) unbiased, and that their variances are:
        \begin{equation*}
            \Var_{\theta}(T_n^{\MLE}) = \frac{\theta^2}{n^2}\qquad\textnormal{ and }\qquad\Var_{\theta}(T_n^{\BLE}) = \frac{\theta^2}{3n}.
        \end{equation*}
        Observe that the MLE achieves the fast rate $n^{-2}$ (recall that the classical Cram\'er-Rao lower bound does not apply to this model) while the BLE achieves the usual rate $n^{-1}$.
        However, we can directly compute that their sensitivities are
        \begin{equation*}
            \Sen_{\theta}(T_n^{\MLE}) =1 \qquad\textnormal{ and }\qquad\Sen_{\theta}(T_n^{\BLE}) = \frac{4}{n},
        \end{equation*}
        where the first calculation follows because the MLE depends on exactly one (random) sample, i.e., $\nabla_{x_i}T_n = \ind\{X_i=X_{(n)}\}$; in the $\varepsilon$-sensitivity pre-limit the MLE is of constant order because it does not  introduce any cancellations.
        See Figure~\ref{fig:uniform} for a Monte Carlo simulation of the bias, variance, and $\varepsilon$-sensitivity (for $\varepsilon=10^{-4}$) in this model while $n$ grows ($n=10^2,\ldots, 10^4$).
        The simulations illustrate that, although our notion of sensitivity concerns infinitesimal perturbations, it captures the behavior of $\varepsilon$-sensitivity for small to moderate values of $\varepsilon>0$, which may be more directly relevant in practice.
        We also note that the simulation gives a preview of our later methodology, which concerns optimality properties for the Wasserstein projection estimator (WPE). 
    \end{example}

    \begin{figure}
        \centering
        \includegraphics[width=0.95\linewidth]{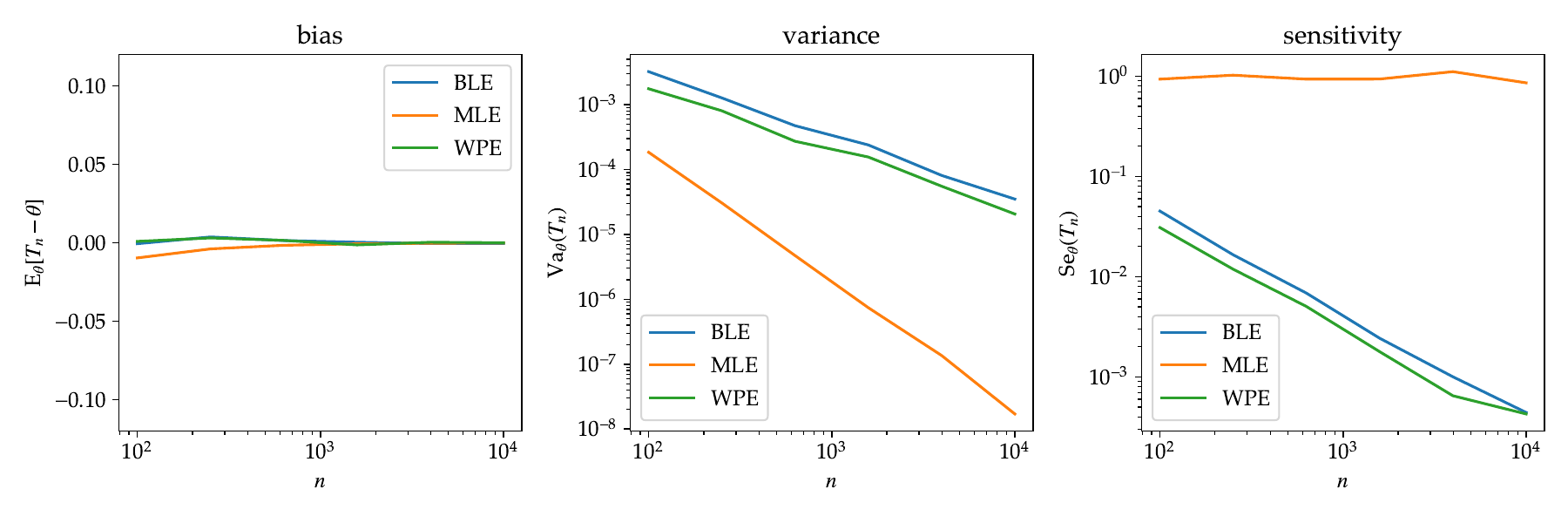}
        \caption{Computing the variance and sensitivity for three estimators in the uniform scale family of Example~\ref{ex:Unif-scale}: the best linear estimator (BLE), the maximum likelihood estimator (MLE), and the Wasserstein projection estimator (WPE).
        The MLE has variance of order $n^{-2}$ and sensitivity of constant order.
        The BLE has both variance and stability of order $n^{-1}$.
        The WPE has both variance and stability of order $n^{-1}$, and it achieves a smaller constant prefactor than the BLE for both quantities. See Example~\ref{ex:unif-scale-fam} for further details.}
        \label{fig:uniform}
    \end{figure}

    \begin{example}[Laplace location family]\label{ex:laplace-intro}
        Suppose that $P_{\theta}$ is the Laplace distribution with mean $\theta$ and variance 2, i.e., its density with respect to the Lebesgue measure is $p_{\theta}(\cdot) = \frac{1}{2}\exp(-|\cdot- \theta|)$.
        Let us consider two estimators for $\theta$ in this problem.
        The first is the MLE denoted $T_n^{\MLE}$, which is simply the sample median of $X_1,\ldots, X_n$.
        The second is the best linear estimator (BLE) which is the sample mean, $T_n^{\BLE}(X_1,\ldots,X_n)=n^{-1}\sum_{i=1}^{n}X_i$.
        It is easy to show that both estimators are (asymptotically) unbiased.
        Also, their variances can be computed as
        \begin{equation*}
            \Var_{\theta}(T_n^{\MLE}) \sim \frac{1}{n}\qquad\textnormal{ and }\qquad\Var_{\theta}(T_n^{\BLE}) = \frac{2}{n}
        \end{equation*}
        meaning both achieve the same rate of decay of $n^{-1}$ but the MLE has a smaller constant prefactor, as it must from the classical Cram\'er-Rao theory.        
        However, we can also compute
        \begin{equation*}
            \Sen_{\theta}(T_n^{\MLE}) =\begin{cases}
                \sfrac{1}{2} &\textnormal{if $n$ even} \\ 
                1 &\textnormal{if $n$ odd} \\ 
            \end{cases}\qquad\textnormal{ and }\qquad\Sen_{\theta}(T_n^{\BLE}) = \frac{2}{n},
        \end{equation*}
        where the first calculation follows because the MLE depends on exactly 1 or 2 observations in the sample, depending on the parity of $n$.
        In other words, the sensitivity of the MLE is of constant order, and the sensitivity of the BLE achieves the classical rate.
    \end{example}

    The examples above show that it is not immediately clear how to construct estimators that achieve a small value of sensitivity.
    In fact, we have seen that some estimators with small variance have the drawback that their sensitivity is non-vanishing.    
    The goal of this paper is to provide a general statistical theory of unbiased estimators that achieve the smallest possible sensitivity, and to provide detailed analyses in examples like the above and many more.
    
    \subsection{Summary of Results}

    This subsection summarizes our main results and also serves as an outline of the main body of the paper.
    Broadly speaking, our work addresses the following three questions:
    \begin{itemize}
        \item[(i)] How small can the sensitivity of an unbiased estimator be?
        \item[(ii)] When is it possible to achieve this lower bound exactly?
        \item[(iii)] Is there a general way to asymptotically achieve this lower bound?
    \end{itemize}
    Our main results yield essentially complete answers to these questions.

    Before stating the results, let us emphasize that our theory is, broadly speaking, analogous to the classical Cram\'er-Rao theory of variance.
    To aid the reader's understanding, we give Table~\ref{tab:statistics} which highlights the analogies between the statistical concepts in both theories.
    As we will explore in detail throughout the paper, both theories are derived from the same geometric principle, but by considering different geometries on the space of probability measures; the classical theory of variance comes from the Fisher-Rao (equivalently, Hellinger) geometry, and our theory of sensitivity comes from the Wasserstein geometry.
    This analogy has been somewhat explored in the mathematical field of information geometry \cite{AmariMatsuda, WIM, WCR_achieve} but the statistical implications of the theory have neither been developed nor appreciated so far.

    \begin{table}[t]
        \centering
        \resizebox{0.75\width}{!}{
        \begin{tabular}{|c|c|c|c|}
        \hline
        \thead{concept} & \thead{Hellinger geometry} & \thead{Wasserstein geometry} \\
        \hline
        \makecell{measure of instability} & \makecell{variance\\\,\\$\begin{aligned}\Var_{\theta}(T_n)=\Ebb_{\theta}\left[(T_n-\chi(\theta))(T_n-\chi(\theta))^{\top}\right]\end{aligned}$} & \makecell{\,\\sensitivity (Subsection~\ref{subsec:sensitivity})\\\,\\$\begin{aligned}\Sen_{\theta}(T_n)=\Ebb_{\theta}\left[\sum_{i=1}^{n}(D_{x_i}T_n)^{\top}D_{x_i}T_n\right]\end{aligned}$\\\,}  \\
        \hline
        \makecell{regularity condition\\ on statistical model\\(Remark~\ref{rem:DWS-v-DQM}, Remark~\ref{rem:DWS-v-DQM-exs})} & \makecell{\,\\differentiability in quadratic mean (DQM)\\\,\\$\begin{aligned}\int\left|\sqrt{(\diff P_{\theta+th}/\diff P_{\theta})(x)}-1-t(G_{\theta}(x))^{\top}h\right|\diff P_{\theta}(x) = o(t^2)\end{aligned}$\\\,} & \makecell{\,\\differentiability in Wasserstein sense (DWS) (Definition~\ref{def:DWS})\\\,\\$\begin{aligned}\int\|\transport_{P_{\theta}\to P_{\theta+th}}(x)-x-t\Phi_{\theta}(x)h\|^2\diff P_{\theta}(x) = o(t^2)\end{aligned}$\\\,} \\
        \hline
        \makecell{linearization of\\statistical model} & \makecell{\,\\log-likelihood linearization (score function)\\\,\\$\begin{aligned}G_{\theta}:\Rbb^d\to\Rbb^p\end{aligned}$\\\,} & \makecell{\,\\transport linearization (Definition~\ref{def:DWS})\\\,\\$\begin{aligned}\Phi_{\theta}:\Rbb^d\to\Rbb^{d\times p}\end{aligned}$\\\,}\\
        \hline
        \makecell{information matrix\\ (Remark~\ref{rem:inf-matrix-comparison})} & \makecell{\,\\Fisher information matrix\\\,\\$\begin{aligned}I(\theta) = \Ebb_{\theta}\left[(G_{\theta}(X))^{\top}G_{\theta}(X)\right]\end{aligned}$\\\,} & \makecell{\,\\Wasserstein information matrix (Definition~\ref{def:WIM})\\\,\\$\begin{aligned}J(\theta) = \Ebb_{\theta}\left[(\Phi_{\theta}(X))^{\top}\Phi_{\theta}(X)\right]\end{aligned}$\\\,} \\

        \hline
        \makecell{instability lower bound\\for unbiased estimators\\(Section~\ref{sec:bound})} & \makecell{\,\\Cram\'er-Rao lower bound\\\,\\$\begin{aligned}\Var_{\theta}(T_n) \succeq \frac{1}{n}(D\chi(\theta))^{\top}(I(\theta))^{-1}D\chi(\theta)\end{aligned}$\\\,} & \makecell{\,\\Wasserstein-Cram\'er-Rao lower bound (Theorem~\ref{thm:WCR})\\\,\\$\begin{aligned}\Sen_{\theta}(T_n) \succeq \frac{1}{n}(D\chi(\theta))^{\top}(J(\theta))^{-1}D\chi(\theta)\end{aligned}$\\\,} \\
        \hline
        \makecell{statistical models admitting\\exactly efficient estimators\\(Section~\ref{sec:exact}, Remark~\ref{rem:exp-v-transport})} & \makecell{\,\\exponential family\\\,\\$\begin{aligned}G_{\theta}(x) = D\eta(\theta)(\phi(x)-\chi(\theta))\end{aligned}$\\\,\\$\eta:\Theta\to\Rbb^k$\\ $\phi:\Rbb^d\to\Rbb^k$\\$\Ebb_{\theta}[\phi(X)] = \chi(\theta)$\\\,} & \makecell{\,\\transport family (Definition~\ref{def:W-fam}, Theorem~\ref{thm:efficiency-W-fam})\\\,\\$\begin{aligned}\Phi_{\theta}(x) = D\phi(x)(\Lambda(\theta))^{-1}(D\chi(\theta))^{\top}\end{aligned}$\\\,\\$\chi:\Theta\to\Rbb^k$\\ $\phi:\Rbb^d\to\Rbb^k$\\$\Lambda(\theta):=\Ebb_{\theta}\left[(D\phi(X))^{\top}D\phi(X)\right]$\\\,} \\
        \hline
        \makecell{general asymptotically\\efficiency estimator\\(Section~\ref{sec:asymp})} & \makecell{\,\\maximum likelihood estimator (MLE)\\\,\\$\begin{aligned}\underset{\theta\in\theta}{\arg\min}\frac{1}{n}\sum_{i=1}^{n}\log p_{\theta}(X_i)\end{aligned}$\\\,} & \makecell{\,\\Wasserstein projection estimator (WPE)\\(Definition~\ref{def:WPE}, Theorem~\ref{thm:WPE-efficient}, Theorem~\ref{thm:WPE-efficient-gen})\\\,\\$\begin{aligned}\underset{\theta\in\theta}{\arg\min}\,W_2^2\left(P_{\theta},\frac{1}{n}\sum_{i=1}^{n}\delta_{X_i}\right)\end{aligned}$\\\,} \\
        \hline
        \end{tabular}}
        \caption{Summary and comparison of statistical concepts in the Fisher-Rao and Wasserstein geometries.
        The statistical concepts for the Fisher-Rao geometry are classical and can be found in standard texts (e.g., \cite{Keener, vanDerVaart}). The statistical concepts for the Wasserstein geometry constitute the main results of the paper, and references for precise definitions, theorems, and discussions are given above.}
        \label{tab:statistics}
    \end{table}

    Our first set of results concerns question (i) on fundamental lower bounds for the sensitivity.
    Towards this end, we introduce (Definition~\ref{def:DWS}), the notion of \textit{differentiability in the Wasserstein sense (DWS)}, which states that the optimal transport maps between elements of a model $\Pcal := \{P_{\theta}: \theta\in\Theta\}$ vary smoothly in the parameter $\theta\in\Theta  \subseteq \mathbb{R}$. 
    Roughly speaking, a model is DWS when the \textit{transport linearization} function defined via
    \begin{equation}\label{eqn:W-score-pointwise}
        \Phi_{\theta}(x):= \lim_{t\to 0}\frac{\transport_{P_{\theta}\to P_{\theta + t}}(x)-x}{t}
    \end{equation}
    exists, where $\transport_{P_{\theta_0}\to P_{\theta_1}}$ denotes the optimal transport map from $P_{\theta_0}$ to $P_{\theta_1}$.
    Intuitively speaking, the DWS condition is the analog of the differentiability in quadratic mean (DQM) condition from classical theory, and the optimal transport map (minus identity) is the analog of the square root of the ratio of densities (minus one); see Table~\ref{tab:statistics}.
    In a DWS model, one can then define the \textit{Wasserstein information} (Definition~\ref{def:WIM}) via
    \begin{equation*}
        J(\theta) := \Ebb_{\theta}\left[\|\Phi_{\theta}(X)\|^2\right]
    \end{equation*}
    for $\theta\in\Theta$, where $X$ has distribution $P_{\theta}$.
    Thus, the Wasserstein information $J(\theta)$ is the analog of the usual Fisher information $I(\theta)$ in the classical theory.
    Finally, if $T_n$ is any sufficiently regular unbiased estimator of $\chi(\theta)$, where $\chi:\Theta\to\Rbb$ is some smooth function, then we prove (Theorem~\ref{thm:WCR}) a \textit{Wasserstein-Cram\'er-Rao lower bound} on the sensitivity which states that, under some mild technical conditions, we have
    \begin{equation*}
        \Sen_{\theta}(T_n) \ge \frac{(\chi'(\theta))^2}{n J(\theta)}
    \end{equation*}
    for all $n\in\Nbb$, which is of course the analog of the classical Cram\'er-Rao lower bound for the variance.
    
    Next we discuss question (ii) on models in which there exists an unbiased estimator exactly achieving the Wasserstein-Cram\'er-Rao lower bound.
    That is, let us say that an unbiased estimator $T_n$ of $\chi(\theta)$ is \textit{(exactly) sensitivity-efficient} if its sensitivity equals $n^{-1}(\chi'(\theta))^2(J(\theta))^{-1}$ for all $\theta\in\Theta$.
    At the same time, we define a \textit{transport family} (Definition~\ref{def:W-fam}) to be a model $\Pcal$ in which the transport linearization factors into 
    \begin{equation*}
        \Phi_{\theta}(x) = \frac{\chi(\theta)}{\Lambda(\theta)} \nabla\phi(x),
    \end{equation*}
    where $\chi:\Rbb\to\Rbb$ is some smooth map called the \textit{parameterization} (which later will turn out to be a natural estimand), $\phi:\Rbb^d\to\Rbb$ is some function, and $\Lambda(\theta):=\Ebb_{\theta}[\|\nabla\phi(X)\|^2]$.
    Then, we show (Theorem~\ref{thm:efficiency-W-fam}) that the existence of an unbiased sensitivity-efficient estimator of $\chi(\theta)$ in the model $\Pcal$ is equivalent to $\Pcal$ being a transport family with parameterization $\chi$.
    Thus, transport families are analogous to exponential families from the classical theory, which are precisely the models in which there exist an unbiased estimator which exactly achieves the Cram\'er-Rao lower bound for the variance.
    As in the classical case, transport families include many statistical models of interest, and this allows us to treat several concrete statistical examples. (See the end of Section~\ref{sec:exact} and Section~\ref{sec:asymp}.)

    Lastly, we consider question (iii) on constructing estimators which asymptotically achieve the Wasserstein-Cram\'er-Rao lower bound. Towards this end we introduce the notion of an \textit{asymptotically sensitivity-efficient} estimator (Definition~\ref{def:asymp-sens-eff}).
    Our main object of interest is the \textit{Wasserstein projection estimator (WPE)}, defined via
    \begin{equation*}
        T_n^{\WPE}(X_1,\ldots, X_n) := \underset{\theta\in\Theta}{\arg\min}\,W_2(P_{\theta},\bar P_n),
    \end{equation*}
    where $W_2$ denotes the Wasserstein metric, and $\bar P_n:=\frac{1}{n}\sum_{i=1}^{n}\delta_{X_i}$ denotes the empirical measure of $X_1,\ldots, X_n$.
    Under suitable regularity conditions, we show (Theorem~\ref{thm:WPE-efficient} and Theorem~\ref{thm:WPE-efficient-gen}) that the WPE is asymptotically sensitivity-efficient.
    Since the usual maximum likelihood estimator (MLE) can be written as a sort of projection of the empirical measure onto the model $\Pcal$ with respect to the Kullback-Leibler (KL) divergence, we see that the WPE is analogous to the MLE from the classical theory.
    This result can be proven directly in the case $d=1$ because of the explicit representation of optimal transport maps via quantile functions, but the case $d\ge 1$ requires several technical assumptions that may be difficult to verify; fundamentally, the difficulty is that the current literature on statistical semi-discrete optimal transport does not have a sufficiently strong understanding of
    \begin{itemize}
        \item consistency for some high-order mixed partial derivatives of the optimal potentials, and
        \item statistical estimates on the ellipticity of the Laguerre cells in a random power diagram.
    \end{itemize}
    Nonetheless, we believe that our result provides a basic foundation for the statistical usage of the WPE, and we hope that it will motivate some further work on statistical optimal transport.

    Extensions to a multidimensional parameter $\Theta\subseteq\Rbb^p$ and multidimensional estimand $\chi:\Theta\to\Rbb^k$ are straightforward, and our results cover the general case of $p\ge1$ and $k\ge 1$, although the notation is a bit more cumbersome.
    In the general setting, a statistical model admits a notion of \textit{Wasserstein information matrix} which is analogous to the Fisher information matrix, and an estimator admits a notion of \textit{cosensitivity matrix} which is analogous to the covariance matrix.
    We direct the reader to the main body of the paper (Section~\ref{subsec:sensitivity} and Section~\ref{sec:bound}) for full details.

    In addition to developing our general theory of sensitivity throughout the paper, we give detailed analyses of various statistical problems of interest.
    In many cases, this reveals new optimality properties for well-known estimators, e.g., the sample mean achieves the optimal sensitivity in any DWS location family (Example~\ref{ex:loc-fam}), the sample second moment achieves the optimal sensitivity in any DWS scale family (Example~\ref{ex:var-fam}), the ordinary least squares (OLS) estimator achieves the optimal sensitivity in any DWS linear regression model with centered errors (Example~\ref{ex:regression}), and the sample variance achieves the asymptotically optimal sensitivity in a Gaussian model with unknown mean where the estimand is the variance (Example~\ref{ex:var-unknown-mean}).
    In other cases, this theory reveals estimators with optimal sensitivity that appear to be novel, e.g., an estimator with asymptotically optimal sensitivity in the uniform scale family (Example~\ref{ex:Unif-scale}) is given as follows 
    \begin{equation*}
        T_n(X_1,\ldots, X_n) = \frac{3}{2n^2}\sum_{i=1}^{n}(2i-1)X_{(i)},
    \end{equation*}
    where $X_{(1)},\ldots, X_{(n)}$ denote the order statistics of $X_1,\ldots, X_n$ (Example~\ref{ex:scale-fam}). Note that this estimator falls within the class of $L$-statistics, described in \cite[Chapter~22]{vanDerVaart}.
        
    \subsection{Related Literature}
    
    All of our results on fundamental limits for sensitivity aim to be analogous to classical results on fundamental limits for variance; as such, we do not attempt to summarize this literature, but we assume the reader has familiarity with standard sources, e.g., \cite{vanDerVaart, Keener, EfronExpFam}.
    For introduction to the mathematical discipline of information geometry, which provides the geometric interpretation of the Cram\'er-Rao bound in terms of the Fisher-Rao geometry, see \cite{Amari}.

    The Wasserstein-Cram\'er-Rao bound and some related considerations have been previously studied in the mathematical field of information geometry.
    In particular, the works \cite{WIM,AmariMatsuda, WCR_achieve} introduced the notion of the Wasserstein information matrix, proved the Wasserstein-Cram\'er-Rao bound in some settings, and provided many calculations of Wasserstein information matrices for some concrete models of interest; our notion of sensitivity is called \textit{Wasserstein variance} in these works, and some connections are drawn to parallel work in non-equilibrium statistical mechanics \cite{ItoNonEquilibriumI,ItoNonEquilibriumII}.
    Some other works connecting information geometry to Wasserstein geometry include \cite{OT_IG, Ito, Ay}.

    We emphasize that, while some basic aspects of this Wasserstein-Cram\'er-Rao theory have indeed been studied in \cite{WIM,AmariMatsuda, WCR_achieve}, there are several senses in which our results represent a departure from this prior literature.
    First of all, our notion of DWS and our careful analytic results (e.g., Proposition~\ref{prop:ContinuityEqnLocallyLipschitz}) allow one to rigorously discuss the theory of sensitivity for estimators which need not be smooth and compactly supported; indeed, in virtually no statistical applications is an estimator regular enough to directly act as a test function in the continuity equation, although this has been assumed in all previous works.
    Second, our results on transport families allow one to directly determine estimators with optimal sensitivity in a wide variety of statistical settings, which is essential for a meaningful comparison of the classical Cram\'er-Rao theory with the Wasserstein-Cram\'er-Rao theory.
    Last, our results on Wasserstein projection estimators resolve the open questions of \cite[Section~5]{WIM} which ask how to construct (in our terminology) asymptotically sensitivity-efficient estimators in a general-purpose way.

    The notion of Wasserstein projection estimator (WPE) has been previously studied in several contexts.
    For example, the WPE is a particular example of a minimum-distance estimator (a general class of estimators about which much is known \cite{BasuMinDist}), leading to the works \cite{Bernton, Bassetti, Belili}, which establish existence and measurability of solutions, consistency, a central limit theorem, and more.
    Another example is that, in machine learning, the WPE has been studied and applied in various problems involving interpretability and fairness \cite{QuantileWP, PerturbWP, curseWP}.
    On the computational side, the work \cite{Bernton} discusses some difficulties for implementing the WPE, which can be somewhat delicate.
    Despite this existing literature, we are not aware of any literature computing the sensitivity of WPEs nor connecting WPE to  the Wasserstein-Cram\'er-Rao bound.

    While our notion of sensitivity shares some broad similarities with the field of robust statistics, we emphasize that these theories are different in a few important ways; this is highlighted by the example of the sample median, which has good robustness (e.g., its breakdown point is $\sfrac{1}{2}$) but bad sensitivity (e.g., its sensitivity is non-vanishing).
    In more detail, we emphasize that the objects of interest in classical robust statistics (e.g., influence functions, breakdown point \cite{HuberRonchetti}) typically come from ``contamination'' models in which some \textit{proportion} of the samples (which may be large, small, or infinitesimal) may be \textit{arbitrarily} altered; in contrast, our notion of sensitivity concerns models in which \textit{all} of the samples are altered, but each one \textit{infinitesimally}.
    In another direction, the conclusions of our results are plainly different from the conclusions of standard results in robust statistics, which typically study optimality through the lens of asymptotic variance while constraining a specified class of estimators to have sufficient robustness (e.g., \cite[Section~9.8]{Keener}); in contrast, our results understand optimality by directly minimizing the sensitivity (either exactly or asymptotically), and among a class of essentially arbitrary estimators.

    We also include a brief discussion of other fields to which our notion of sensitivity is closely related:
    \begin{itemize}
        \item \textit{Local Differential Privacy (LDP).} In settings where no centralized statistician can be trusted to ``curate'' private data, the framework of LDP suggests carefully randomizing each individual's data before processing; the many recent developments on this methodology \cite{LearnPrivately,DuchiJordanWainwrightI,DuchiJordanWainwrightII} can be seen as extensions of the classical strategy of randomized response \cite{Warner} in survey design.
        Our notion of sensitivity exactly describes the fundamental limit for how closely an estimator, when computed on randomized data, approximates the estimator computed on the unobserved private data.
        Strictly speaking our theory applies only to mechanisms in LDP that utilize carefully-calibrated additive noise, while there are other mechanisms of interest that are more complex.
        \item \textit{Distributionally Robust Optimization (DRO).}
        A recent paradigm in stochastic optimization \cite{MohajerinEsfahaniKuhn, BlanchetMurthy, GaoKleywegt} aims to minimize some statistical objective, in the worst-case regime where the empirical measure of the data may be replaced with any distribution in an ``ambiguity set'' taken to be Wasserstein balls of small radius; in this setting and under some additional assumptions, it can be shown that we have
        \begin{equation*}
            \max_{W_2(P_{\theta},\tilde P)\le \varepsilon}\Ebb_{\tilde P}[|T_n(X_1,\ldots, X_n)-\chi(\theta)|] = \Ebb_{\theta}\left[|T_n(X_1,\ldots, X_n)-\chi(\theta)|\right] + \varepsilon\,\sqrt{\Sen_{\theta}(T_n)} + o(\varepsilon)
        \end{equation*}
        as $\varepsilon\to 0$ for each $\theta\in\Theta$ (see, e.g.,  \cite[Theorem 1.9]{GarciaTrillos2022}). In other words,  the sensitivity exactly characterizes the lower-order growth of the DRO risk in terms of the size of the ambiguity set. While this discussion applies only to models of DRO in which ambiguity sets are balls in the Wasserstein metric, there are settings in which it is of interest to use ambiguity sets with other structures \cite{DRO_robust_stat}.
    \end{itemize}
    More generally, we believe that our theory of sensitivity will find applications in many other statistical settings where data are perturbed before performing some downstream analysis.
    In addition to standard models where small amounts of noise arise due to measurement error (e.g., errors-in-variables models \cite{CarollStefanksi, FFKMC}), there are several other methodological reasons for practitioners to deliberately add noise to their data sets, typically in order to achieve numerical stability (e.g., in density estimation \cite{Jittering}) or robustness (e.g., neural network training \cite{NoiseInjection}, simulation-extrapolation \cite{SimulationExtrapolation}).

    \subsection*{Organization}

    In Section~\ref{sec:prelim} we review some background on Riemannian geometry, and on the Hellinger, Fisher-Rao, and Wasserstein geometries, and in Section~\ref{subsec:sensitivity} we detail the notion of sensitivity.
    Section~\ref{sec:bound} discusses the Wasserstein-Cram\'er-Rao lower bound and the requisite regularity of statistical models. Section~\ref{sec:exact} discusses the models in which exact sensitivity-efficient estimation is possible and addresses several examples of interest, and Section~\ref{sec:asymp} discusses the asymptotic sensitivity-efficiency of Wasserstein projection estimators.
    The proofs of all results are contained in Appendix~\ref{app:proofs}, and the other appendices contain some further technical discussions of interest.
    
    \subsection*{Notation}

    For a function $f:\Rbb\to\Rbb$, we use the standard notation $f':\Rbb\to\Rbb$ for its derivative.
    For a function $F:\Rbb^a\to\Rbb^b$, its Jacobian is written $DF$ and we take the convention that $DF(x)\in\Rbb^{a\times b}$.
    Thus, for any $x,h\in \Rbb^a$, we have
    \begin{equation*}
        \lim_{t\to 0}\frac{F(x+th)-F(x)}{t} = (DF(x))^{\top}h.
    \end{equation*}
    This convention is chosen so that when $F$ is scalar-valued (i.e., $b=1$) its Jacobian is a column vector, in line with standard practice; in this case we write $\nabla F$ and call it the gradient of $F$.
    If $F$ is a function of multiple inputs, say $F:(\Rbb^a)^n\to \Rbb^b$, then we write $D_{x_i}F$ to denote the Jacobian with respect to the argument $x_i$ for $1\le i\le n$, meaning
    \begin{equation*}
        \lim_{t\to 0}\frac{F(x_1,\ldots, x_{i-1},x_i+th,x_{i+1},\ldots,x_n)-F(x_1,\ldots, x_n)}{t} = (D_{x_i}F(x_1,\ldots, x_n))^{\top}h;
    \end{equation*}
    and similar for the gradient $\nabla_{x_i}T_n$ when $b=1$.

    We write $\Pcal(\Rbb^m)$ for the collection of all Borel probability measure on $\Rbb^m$.
    Then, we write $\Pcal_2(\Rbb^m)$ for the collection of all $\mu\in\Pcal(\Rbb^m)$ with finite variance, meaning $\int_{\Rbb^m}\|x\|^2\diff\mu(x)<\infty$.
    A probability measure $\mu\in\Rbb^m$ is called \textit{absolutely continuous} if it has a density with respect to the $m$-dimensional Lebesgue measure on $\Rbb^m$, and we denote by $\mathcal{P}_{2,\mathrm{ac}} (\Rbb^m)$ the space of all absolutely continuous Borel probability measures with finite variance. 
    We also write $L^2_{\mu}(\Rbb^m;\Rbb)$ for the Hilbert space of square integrable functions with respect to $\mu$ from $\Rbb^m$ to $\Rbb$, as well as $L^2_{\mu}(\Rbb^m;\Rbb^m)$ for the Hilbert space of square integrable functions from $\Rbb^m$ to $\Rbb^m$ with respect to $\mu$.
    We also write $C_c^{k}(\Rbb^m)$ for the space of all compactly-supported  $k$-times continuously differentiable functions from $\Rbb^m$ to $\Rbb$; similarly, we write $C_c^{\infty}(\Rbb^m)$ for the space of all compactly-supported  infinitely differentiable functions from $\Rbb^m$ to $\Rbb$. 
    When a function is said to have a \textit{local} property (e.g., locally Lipschitz, locally integrable), we mean each point is contained in some neighborhood on which the function satisfies said property.
    By the norm $\|A\|$ of a matrix $A$ we mean the Frobenius norm.
        
    \section{Geometric Background}\label{sec:prelim}

    In this section we precisely introduce some fundamental geometric notions that will appear in our main results.
    Throughout, $m\in\Nbb$ denotes a fixed dimension.

    \subsection{Riemannian Geometry}\label{subsec:Riemann}

    In this subsection we give a basic summary of Riemannian geometry, as our later work will rely on the Riemannian-like structures of the Hellinger, Fisher-Rao, and Wasserstein geometries on suitable spaces of measures and probability measures.
    We direct the reader to standard texts (e.g., \cite{LeeSmooth,LeeRiemannian}) for further information.
    Note that, a Riemannian manifold is, strictly speaking, a pairing $(\Mcal,g)$ of a set $\Mcal$ and a ``metric tensor'' $g$; however, we will build up this structure piece by piece, in order to make clear our later analogy with the spaces of measures.

    First of all, a Riemannian manifold can be viewed as a metric space $(\Mcal,d)$.
    For $x,y\in\Mcal$, a \textit{constant-speed geodesic}
    from $x$ to $y$ is a path $\gamma:[0,1]\to \Mcal$ satisfying $d(\gamma(s),\gamma(t)) = (t-s)d(\gamma(0),\gamma(1))$ for all $0\le s \le t\le 1$; in other words, $\gamma$ is, up to constants, an isometric embedding of the line segment $[0,1]$ into the manifold $\Mcal$.

    Each point $x\in\Mcal$ has a \textit{tangent space} denoted by $\Tan_x(\Mcal)$ which represents the best possible linear approximation of the non-linear space $\Mcal$, locally at the point $x$.
    More concretely, $\Tan_x(\Mcal)$ is a vector space endowed with an inner product $g_x$; the collection of tangent spaces $\{\Tan_x(\Mcal)\}_{x\in\Mcal}$ is called the \textit{tangent bundle} and the collection of inner products $\{g_x\}_{x\in\Mcal}$ is called the \textit{metric tensor}.
    Each constant-speed geodesic $\gamma:[0,1]\to\Mcal$ has a notion of an initial velocity vector $\gamma'(0)$, and we have $\gamma'(0)\in\Tan_{\gamma(0)}(\Mcal)$; conversely, $\Tan_x(\Mcal)$ equals (the closure of the linear span of) the set of initial velocities $\gamma'(0)$ where $\gamma$ ranges over all constant-speed geodesics $\gamma:[0,1]\to\Mcal$ with $\gamma(0) = x$.
    In other words, $\Tan_x(\Mcal)$ is also the space of ``directions'' which constant-speed geodesics originating at $x$ may take. 

    For each $x\in\Mcal$, there is a fundamental correspondence between $\Mcal$ and $\Tan_x(\Mcal)$.
    Indeed, the \textit{logarithmic map} $\textrm{Log}_x:\Mcal\to\Tan_{x}(\Mcal)$ is defined so that $\textrm{Log}_x(y)=\gamma'(0)$ where $\gamma:[0,1]\to \Mcal$ is the unique constant-speed geodesic from $x$ to $y$, if it exists.
    The \textit{exponential map} $\textrm{Exp}:\Tan_{x}(\Mcal)\to \Mcal$ is the inverse of the logarithmic map, wherever it exists.

    Lastly, all these structures are required to be compatible in the sense that for all $x,y\in\Mcal$ we have
    \begin{equation*}
        d^2(x,y) := \min\left\{\int_{0}^{1}\|\gamma'(t)\|^2_{\gamma(t)}\diff t: \gamma \textnormal{ is a constant-speed geodesic from $x$ to $y$}\right\}
    \end{equation*}
    where $\|\cdot\|_x$ is the norm on $\Tan_{x}(\Mcal)$ induced by the inner product $g_x$; in other words, $d(x,y)$ represents the length of the shortest path from $x$ to $y$, where the ``length'' of a path can be measured by averaging the size of the tangent vector at each point in time with respect to the metric tensor at the current location.

    \subsection{Hellinger and Fisher-Rao Geometries}\label{subsec:hellinger}

    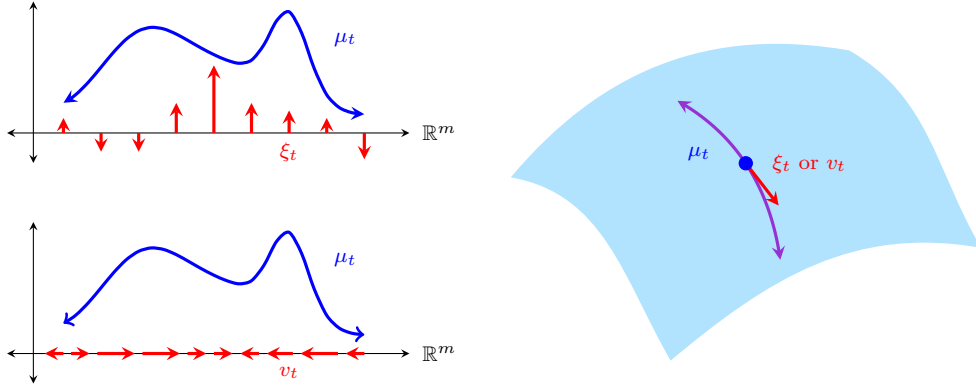
\begin{figure}[t]
        \centering
        \begin{minipage}{0.4\linewidth}
        \begin{tikzpicture}[scale=0.5]
		
		\draw[thin,stealth-stealth] (-0.8,-0.8) -- (-0.8,3.5);
		\draw[thin,stealth-stealth] (-1.5,0) -- (9.2,0);
		\node[color=black] at (10,0) {$\mathbb{R}^m$};
		
		\draw[very thick, color=blue, domain=0:8, smooth,stealth-stealth] plot (\x,{0.5+1.5*exp(-0.4*(\x-2)^2)+2*exp(-2*(\x-6)^2)+0.005*\x^2*(8-\x)^2});
		\node[blue] at (7.5,2.5) {$\mu_t$};
		

		\draw[very thick, color=red,-stealth] (1,0) -- ++(0,-0.5);
		\draw[very thick, color=red,-stealth] (2,0) -- ++(0,-0.5);
		\draw[very thick, color=red,-stealth] (8,0) -- ++(0,-0.7);
		
		\draw[very thick, color=red,-stealth] (-0,0) -- ++(0,0.4);
		\draw[very thick, color=red,-stealth] (3,0) -- ++(0,0.8);
		\draw[very thick, color=red,-stealth] (4,0) -- ++(0,1.8);
		\draw[very thick, color=red,-stealth] (5,0) -- ++(0,0.8);
		\draw[very thick, color=red,-stealth] (6,0) -- ++(0,0.6);
		\draw[very thick, color=red,-stealth] (7,0) -- ++(0,0.4);
		
		\node[color=red] at (6,-0.5) {$\xi_t$};
		
    \end{tikzpicture}~\\~\\
    
    \begin{tikzpicture}[scale=0.5]
        
		\draw[thin,stealth-stealth] (-0.8,-0.8) -- (-0.8,3.5);
		\draw[thin,stealth-stealth] (-1.5,0) -- (9.2,0);
		\node[color=black] at (10,0) {$\mathbb{R}^m$};
		
		\draw[very thick, color=blue, domain=0:8, smooth,<->] plot (\x,{0.5+1.5*exp(-0.4*(\x-2)^2)+2*exp(-2*(\x-6)^2)+0.005*\x^2*(8-\x)^2});
		\node[blue] at (7.5,2.5) {$\mu_t$};
		
		\draw[very thick, color=red,-stealth] (0,0) -- ++(-0.5,0);
		\draw[very thick, color=red,-stealth] (0.2,0) -- ++(0.5,0);
		\draw[very thick, color=red,-stealth] (0.9,0) -- ++(1,0);
		\draw[very thick, color=red,-stealth] (2.1,0) -- ++(1,0);
		\draw[very thick, color=red,-stealth] (3.3,0) -- ++(0.5,0);
		\draw[very thick, color=red,-stealth] (4,0) -- ++(0.5,0);
		\draw[very thick, color=red,-stealth] (5.2,0) -- ++(-0.5,0);
		\draw[very thick, color=red,-stealth] (6.1,0) -- ++(-0.7,0);
		\draw[very thick, color=red,-stealth] (7.3,0) -- ++(-1,0);
		\draw[very thick, color=red,-stealth] (8,0) -- ++(-0.5,0);
		
		\node[color=red] at (6,-0.5) {$v_t$};
		
	\end{tikzpicture} 
        \end{minipage}
        \begin{minipage}{0.4\linewidth}
        \begin{tikzpicture}[ scale=1.25]			


			\coordinate (A) at (6, 0.1);
			\coordinate (B) at (2.7, -1.1);
			\coordinate (C) at (1, 0.85);
			\coordinate (D) at (4.6, 2.2);
			\fill[thin, opacity=0.5, color=myblue] (A) to[out=170,in=40] (B) to[out=120,in=350] (C) to[out=50,in=170] (D) to[out=330,in=120] (A);
			
			\draw[very thick, color=mypurple, domain=2.25:4.25, smooth,stealth-stealth,cm={cos(-130) ,-sin(-130) ,sin(-130) ,cos(-130) ,(5.6125,-1.5)}] plot (\x,{0.25*(\x-3.5)^2});

			\draw[very thick, color=red,-stealth] (3.5, 1) -- ++(0.35, -0.45);
			
			\filldraw[color=blue] (3.5, 1) circle (2pt);
			\node[blue, right] at (2.8, 1.1) {$\mu_t$};

			\node[red, right] at (3.7, 1.) {$\xi_t$ or $v_t$};
			
		\end{tikzpicture}
        \end{minipage}
        \caption{A probability measure and its infinitesimal flow in the Fisher-Rao and Wasserstein geometries.
        For a probability measure $\mu_t$, the Fisher-Rao flow is determined by a mean-zero scalar field $\xi_t:\Rbb^m\to\Rbb$ (top left) and the Wasserstein flow is determined by a suitable vector field $v_t:\Rbb^m\to\Rbb^m$ (bottom left).
        In either case, the scalar and vector fields are the tangent vectors for the path $\{\mu_t\}_{t}$ in the space of probability measures (right).}
        \label{fig:FR-v-W}
    \end{figure}

        \begin{table}[t]
        \centering
        \resizebox{0.75\width}{!}{
        \begin{tabular}{|c|c|c|}
        \hline
        \thead{concept} & \thead{Hellinger geometry} & \thead{Wasserstein geometry} \\
        \hline
        \makecell{\,\\variation in the\\space of measures\\\,} & re-weighting & perturbation \\
        \hline
        \makecell{metric}& \makecell{\,\\$\begin{aligned}H^2(\mu,\nu) = \int_{\Rbb^m}\left(\sqrt{\frac{\diff \mu}{\diff \lambda}}-\sqrt{\frac{\diff \nu}{\diff \lambda}}\right)^2\diff \lambda\end{aligned}$\\\,} & \makecell{\,\\$\begin{aligned}W_2^2(\mu,\nu) = \underset{\pi\in\Pi(\mu,\nu)}{\min}\int_{\Rbb^m\times\Rbb^m}\|x-y\|^2\diff \pi(x,y)\end{aligned}$\\\,}\\
        \hline
        \makecell{representation of paths} & 
        \makecell{\,\\reaction equation\\\,\\$\begin{aligned}\partial_t\mu_t =\xi_t\mu_t\end{aligned}$\\\,\\
	$\begin{aligned}\bigg(\frac{\diff}{\diff t}\int_{\Rbb^m}f \diff \mu_t = \int_{\Rbb^m}f\,\xi_t\diff \mu_t \textnormal{ for } f \in C_c^\infty (\Rbb^m)\bigg)\end{aligned}$\\\,}
        
        & \makecell{\,\\continuity equation\\\,\\$\begin{aligned}\partial_t\mu_t +\textnormal{div}(v_t\mu_t)=0\end{aligned}$\\\,\\
	$\begin{aligned}\bigg(\frac{\diff}{\diff t}\int_{\Rbb^m}f \diff \mu_t = \int_{\Rbb^m}\nabla f\cdot v_t\diff \mu_t \textnormal{ for } f \in C_c^\infty (\Rbb^m)\bigg)\end{aligned}$\\\,} \\
        \hline
        \makecell{dynamical\\formulation}& \makecell{\,\\$\begin{aligned}H^2(\mu,\nu) = \frac{1}{4}\min\left\{\int_{0}^{1}\|\xi_t\|^2_{L^2_{\mu_t}}\diff t: \begin{matrix}
            \partial_t\mu_t = \xi_t\mu_t, \\
            \mu_0 = \mu, \\
            \mu_1=\nu
        \end{matrix}
        \right\}\end{aligned}$\\\,} & \makecell{\,\\$\begin{aligned}W_2^2(\mu,\nu) = \min\left\{\int_{0}^{1}\|v_t\|^2_{L^2_{\mu_t}}\diff t: \begin{matrix}
            \partial_t\mu_t + \textnormal{div}(v_t\mu_t) = 0, \\
            \mu_0 = \mu, \\
            \mu_1=\nu
        \end{matrix}
        \right\}\end{aligned}$\\\,} \\
        \hline
        \makecell{\,\\constant-speed\\geodesics\\\,} & \makecell{\,\\$\gamma(t) = \mu_t$, where $\begin{aligned}\frac{\diff \mu_t}{\diff \lambda} = \left( (1-t) \sqrt{\frac{\diff \mu}{\diff \lambda}}  + t  \sqrt{\frac{\diff \nu}{\diff \lambda}}  \right)^2\end{aligned}$\\\,} & \makecell{\,\\$\begin{aligned}\gamma(t) = ((1-t)\id + t\,\transport_{\mu\to\nu})_{\#}\mu\end{aligned}$\\\,} \\
        \hline
        \makecell{\,\\tangent space\\\,} & $\begin{aligned}\Tan_{\mu}^{H}(\Pcal(\Rbb^m)) \subseteq L^2_{\mu}(\Rbb^m;\Rbb)\end{aligned}$ & $\begin{aligned}\Tan_{\mu}^{W}(\Pcal_2(\Rbb^m)) \subseteq L^2_{\mu}(\Rbb^m;\Rbb^m)\end{aligned}$ \\
        \hline
        \makecell{\,\\logarithmic map\\\,} & $\begin{aligned}\textnormal{Log}^{H}_{\mu}(\nu) = \sqrt{\frac{\diff \nu}{\diff \mu}} - 1\end{aligned}$ & $\begin{aligned}\textnormal{Log}^{W}_{\mu}(\nu) = \transport_{\mu\to\nu}-\id\end{aligned}$ \\
        \hline
        \end{tabular}}
        \caption{Summary and comparison of geometric concepts of the Fisher-Rao and Wasserstein geometries, as discussed in Subsection~\ref{subsec:hellinger} and Subsection~\ref{subsec:wasserstein}, respectively.}
        \label{tab:geometry}
    \end{table}

To facilitate the comparison in later sections between the theory for sensitivity developed in this paper and the classical theory for variance, it will be beneficial to present some basic background on the Hellinger and Fisher-Rao geometries, emphasizing their formal Riemannian structures; we direct the reader to \cite{Background_Hellinger} for further information.
We encourage the reader to consult Figure~\ref{fig:FR-v-W} and Table~\ref{tab:geometry} while reading this subsection and the next.

\subsubsection{Hellinger Geometry}

We begin by introducing the Hellinger metric on the space of finite non-negative Borel measures (not necessarily probability measures), denoted $\mathcal{M}_{+}(\Rbb^m)$. For $\mu,\nu\in\mathcal M_{+}(\mathbb R^{m})$ and any $\sigma$-finite measure
$\lambda$ dominating both (i.e., $\mu,\nu\ll\lambda$), define the \emph{squared Hellinger distance} as
\[ H^2(\mu, \nu) := \int_{\Rbb^m} \left( \sqrt{\frac{\diff \mu}{\diff \lambda}} - \sqrt{\frac{\diff \nu}{\diff \lambda}} \right)^2 \diff \lambda.   \]
It can be easily checked that the value $H^2(\mu,\nu)$ is independent of the choice of $\lambda$.

Next, we introduce a dynamical formulation of the Hellinger distance.
We say that a path $\{\mu_t\}_{0\le t\le 1}$ in $\mathcal{M}_+(\Rbb^m)$ and a time-varying scalar field $\{\xi_t\}_{0\le t\le 1}$ \textit{satisfy the reaction equation} if for all $f\in C_c^{\infty}(\Rbb^m)$ we have
\begin{equation}
    \frac{\diff}{\diff t}\int_{\Rbb^m}f(x)\diff \mu_t(x) = \int_{\Rbb^m}f(x)\xi_t(x)\diff\mu_t(x).
\end{equation}
In other words, $(\{\mu_t\}_{0\le t\le 1},\{\xi_t\}_{0\le t\le 1})$ is a solution, in the weak sense, of the ordinary differential equation (ODE)
\begin{equation}\label{eqn:ReactEq}
    \partial_t\mu_t = \xi_t\mu_t.
\end{equation}
Intuitively speaking, a pair $(\{\mu_t\}_{0\le t\le 1},\{\xi_t\}_{0\le t\le 1})$ satisfying the reaction equation means that, at each location $x\in\Rbb^m$ and each time $0\le t\le 1$, the change in the log-density $\log \mu_t(x)$ equals $\xi_t(x)$.
It turns out that the squared Hellinger distance between $\mu,\nu \in \mathcal{M}_+(\Rbb^m)$ can be written in the following form (see \cite{Background_Hellinger} and references therein):
\begin{equation}
   H^2(\mu, \nu) = \frac{1}{4}\min \left\{ \int_0^1 \lVert \xi_t \rVert^2_{L^2_{\mu_t}(\Rbb^m;\Rbb)} \diff t :(\{\mu_t\}_{0\le t\le 1},\{\xi_t\}_{0\le t\le 1}) \textnormal{ solves } \eqref{eqn:ReactEq}, \mu_0=\mu\textnormal{, } \mu_1 = \nu \right\}.
   \label{eqn:HellingerRiemannian}
\end{equation}
Plainly,~\eqref{eqn:HellingerRiemannian} states that the value $H^2$ equals the smallest possible ``size'' of a time-varying scalar field which continually re-weights $\mu$ to $\nu$.

Next, we define the constant-speed geodesics in the metric space $(\mathcal{M}_{+}(\Rbb^m), H)$, which are closely related to minimizers of the dynamical formulation~\eqref{eqn:HellingerRiemannian}.
A direct computation reveals that, if $\lambda$ is a $\sigma$-finite measure with respect to which both $\mu,\nu$ are absolutely continuous, then the path $\gamma:[0,1]\to \mathcal{M}_{+}(\Rbb^m)$ defined via
\begin{equation}\label{eqn:HellingerGeodesic}
    \gamma(t):= \mu_t, \qquad\textnormal{where}\qquad\frac{\diff \mu_t}{\diff \lambda} = \left( (1-t) \sqrt{\frac{\diff \mu}{\diff \lambda}}  + t  \sqrt{\frac{\diff \nu}{\diff \lambda}}  \right)^2,  
\end{equation}
is a constant-speed geodesic in the space $\MM_+(\Rbb^m)$, and in fact a minimizer of \eqref{eqn:HellingerRiemannian}; see \cite[Example~2.3]{Background_Hellinger}.

Finally, we discuss how these considerations allow us to endow $(\mathcal{M}_{+}(\Rbb^m), H)$ with a formal Riemannian structure.
By comparing the considerations in Subsection~\ref{subsec:Riemann} with the dynamical formulation~\eqref{eqn:HellingerRiemannian}, we should take the tangent space at each $\mu\in\Mcal_{+}(\Rbb^m)$ to be a subspace of $L^2_\mu(\Rbb^m;\Rbb)$; since \eqref{eqn:HellingerGeodesic} shows that any function in $L^2_{\mu}(\Rbb^m;\Rbb)$ is the derivative of some constant-speed geodesic, we must in fact take
 \begin{equation*}
        \Tan_{\mu}^{H}(\Mcal_{+}(\Rbb^m))=L^2_{\mu}(\Rbb^m;\Rbb).
    \end{equation*}
    We also define the logarithmic map as the initial speed of each constant-speed geodesic, which by \eqref{eqn:HellingerGeodesic} is the map $\mathrm{Log}_{\mu}^{H}:\Pcal_2(\Rbb^m)\to\Tan_{\mu}^{H}(\Pcal(\Rbb^m))$ given by
    \begin{equation}
    \label{eqn:LogMapHellinger}
        \mathrm{Log}^{H}_{\mu}(\nu):= \frac{\sqrt{\frac{\diff \nu}{\diff \lambda}}}{\sqrt{\frac{\diff \mu}{\diff \lambda}}} -1
    \end{equation}
    where $\lambda$ is a common reference measure for $\mu$ and $\nu$; if $\nu$ is absolutely continuous with respect to $\mu$, then we also have $\mathrm{Log}^{H}_{\mu}(\nu) = \sqrt{\diff \nu/\diff \mu}-1$.
    Here we use the superscript ``$H$'' for ``Hellinger'' in order to distinguish these concepts from the analogous concepts in the Wasserstein geometry.

\subsubsection{Fisher-Rao Geometry}

We next describe the Fisher-Rao geometry, which can be viewed as the geometry induced on the space of probability measures $\Pcal(\Rbb^m)$ by the Hellinger geometry on the space of finite nonnegative measures $\mathcal{M}_{+}(\Rbb^m)$.
We emphasize that the Fisher-Rao geometry is not simply the metric space resulting from restricting the metric from $\mathcal{M}_{+}(\Rbb^m)$ to $\Pcal(\Rbb^m)$; rather, it is the metric space resulting from restricting the Riemannian structure from $\mathcal{M}_{+}(\Rbb^m)$ to $\Pcal(\Rbb^m)$.

The construction is most easily understood via a dynamical formulation.
That is, for $\mu,\nu\in\Pcal(\Rbb^m)$, consider solutions $(\{\mu_t\}_{0\le t\le 1},\{\xi_t\}_{0\le t\le 1})$ to the reaction equation \eqref{eqn:ReactEq} with the property that $\mu_t$ is a probability measure for all $0\le t\le 1$;
it can be shown that this holds if and only if $\int_{\Rbb^m}\xi_t\diff \mu_t = 0$ for all $0\le t\le 1$.
In other words, we may define the \textit{squared Fisher-Rao distance} between $\mu,\nu\in\Pcal(\Rbb^m)$ as
\begin{equation}
    \mathrm{FR}^2(\mu, \nu):= \frac{1}{4}\min \left\{ \int_0^1 \lVert \xi_t \rVert^2_{L^2_{\mu_t}(\Rbb^m;\Rbb)} \diff t :\begin{matrix}
        (\{\mu_t\}_{0\le t\le 1},\{\xi_t\}_{0\le t\le 1}) \textnormal{ solves } \eqref{eqn:ReactEq}, \\
        \mu_0=\mu, \mu_1 = \nu, \textnormal{ and for } 0 \le t\le 1 \\
        \textnormal{we have}\int_{\Rbb^m}\xi_t\diff \mu_t = 0
    \end{matrix}\right\}.
   \label{eqn:FRRiemannian}
\end{equation}
It turns out that this can be written explicitly in terms of the Hellinger distance, as
\begin{equation}\label{eqn:FR-from-Helliner}
    \mathrm{FR}(\mu, \nu) = \cos^{-1}\left(1-\frac{1}{2}H^2(\mu,\nu)\right)
\end{equation}
for all $\mu,\nu\in \Pcal(\Rbb^m)$.
Geometrically speaking,~\eqref{eqn:FR-from-Helliner} allows us to interpret the Fisher-Rao and Hellinger distances between probability measures $\mu,\nu\in\Pcal(\Rbb^m)$ in terms of the unit sphere $S_\lambda:=\{f\in L^2_{\lambda}(\Rbb^m;\Rbb): \|f\|_{L^2_{\lambda}(\Rbb^m;\Rbb)} = 1\}$, where $\lambda$ is any $\sigma$-finite measure with respect to which both $\mu$ and $\nu$ are absolutely continuous: the Hellinger distance is the length of the \textit{chord} connecting $\sqrt{\diff \mu/\diff \lambda}$ to $\sqrt{\diff \nu/\diff \lambda}$ and the Fisher-Rao distance is the length of the \textit{arc} connecting $\sqrt{\diff \mu/\diff \lambda}$ to $\sqrt{\diff \nu/\diff \lambda}$.

The metric space $(\Pcal(\Rbb^m),\textnormal{FR})$ also admits a formal Riemannian structure, using the same interpretations as before.
The only difference between the Hellinger and Fisher-Rao cases is that the tangent space $\Tan_{\mu}^{\textnormal{FR}}(\Pcal(\Rbb^m))$ at $\mu$ requires its elements to be centered, meaning
\begin{equation*}
    \Tan_{\mu}^{\textnormal{FR}}(\Pcal(\Rbb^m))=\left\{\xi\in L^2_{\mu}(\Rbb^m;\Rbb): \int_{\Rbb^m}\xi\diff \mu = 0\right\}.
\end{equation*}
There also exists a notion of the logarithmic map for the Fisher-Rao geometry, but it will not be needed in this work so we direct the reader to \cite{Background_Hellinger} for further information.

\begin{remark}[Hellinger vs. Fisher-Rao geometries]\label{rem:Hellinger-FR}
For a given collection of probability measures $\mathcal{P}\subseteq\mathcal{P}(\Rbb^m)$, we highlight that both the Hellinger and Fisher-Rao geometries will lead to the same measure of instability of unbiased estimators.
The reason is that $\mathcal{P}(\Rbb^m)$ is itself an embedded submanifold of $ \MM_+(\Rbb^m) $, which is reflected in the fact that from~\eqref{eqn:FR-from-Helliner} we can see $\mathrm{FR}(\mu, \nu)\sim H(\mu,\nu)$ as $\mu\to \nu$.
From this perspective, it is equally correct to say that the classical theory for variance is connected to either the Hellinger or Fisher-Rao geometries; in this paper we find it analytically more convenient to primarily use the Hellinger geometry. 
\end{remark}

The reader may wonder how to reconcile Remark~\ref{rem:Hellinger-FR} with the fact that the classical Cram\'er-Rao theory is also connected to the KL divergence (e.g., the MLE can be interpreted as a sort of KL projection) and the $\chi^2$ divergence (e.g., in the Hammersley-Chapman-Robbins lower bound).
These answer is that all $f$-divergences (including Hellinger distance, KL divergence, and $\chi^2$ divergence) are infinitesimally equivalent; see \cite[Theorem~7.20]{PolyanskiyWu}.

\subsection{Wasserstein Geometry}\label{subsec:wasserstein}

    Next, we review the fundamental concepts of optimal transport and the geometry of the Wasserstein space, as these will be used throughout the paper.
    We direct the reader to \cite{VillaniOldAndNew} and \cite{AmbrosioGigliSavare} for further detail on this background material, and, as before, we encourage the reader to consult Table~\ref{tab:geometry} and Figure~\ref{fig:FR-v-W} in order to facilitate the comparison between this subsection and the previous.

    First, we introduce the optimal transport problem and the Wasserstein metric.
    For $\mu,\nu\in\Pcal_2(\Rbb^m)$, the \textit{Monge optimal transport problem} is the optimization problem
    \begin{equation}\label{eqn:Monge-OT}
        \begin{cases}
            \textnormal{minimize} & \int_{\Rbb^m}\|x-T(x)\|^2\diff \mu(x)\\
            \textnormal{over} &T:\Rbb^m\to\Rbb^m \\
            \textnormal{with} &T_{\#}\mu = \nu,
        \end{cases}
    \end{equation}
    where $T_{\#}\mu$ is the \textit{pushforward} of $\mu$ by $T$, that is the probability measure on $\Rbb^m$ defined via $(T_{\#}\mu)(A) = \mu(T^{-1}(A))$ for all Borel sets $A\subseteq\Rbb^m$;
    closely related is the \textit{Kantorovich optimal transport problem} given by
    \begin{equation}\label{eqn:Kant-OT}
        \begin{cases}
            \textnormal{minimize} & \int_{\Rbb^m\times \Rbb^m}\|x-y\|^2\diff \pi(x,y)\\
            \textnormal{over} &\pi\in\Pi(\mu,\nu) \\
        \end{cases}
    \end{equation}
    where $\Pi(\mu,\nu)$ denotes the space of all \textit{couplings} of $\mu$ and $\nu$, meaning the space of all Borel probability measures on $\Rbb^m\times\Rbb^m$ whose first and second marginals are $\mu$ and $\nu$, respectively.
    We write $W_2^2(\mu,\nu)$ for the minimal value of problem~\eqref{eqn:Kant-OT}, which is called the \textit{squared 2-Wasserstein distance} between $\mu$ and $\nu$;
    the function $W_2:\Pcal_2(\Rbb^m)\times \Pcal_2(\Rbb^m)\to [0,\infty)$ is called the \textit{2-Wasserstein metric} or simply the \textit{Wasserstein metric}.
    It is known that, if we have $\mu\in\Pcal_{2,\mathrm{ac}}(\Rbb^m)$, then problems~\eqref{eqn:Monge-OT} and ~\eqref{eqn:Kant-OT} have the same value, and problem~\eqref{eqn:Monge-OT} admits a unique solution (up to null sets with respect to $\mu$) which we denote by $\transport_{\mu\to\nu}:\Rbb^m\to\Rbb^m$ and call the optimal transport map from $\mu$ to $\nu$.

    Next, we introduce the dynamical formulation of the optimal transport problem~\eqref{eqn:Monge-OT}, which is detailed in \cite[Chapter~8]{AmbrosioGigliSavare}.
    We say that a path $\{\mu_t\}_{0 \le t\le 1}$ in $\Pcal_2(\Rbb^m)$ and a time-varying $\Rbb^m$-valued vector field $\{v_t\}_{0 \le t\le 1}$ on $\Rbb^m$ \textit{satisfy the continuity equation} (see \cite[Remark~8.1.1]{AmbrosioGigliSavare}) if we have
    \begin{equation}
        \frac{\diff}{\diff t}\int_{\Rbb^m}f \diff \mu_t = \int_{\Rbb^m}(\nabla f)^{\top}v_t\diff \mu_t \qquad\textnormal{ for all }\quad 0 \le t\le 1
        \label{eqn:ContEqn}
    \end{equation}
    for all $f \in C_c^\infty (\Rbb^m)$; in other words $(\{\mu_t\}_{0 \le t\le 1},\{v_t\}_{0 \le t\le 1})$ can be understood as a solution to the partial differential equation (PDE)
    \begin{equation}\label{eqn:ContEq}
        \partial_t\mu_t + \Div(v_t\mu_t) = 0
    \end{equation}
    in ``the weak sense''. 
    Intuitively speaking, a pair $(\{\mu_t\}_{0 \le t\le 1},\{v_t\}_{0 \le t\le 1})$ satisfying the continuity equation means, at each location $x\in\Rbb^m$ and each time $0\le t\le 1$, the decrease in probability $\mu_t(x)$ is equal to the outgoing probability under the flow of the vector $v_t(x)$.
    If $(\{\mu_t\}_{0 \le t\le 1},\{v_t\}_{0 \le t\le 1})$ satisfy the continuity equation, then it is known that we have $v_0 = \nabla \phi$ for some convex function $\phi:\Rbb^m\to\Rbb$, which we refer to as the \textit{potential} of the path $\{\mu_t\}_{0 \le t\le 1}$.
    It turns out that we can write the squared 2-Wasserstein distance between $\mu$ and $\nu$ in terms of solutions to the continuity equation, via
    {\small \begin{equation}\label{eqn:Benamou-Brenier}
        W_2^2(\mu,\nu) = \min\left\{\int_{0}^{1}\|v_t\|^2_{L^2_{\mu_t}(\Rbb^m;\Rbb^m)}\diff t: (\{\mu_t\}_{0\le t\le 1},\{v_t\}_{0\le t\le 1}) \textnormal{ solves \eqref{eqn:ContEqn}, } \mu_0 = \mu \textnormal{, } \mu_1 = \nu \right\}
    \end{equation}}
    which is referred to as the \textit{Benamou-Brenier formula}.
    Plainly,~\eqref{eqn:Benamou-Brenier} states that the value $W_2^2$ equals the smallest possible ``size'' of a time-varying vector field which continually pushes $\mu$ to $\nu$.

    Next, we define the constant-speed geodesics in the metric space $(\Pcal_2(\Rbb^m), W_2)$, which are closely related to minimizers of the dynamical formulation~\eqref{eqn:Benamou-Brenier}.
    It turns out that, if $\mu\in\Pcal_{2,\mathrm{ac}}(\Rbb^m)$ and $\nu\in\Pcal_2(\Rbb^m)$ are arbitrary, then the constant-speed geodesic $\gamma:[0,1]\to \Pcal_2(\Rbb^m)$ from $\mu$ to $\nu$ is exactly
    \begin{equation}\label{eqn:WassGeo}
        \gamma(t):=((1-t)\id + t\,\transport_{\mu\to\nu})_{\#}\mu \qquad \qquad \mbox{ for all } \quad 0\le t\le 1.
    \end{equation}
    In ``Lagrangian'' terms, this means that a particle
$x \in \Rbb^m$ moves along the straight line $x \mapsto (1-t)x+t\transport_{\mu\to\nu} (x)$ at constant speed.

    Lastly, we put together these notions in order to discuss the formal Riemannian structure of the metric space $(\Pcal_2(\Rbb^m),W_2)$, often referred to as the \textit{Otto calculus}.
    By comparing the dynamical formulation \eqref{eqn:Benamou-Brenier} with the considerations of Subsection~\ref{subsec:Riemann}, the tangent space $\Tan_{\mu}^{W}(\Pcal_2(\Rbb^m))$ at $\mu$ should be a subset of $L^2_{\mu}(\Rbb^m;\Rbb^m)$.
    Since the tangent space at $\mu$ should be equal to (the closure of the span of) the space of derivatives of constant-speed geodesics emanating from $\mu$,~\eqref{eqn:WassGeo} implies that we should take
    \begin{equation*}
        \Tan_{\mu}^{W}(\Pcal_2(\Rbb^m)):= \overline{\textnormal{span}\{\transport_{\mu\to\nu} - \id: \nu\in\Pcal_2(\Rbb^m)\}}
    \end{equation*}
    where the closure is taken in the Hilbert space $L^2_{\mu}(\Rbb^m;\Rbb^m)$.
    Finally, the logarithmic map $\mathrm{Log}_{\mu}^{W}:\Pcal_2(\Rbb^m)\to\Tan_{\mu}^{W}(\Pcal_2(\Rbb^m))$ is defined via
    \begin{equation}\label{eqn:LogMapWass}
        \mathrm{Log}_{\mu}^{W}(\nu) := \transport_{\mu\to\nu}-\id.
    \end{equation}
    As before, we use superscripts to distinguish between geometric concepts in the Hellinger and Wasserstein geometries.

    As a technical note, let us mention that throughout the paper we will need to apply the continuity equation \eqref{eqn:ContEqn} to test functions that are not compactly supported and infinitely differentiable; we will often take the test function to be our \textit{estimator} of interest, which we will only restrict to satisfy, say, some local Lipschitz and mild integrability conditions.
    The next result takes care of this; since it is somewhat technical, we defer its proof, and some of the relevant definitions, to Appendix~\ref{app:cont-eq}.

\begin{proposition}
\label{prop:ContinuityEqnLocallyLipschitz}
Suppose that $(\{\mu_t\}_{0\le t\le 1}, \{v_t\}_{0\le t\le 1})$ is a solution to the continuity equation such that we have $\mu_t\in\Pcal_{2,\mathrm{ac}}(\Rbb^m)$ for all $0\le t\le 1$ as well as
\[ \int_{0}^{1} \int_{\Rbb^m} \|v_t(x)\|^2 \diff \mu_t(x)  \diff t <\infty. \]
Additionally, suppose that $f:\Rbb^m\to\Rbb$ is a locally Lipschitz function satisfying
\begin{enumerate}
\item[(i)] $\int_{\Rbb^m} | f (x)|\diff \mu_t(x) < \infty$, for all $0\le t\le 1$, and
\item[(ii)] $ \int_0^1 \int_{\Rbb^m} \|\nabla f(x)\|^2 \diff \mu_t(x) \diff t < \infty$.
\end{enumerate}
Then, the function $t \mapsto \int_{\Rbb^m} f(x) \diff \mu_t(x)$
is absolutely continuous and has distributional derivative given by
\begin{equation}
   \frac{\diff}{\diff t} \int_{\Rbb^m} f(x) \diff \mu_t(x) = \int_{\Rbb^m} \nabla f(x) \cdot    v_t(x) \diff \mu_t(x).
   \label{eqn:ContinuiyEqnLocallyLipschitz}
\end{equation}
\end{proposition}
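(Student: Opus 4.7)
The plan is to approximate $f$ by a sequence of functions in $C_c^\infty(\Rbb^m)$, apply the continuity equation~\eqref{eqn:ContEqn} as given, and pass to the limit using the integrability hypotheses (i) and (ii). I would carry out the approximation in three stages: first truncate $f$ so that it is bounded, then multiply by a smooth cutoff so that it has compact support, and finally mollify so that it is smooth.

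Concretely, I would fix a truncation level $M > 0$ and replace $f$ by $f_M := \max(-M, \min(M, f))$, which is locally Lipschitz and bounded and satisfies $\|\nabla f_M\| \le \|\nabla f\|$ almost everywhere. I would fix a smooth cutoff $\eta_R(x) = \eta(x/R)$, where $\eta \in C_c^\infty(\Rbb^m)$ equals $1$ on $B_1(0)$ and $0$ outside $B_2(0)$, so that $\|\nabla \eta_R\|_\infty \le C/R$. Then I would mollify with a standard kernel $\rho_\delta$ to obtain $f_{M,R,\delta} := (f_M \eta_R) * \rho_\delta \in C_c^\infty(\Rbb^m)$. Applying the continuity equation as stated yields, for every $0 \le s \le t \le 1$,
\begin{equation*}
    \int_{\Rbb^m} f_{M,R,\delta} \diff \mu_t - \int_{\Rbb^m} f_{M,R,\delta} \diff \mu_s = \int_s^t \int_{\Rbb^m} \nabla f_{M,R,\delta} \cdot v_r \diff \mu_r \diff r.
\end{equation*}

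Passing $\delta \to 0$ with $M$ and $R$ fixed uses that $f_M \eta_R$ is globally Lipschitz with compact support, so by Rademacher's theorem and standard mollifier estimates, $f_{M,R,\delta} \to f_M \eta_R$ uniformly and $\nabla f_{M,R,\delta} \to \nabla(f_M \eta_R)$ in $L^2_{\mathrm{loc}}(\Rbb^m, \diff x)$; this transfers to $L^2(\mu_r)$ convergence uniformly in $r$ since each $\mu_r$ is absolutely continuous and the supports are uniformly bounded. Passing $R \to \infty$ with $M$ fixed requires handling the decomposition $\nabla(f_M \eta_R) = \eta_R \nabla f_M + f_M \nabla \eta_R$: the first piece converges to the desired limit by dominated convergence using (ii) together with the $L^2$ integrability of $v$, while the second term is controlled using $\|\nabla \eta_R\|_\infty \le C/R$ and $|f_M| \le M$, combined with Cauchy--Schwarz in $(r,x)$, to obtain a bound of order $(M/R)\,(\int_0^1\int \|v_r\|^2\diff\mu_r\diff r)^{1/2}$ which vanishes as $R \to \infty$.

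Finally, I would pass $M \to \infty$: the left-hand side converges by dominated convergence using (i), and the right-hand side converges by Cauchy--Schwarz together with $\nabla f_M \to \nabla f$ in $L^2(\mu_r \diff r)$ (justified by $\|\nabla f_M\| \le \|\nabla f\|$ and (ii)). Absolute continuity of $t \mapsto \int_{\Rbb^m} f \diff \mu_t$ then follows from the integral representation, since by Cauchy--Schwarz and (ii) the map $r \mapsto \int_{\Rbb^m} \nabla f \cdot v_r \diff \mu_r$ belongs to $L^1([0,1])$. The main technical obstacle is the middle step: certifying $L^2(\mu_r)$ convergence of the mollified gradients and justifying the cutoff removal both rely on Rademacher's theorem applied to the truncated function, along with the absolute continuity of $\mu_r$ to pass from Lebesgue-$L^2$ bounds to $\mu_r$-$L^2$ bounds, which is why the pre-truncation by $M$ is essential.
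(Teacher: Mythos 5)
Your strategy --- approximating the test function by truncation, cutoff, mollification, applying the continuity equation for $C_c^\infty$ test functions in integrated form, and passing to the limit in three stages --- is a genuinely different route from the paper's. The paper instead invokes the superposition principle (\cite[Theorem~8.2.1]{AmbrosioGigliSavare}), representing the flow as a random solution to the ODE $\dot\gamma = v_t(\gamma)$, and proves that for $\pmb\eta$-a.e.~trajectory the chain rule for $f\circ\gamma$ holds a.e.~in time; absolute continuity of $\mu_t$ is used there to argue that trajectories avoid the non-differentiability set of $f$. Your approach is more elementary (no superposition principle needed), and, interestingly, your truncate-then-cutoff scheme with $\|\nabla\eta_R\|_\infty\le C/R$ and $|f_M|\le M$ gives a cleaner bound on the boundary term than the paper's Step~2, which uses a width-one cutoff without truncation.

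However, there is a flaw in the $\delta\to 0$ step. You justify the limit by claiming that $\nabla f_{M,R,\delta}\to\nabla(f_M\eta_R)$ in $L^2_{\mathrm{loc}}(\Rbb^m,\diff x)$ and that ``this transfers to $L^2(\mu_r)$ convergence uniformly in $r$ since each $\mu_r$ is absolutely continuous.'' That inference is false. Convergence in $L^2(\diff x)$ does not imply convergence in $L^2(\rho\diff x)$ for a merely integrable density $\rho$; for example, if $g_n = n^{1/4}\mathbf{1}_{(0,1/n)}$ then $\|g_n\|_{L^2(\diff x)}\to 0$ yet $\int g_n^2\rho\,\diff x \not\to 0$ for $\rho(x)\asymp x^{-1/2}$. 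Nothing in the hypotheses rules out $\diff\mu_r/\diff x$ being unbounded, and the proposition is meant to cover precisely such measures. The uniformity in $r$ is also unjustified and not needed.

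The gap is fixable with a different justification at that step, which actually uses absolute continuity in the right way: $f_M\eta_R$ is globally Lipschitz with some constant $L$, so $\|\nabla f_{M,R,\delta}\|_\infty\le L$ for all $\delta$; mollifications converge $\diff x$-a.e.~(at Lebesgue points of $\nabla(f_M\eta_R)$), hence $\mu_r$-a.e.~since $\mu_r\ll\diff x$; then dominated convergence (dominator $L\|v_r(\cdot)\|\in L^1(\mu_r)$) gives $\int\nabla f_{M,R,\delta}\cdot v_r\diff\mu_r\to\int\nabla(f_M\eta_R)\cdot v_r\diff\mu_r$ for a.e.~$r$, and the outer $r$-integral is handled by the dominator $L\bigl(\int\|v_r\|^2\diff\mu_r\bigr)^{1/2}\in L^1([0,1])$. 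With this repair the rest of your argument --- the $R\to\infty$ and $M\to\infty$ limits, and the conclusion of absolute continuity from the $L^1([0,1])$ bound on $r\mapsto\int\nabla f\cdot v_r\diff\mu_r$ via Cauchy--Schwarz --- goes through.
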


\begin{remark}[differentiability Lebesgue almost everywhere]\label{rem:DistrDeriv}
The conclusion of Proposition~\ref{prop:ContinuityEqnLocallyLipschitz} states that the function $t\mapsto \int_{\Rbb^m} f(x) \diff \mu_t(x)$ has a distributional derivative which is a function, hence we may interpret \eqref{eqn:ContinuiyEqnLocallyLipschitz} as an equality for Lebesgue almost every $t \in [0,1]$.
However, we should not expect this function to be differentiable for all $0\le t\le 1$ without making additional assumptions.
\end{remark}

\begin{remark}[absolute continuity assumption]\label{rem:ContEqAbsCont}
Because Proposition~\ref{prop:ContinuityEqnLocallyLipschitz} requires $\{\mu_t\}_{0\le t\le 1}$ to possess densities with respect to the Lebesgue measure, we will later restrict our attention to statistical models $\Pcal = \{P_{\theta}:\theta\in\Theta\}$ satisfying $\Pcal\subseteq\Pcal_{2,\mathrm{ac}}(\Rbb^d).$
(Also, see Remark~\ref{rem:AC-sens} and Remark~\ref{rem:ACMeasures}.)
\end{remark}

    \section{Sensitivity of Estimators}\label{subsec:sensitivity}

    Our first goal is to precisely define the fundamental notion of sensitivity and to explore some basic properties.
    To begin, consider some probability measure $P\in\Pcal_2(\Rbb^d)$, and we let $(\Omega,\Fcal,\Pbb)$ denote a generic probability space supporting a sequence $X_1,\ldots, X_n$ of i.i.d. random variables with distribution $P$.
    We also write $X$ for an additional copy of these random variables, for ease of notation in some expressions.
    We write $\Ebb_P$ for the expectation on $(\Omega,\Fcal,\Pbb)$, where we emphasize the law $P$ of $X$.

    \begin{definition}
    	\label{def:Sensiitivity}
        For $P\in\Pcal_2(\Rbb^d)$ and $T_n:(\Rbb^d)^n\to\Rbb$, the \textit{sensitivity} is the value
        \begin{equation*}
            \Sen_{P}(T_n) := \Ebb_{P}\left[\sum_{i=1}^{n}\|\nabla_{x_i}T_n(X_1,\ldots, X_n)\|^2\right]
        \end{equation*}
        if it is well-defined, and infinity otherwise.
    \end{definition}

\begin{remark}[absolute continuity of $P$, and definition of sensitivity]\label{rem:AC-sens}
To make sense of $ \Sen_{P}(T_n)$ for a large class of statistics $T_n$, in the sequel we will mostly consider probability measures $P \in \mathcal{P}_2(\Rbb^d)$ that are absolutely continuous with respect to the Lebesgue measure. Indeed, if $T_n$ is a statistic that is not differentiable everywhere (e.g., $T_n(X_1,\ldots,X_n) = \max\{X_1,\ldots, X_n\}$ in the case $d=1$), then the expression for $\Sen_{P}(T_n)$ is well-defined only if $P$ assigns zero probability to the set of points where $T_n$ is not differentiable.
In the other direction, when $P$ is assumed to be absolutely continuous with respect to the Lebesgue measure, and $T_n$ is, say, locally Lipschitz, then Rademacher's theorem implies that $\Sen_P(T_n)$ is well-defined (although it can still take the value $+\infty$ if the derivatives of $T_n$ are not squared-integrable.)   
\end{remark}


    \begin{remark}[condensed notation]
        If we simply write $\nabla T_n(X_1,\ldots,X_n)$ for a column vector of length $dn$ (notably, not as a $d\times n$ or $n\times d$ matrix), then we can write
        \begin{equation*}
            \Sen_{P}(T_n) = \Ebb_{P}\left[\|\nabla T_n(X_1,\ldots, X_n)\|^2\right].
        \end{equation*}
        This condensed notation is useful in some settings.
    \end{remark}
    
    As motivation for studying the sensitivity of an estimator, which is closely related to the simulations of the Introduction, we consider the following related notion.

    \begin{definition}
        For $P\in\Pcal_2(\Rbb^d)$, $T_n:(\Rbb^d)^n\to\Rbb$, and $\varepsilon>0$, the \textit{$\varepsilon$-sensitivity} of $T_n$ is defined as
        \begin{equation*}
            \Sen_{P,\varepsilon}(T_n) := \Ebb_{P}\left[\left|\frac{T_n(X_1',\ldots, X_n') - T_n(X_1,\ldots, X_n)}{\varepsilon}\right|^2\right],
        \end{equation*}
        where $X_i' := X_i + \xi_i$ for $1\le i\le n$ and $\xi_1,\ldots, \xi_n$ are i.i.d. samples from a Gaussian distribution with mean zero and covariance matrix $\varepsilon^2\idmat_d \in \Rbb^{d \times d}$ such that $\xi_1,\ldots, \xi_n$ are independent of $X_1,\ldots, X_n$.
        (Here, the expectation is taken over $X_1,\ldots, X_n$ and $\xi_1,\ldots, \xi_n$.)
    \end{definition}

    As we alluded to in the Introduction, the sensitivity can be viewed as a limiting case of $\varepsilon$-sensitivity as $\varepsilon\to 0$.
    Our goal is not to establish this convergence in the most general setting, but rather to illustrate the basic connection between these notions; the following easy result (proven in Appendix~\ref{app:proofs-sensitivity}) can certainly be generalized with further work (e.g., by using the ideas in \cite{Ponce2004}). 
     
    \begin{proposition}\label{prop:eps-sen-cvg}
    	\label{prop:SensitivityLimit}
        If $P\in\Pcal_2(\Rbb^d)$ and $T_n: (\Rbb^d)^n\to\Rbb$ is continuously twice differentiable and compactly supported, then $\Sen_{P,\varepsilon}(T_n) \to \Sen_{P}(T_n)$ as $\varepsilon\to 0$.
    \end{proposition}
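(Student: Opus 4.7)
The plan is to prove the convergence via a Taylor expansion, a rescaling of the noise to standard Gaussians, and an application of the dominated convergence theorem. Since $T_n$ is continuously twice differentiable with compact support, its gradient $\nabla T_n$ is uniformly bounded, say by a constant $L<\infty$; this uniform bound will provide the integrable dominant.

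First, I would change variables by setting $Z_i := \xi_i / \varepsilon$ for $1\le i\le n$, so that $Z_1,\ldots,Z_n$ are i.i.d. standard Gaussian vectors on $\Rbb^d$, independent of $X_1,\ldots,X_n$. Writing $X := (X_1,\ldots,X_n)$ and $Z := (Z_1,\ldots,Z_n)$ for concision, this lets one express
\[
\Sen_{P,\varepsilon}(T_n) = \Ebb_P \Ebb_Z\!\left[\left|\frac{T_n(X+\varepsilon Z) - T_n(X)}{\varepsilon}\right|^2\right].
\]
Next, I would note that the mean value theorem applied coordinatewise yields the pointwise bound
\[
\left|\frac{T_n(X+\varepsilon Z) - T_n(X)}{\varepsilon}\right| \le L \, \|Z\|
\]
valid for all $X$, $Z$, and $\varepsilon>0$. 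Squaring and taking expectation shows that the integrand is dominated by $L^2 \|Z\|^2$, which is integrable under the joint law of $(X,Z)$ since $\Ebb \|Z\|^2 = n d < \infty$, independently of $X$.

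For each fixed $X$ and $Z$, the $C^1$ differentiability of $T_n$ gives the pointwise convergence
\[
\frac{T_n(X+\varepsilon Z) - T_n(X)}{\varepsilon} \longrightarrow \sum_{i=1}^n \nabla_{x_i} T_n(X)^{\top} Z_i
\]
as $\varepsilon\to 0$. Squaring and applying the dominated convergence theorem with the bound above, I would conclude that
\[
\Sen_{P,\varepsilon}(T_n) \longrightarrow \Ebb_P \Ebb_Z\!\left[\left|\sum_{i=1}^n \nabla_{x_i}T_n(X)^{\top} Z_i\right|^2\right].
\]
Finally, expanding the square and using the fact that the $Z_i$ are independent mean-zero Gaussians with $\Ebb[Z_i Z_j^{\top}] = \delta_{ij} \idmat_d$, the inner expectation in $Z$ collapses to $\sum_{i=1}^{n} \|\nabla_{x_i} T_n(X)\|^2$, and taking the outer expectation in $X$ recovers $\Sen_P(T_n)$ exactly.

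There is no serious obstacle here; the only technical point is securing a uniform integrable dominant, and this is precisely what the compact support hypothesis buys (through boundedness of $\nabla T_n$). The stronger $C^2$ assumption in the statement is in fact more than needed for this qualitative convergence, but it would permit a quantitative refinement: using the second-order Taylor remainder (bounded uniformly by $\tfrac{1}{2}\|\nabla^2 T_n\|_\infty\,\|\xi\|^2$) one could extract an $O(\varepsilon)$ rate of convergence, although this is not required for the statement as written.
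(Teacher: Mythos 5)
Your proof is correct, but it takes a genuinely different route than the paper's. The paper uses a second-order Taylor expansion to bound the $L^2_P$ distance between the rescaled increment $\varepsilon^{-1}(T_n(X')-T_n(X))$ and the linearization $\varepsilon^{-1}\nabla T_n(X)^\top\xi$ by $O(\varepsilon^2)$, then observes that the $L^2_P$ norm of the linearization is \emph{exactly} $\Sen_P(T_n)$ for every $\varepsilon>0$ (via the conditional Gaussian second-moment identity). You instead rescale the noise to standard Gaussians, use the uniform Lipschitz bound (from compact support and $C^1$) to furnish an $\varepsilon$-independent dominant $L^2\|Z\|^2$, invoke pointwise convergence of the difference quotient, and apply dominated convergence. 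Both arguments are sound. The trade-off is exactly as you note: your approach only requires $C^1$ regularity and is somewhat more elementary, while the paper's $C^2$-based Taylor estimate yields the explicit $O(n\varepsilon^2)$ bound on the squared $L^2$ error, which the authors immediately exploit in the discussion following the proposition to derive the practical guideline $\varepsilon\ll n^{-1/2}$ for choosing the noise level in simulations. Since the proposition statement as written does assume $C^2$, both proofs apply, but yours establishes a marginally more general fact at the cost of losing the quantitative information the authors wanted.
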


    The proof of this result reveals two things worth emphasizing.
    First, the sensitivity is a ``universal'' quantity, in the sense that it does not directly rely on the distribution of $\xi_1,\ldots, \xi_n$ being Gaussian; although the $\varepsilon$-sensitivity may change for non-Gaussian $\xi$, the limit as $\varepsilon\to 0$ depends only on their mean and covariance matrix.
    Second, the proof identifies that the approximation error of the convergence $\Sen_{P,\varepsilon}(T_n) \to \Sen_{P}(T_n)$ vanishes as long as $\varepsilon\ll n^{-1/2}$; this provides some guidance on how to set $\varepsilon$ in simulations where it is desired that the $\varepsilon$-sensitivity should approximate the sensitivity, and this was used in the design of the simulation in Figure~\ref{fig:uniform}.

    Our first main result on the notion of sensitivity is the following fundamental lower bound, which requires a few technical assumptions in order to justify our application of the continuity equation.
    Its proof is simple and instructive, so we include it here.
    
    \begin{theorem}\label{thm:W-Chap-Robb-univar}
        Suppose that $(\{P_t\}_{0\le t\le 1},\{V_t\}_{0\le t\le 1})$ is a solution to the continuity equation in $\Rbb^d$, that $P_t\in\Pcal_{2,\textnormal{ac}}(\Rbb^d)$ for all $0\le t\le 1$, that $T_n:(\Rbb^d)^n\to\Rbb$ is locally Lipschitz, and that
\begin{enumerate}
\item[(i)] $\Ebb_{P_t}|T_n(X_1,\ldots, X_n)| < \infty$, for all $0\le t\le 1$, and
\item[(ii)] $ \int_0^1\Ebb_{P_t}\left[\|\nabla_{x_i} T_n(X_1,\ldots, X_n)\|^2\right]  \diff t < \infty$ for all $1\le i\le n$.
\end{enumerate}
Then we have
    \begin{equation*}
        \Sen_{P_t}(T_n) \ge \frac{\left(\frac{\diff}{\diff t}\Ebb_{P_t}[T_n(X_1,\ldots, X_n)]\right)^2}{n\Ebb_{P_t}[\|V_t(X)\|^2]}
    \end{equation*}
    for Lebesgue almost all $0\le t\le 1$ and all $n\in\Nbb$.
    \end{theorem}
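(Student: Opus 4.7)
The plan is to lift the given continuity equation on $\Rbb^d$ to the product space $(\Rbb^d)^n$, apply Proposition~\ref{prop:ContinuityEqnLocallyLipschitz} with $T_n$ as a locally Lipschitz test function, and then conclude by Cauchy-Schwarz. Concretely, define the velocity field $W_t:(\Rbb^d)^n\to(\Rbb^d)^n$ by
\begin{equation*}
    W_t(x_1,\ldots,x_n) := (V_t(x_1),\ldots,V_t(x_n))
\end{equation*}
and the curve $\{P_t^{\otimes n}\}_{0\le t\le 1}$ in $\Pcal_{2,\mathrm{ac}}((\Rbb^d)^n)$. I would first verify that $(\{P_t^{\otimes n}\},\{W_t\})$ solves the continuity equation on $(\Rbb^d)^n$: for each $f\in C_c^{\infty}((\Rbb^d)^n)$, integrating out all coordinates other than the $i$-th produces a function $h_i\in C_c^{\infty}(\Rbb^d)$ to which the original continuity equation applies, and summing over $i$ with a Fubini/chain-rule argument recovers the product continuity equation.

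Next, I would apply Proposition~\ref{prop:ContinuityEqnLocallyLipschitz} to the product solution $(\{P_t^{\otimes n}\},\{W_t\})$ with the locally Lipschitz test function $T_n$. Hypothesis (i) of the theorem yields $T_n\in L^1(P_t^{\otimes n})$ and hypothesis (ii), after summing over $1\le i\le n$, yields $\int_0^1\int\|\nabla T_n\|^2\diff P_t^{\otimes n}\diff t<\infty$; these are exactly the hypotheses required by the proposition. The conclusion then reads
\begin{equation*}
    \frac{\diff}{\diff t}\Ebb_{P_t}[T_n(X_1,\ldots,X_n)] = \sum_{i=1}^{n}\Ebb_{P_t^{\otimes n}}\!\left[\nabla_{x_i}T_n(X_1,\ldots,X_n)\cdot V_t(X_i)\right]
\end{equation*}
for Lebesgue almost every $t\in[0,1]$.

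Finally, the right-hand side is the $L^2_{P_t^{\otimes n}}((\Rbb^d)^n;\Rbb^{dn})$ inner product of the vector fields $(\nabla_{x_i}T_n)_{i=1}^n$ and $(V_t(X_i))_{i=1}^n$. The Cauchy-Schwarz inequality in this Hilbert space yields
\begin{equation*}
    \left(\frac{\diff}{\diff t}\Ebb_{P_t}[T_n]\right)^{2} \le \Ebb_{P_t^{\otimes n}}\!\left[\sum_{i=1}^{n}\|\nabla_{x_i}T_n\|^2\right]\cdot\Ebb_{P_t^{\otimes n}}\!\left[\sum_{i=1}^{n}\|V_t(X_i)\|^2\right].
\end{equation*}
The first factor equals $\Sen_{P_t}(T_n)$ by definition, and since $X_1,\ldots,X_n$ are i.i.d. $P_t$ under $P_t^{\otimes n}$, the second factor equals $n\,\Ebb_{P_t}[\|V_t(X)\|^2]$; rearranging gives the claimed inequality.

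The only genuinely delicate step is the first one — checking that $(\{P_t^{\otimes n}\},\{W_t\})$ solves the continuity equation on $(\Rbb^d)^n$ in the weak sense, since the continuity equation has only been assumed coordinate-wise on $\Rbb^d$. This is essentially a Fubini argument combined with the chain rule for absolutely continuous functions of one real variable, but it requires some care to ensure that all exchanges of integration and differentiation are legitimate for general $f\in C_c^{\infty}((\Rbb^d)^n)$; the rest of the proof is then a mechanical application of Proposition~\ref{prop:ContinuityEqnLocallyLipschitz} and Cauchy-Schwarz.
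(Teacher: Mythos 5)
Your overall architecture matches the paper's exactly: lift to the product space with the block-diagonal velocity field, verify the product continuity equation, apply Proposition~\ref{prop:ContinuityEqnLocallyLipschitz} with $T_n$ as test function, and finish with Cauchy-Schwarz. The one place you diverge — and where your sketch as written has a subtle imprecision — is the verification step. You propose integrating out all coordinates except the $i$-th to get a function $h_i\in C_c^{\infty}(\Rbb^d)$ to which the single-coordinate continuity equation applies. But this $h_i$ depends on $t$ through the marginals $P_t$ over which you are integrating the remaining coordinates; it is not a fixed test function, so the continuity equation~\eqref{eqn:ContEqn} does not directly apply to it. To make this work you would need something like a telescoping decomposition (replace one marginal at a time) together with dominated convergence, which is doable but requires real care. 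The paper sidesteps this entirely by invoking Nachbin's density theorem to reduce to product test functions $f(x_1,\dots,x_n)=f_1(x_1)\cdots f_n(x_n)$, for which the Leibniz rule gives the product continuity equation in one line, and then by density (in the sense of \cite[Remark~8.1.1]{AmbrosioGigliSavare}) extends to general $f\in C_c^{1}((\Rbb^d)^n)$. So your plan is correct in spirit, but the single ``delicate step'' you flag is precisely where the paper deploys a density argument that you should either adopt or replace with a carefully justified Fubini/telescoping computation; as stated, the $t$-dependence of $h_i$ makes your version incomplete.
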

    
    \begin{proof}
        Suppose that $\{P_t\}_{0\le t\le 1}$ satisfies the continuity equation with time-varying vector field $\{V_t\}_{0\le t\le 1}$. We first show that the product measures $P_t^{\otimes n}$ satisfy the continuity equation in $\Rbb^{dn}$ with the time-varying vector field $V_t^{\otimes n}$ defined by
    \begin{equation}\label{eqn:prod-CE}
        V_t^{\otimes n}(x_1,\ldots, x_n) := \begin{pmatrix}
            V_t(x_1),\ldots, V_t(x_n)
        \end{pmatrix}^{\top}
    \end{equation}
    for all $x_1,\ldots, x_n\in\Rbb^d$ and $0\le t\le 1$.
    Indeed, by Nachbin's theorem (see \cite{Lesmes1974} and \cite{Prolla1976} and the references therein) and \cite[Remark~8.1.1]{AmbrosioGigliSavare}, 
    it suffices to consider test functions $f \in C^1_c(\Rbb^{dn})$ of the form 
    \[ f(x_1, \dots, x_n) = f_1(x_1) \cdots f_n(x_n),\] 
    for functions $f_1, \dots, f_n \in C_c^1(\Rbb^d)$. Note that for such functions we have
    \begin{align*}
&\frac{\diff}{ \diff t} \int_{\Rbb^{dn}} f(x_1, \dots, x_n) \diff P_t^{\otimes n}(x_1, \dots, x_n) \\
& = \frac{\diff}{\diff t} \prod_{i=1}^n \int_{\Rbb^d} f_i(x_i) \diff P_t(x_i) 
\\& = \sum_{i=1}^n \int_{\Rbb^{dn}}  \left(\nabla_{x_i}f_i(x_i)\right)^{\top} V_t(x_i)) \prod_{j \not = i} f_j(x_j) \diff P_t^{\otimes n }(x_1, \dots, x_n)   
\\&=  \int_{\Rbb^{dn}} \left(\nabla f(x_1, \dots, x_n)\right)^{\top} V_t^{\otimes n} (x_1, \dots, x_n) \diff P_t^{\otimes n }(x_1, \dots, x_n).
    \end{align*}    
    Now observe that by assumptions $(i)$ and $(ii)$, the hypotheses of Proposition~\ref{prop:ContinuityEqnLocallyLipschitz} are satisfied, hence we can apply the continuity equation and Remark~\ref{rem:DistrDeriv} to $T_n$, and then use Cauchy-Schwarz inequality to get
        \begin{align*}
            \frac{\diff}{\diff t}\Ebb_{P_t}[T_n(X_1,\ldots, X_n)] &= \Ebb_{P_t}\left[\left\langle V_t^{\otimes n}(X_1,\ldots, X_n),\nabla T_n(X_1,\ldots, X_n)\right\rangle\right] \\
            &\le \sqrt{\Ebb_{P_t}\left[\left\|V_t^{\otimes n}(X_1,\ldots, X_n)\right\|^2\right]\Sen_{P_t}(T_n)} \\
            &= \sqrt{n\Ebb_{P_t}\left[\left\|V_t(X)\right\|^2\right]\Sen_{P_t}(T_n)}.
        \end{align*}
        Rearranging this gives the desired result.
    \end{proof}

    \begin{remark}[independent but not i.i.d.]\label{rem:indep-not-iid}
        The factor $n^{-1}$ in Theorem~\ref{thm:W-Chap-Robb-univar} is due to the i.i.d. structure of $X_1,\ldots, X_n$, but an analogous result holds assuming only independence.
        Indeed, if the distribution $\{P_t^i\}_{0\le t\le 1}$ of each $X_i$ satisfies the continuity equation with time-varying vector field $\{V_t^i\}_{0\le t\le 1}$, then the joint distribution of $X_1,\ldots, X_n$ satisfies the continuity equation with time-varying vector field
        $(x_1,\ldots, x_n)\mapsto (V_t^1(x_1),\ldots, V_t^n(x_n))^{\top}$ (cf.~\eqref{eqn:prod-CE}), and the resulting lower bound has denominator $\sum_{i=1}^{n}\Ebb_{P_t^i}[\|V_t^i(X_i)\|^2]$. 
        This generality will later be applied in the setting of fixed-design linear regression in Example~\ref{ex:regression}.
    \end{remark}
    \begin{remark}\label{rem:loc-Lip}
The assumption that $T_n$ is locally Lipschitz in Theorem \ref{thm:W-Chap-Robb-univar} is mild and is satisfied in most examples of interest. In fact, we believe that this assumption is quite sharp. Indeed, first note that locally Lipschitz functions are precisely the ones whose composition with an arbitrary locally absolutely continuous function remains locally absolutely continuous (see \cite[Theorem 3.55]{Leoni}). Then, observe that the latter property is implicitly used in our proof of Proposition \ref{prop:ContinuityEqnLocallyLipschitz}, which is the essential ingredient in the proof of Theorem \ref{thm:W-Chap-Robb-univar}.
\end{remark} 

    We now introduce a notion of sensitivity for vector-valued functions.
    To do this, we first need to introduce a notion of cosensitivity between two functionals.

    \begin{definition}
        For $P\in\Pcal_2(\Rbb^d)$ and $T_{n,1},T_{n,2}:(\Rbb^d)^n\to\Rbb$, their \textit{cosensitivity} is
        \begin{align*}
            \Cos_{P}(T_{n,1},T_{n,2}) & := \Ebb_{P}\left[\sum_{i=1}^{n}(\nabla_{x_i} T_{n,1}(X_1,\ldots, X_n))^{\top}\nabla_{x_i} T_{n,2}(X_1,\ldots, X_n)\right] \\
            &= \Ebb_{P}[(\nabla T_{n,1}(X_1,\ldots, X_n))^{\top}\nabla T_{n,2}(X_1,\ldots, X_n)],
        \end{align*}
        provided that the above expression is well-defined.
    \end{definition}

    Of course, a vector-valued function $T_n:(\Rbb^d)^n\to\Rbb^k$ may be written as $T=(T_{n,1},\ldots, T_{n,k})$ for functions $T_{n,1},\ldots, T_{n,k}:(\Rbb^d)^n\to\Rbb$, and it is desirable to construct a cosensitivity matrix $\Cos_{P}(T_n)\in\Rbb^{k\times k}$ such that the $(\ell,\ell')$-entry of $\Cos_{P}(T_n)$ is just the cosensitivity between $T_{n,\ell}$ and $T_{n,\ell'}$, or
    \begin{equation*}
        (\Cos_{P}(T_n))_{\ell,\ell'} := \Cos_{P}(T_{n,\ell},T_{n,\ell'}),
    \end{equation*}
    for $1\le \ell,\ell'\le k$.
    This is done as follows, where we recall the convention that $D_{x_i}T_n(X_1,\ldots, X_n)$ is a matrix of shape $d\times k$ for each $1\le i \le n$, and that $DT_n(X_1,\ldots, X_n)$ is a matrix of shape $dn\times k$.
    
    \begin{definition}\label{def:cosens}
        For $P\in\Pcal_2(\Rbb^d)$ and $T_n:(\Rbb^d)^n\to\Rbb^k$, the \textit{cosensitivity matrix} is the $p\times p$ matrix
        \begin{align*}
            \Cos_{P}(T_n) & := \Ebb_{P}\left[\sum_{i=1}^{n}(D_{x_i}T_n(X_1,\ldots, X_n))^{\top}D_{x_i}T_n(X_1,\ldots, X_n)\right] \\
            &= \Ebb_{P}[(D T_n(X_1,\ldots, X_n))^{\top}DT_n(X_1,\ldots, X_n)],
        \end{align*}
        provided that the above expression is well-defined.
    \end{definition}

    Note that we used the same symbol $\Sen_{P}(\cdot)$ to denote the sensitivity, cosensitivity, and cosensitivity matrix; there is no risk of ambiguity since the argument always distinguishes between these notions.

    \section{Wasserstein-Cram\'er-Rao Lower Bound for Sensitivity}\label{sec:bound}

    In this section we consider the common statistical situation where our model of interest can be written as $\Pcal = \{P_{\theta}:\theta\in\Theta\}\subseteq \Pcal_2(\Rbb^d)$ for some open set $\Theta\subseteq \Rbb^p$ and that we wish to estimate some function $\chi(\theta)$ of the unknown parameter $\theta$; here, $\chi:\Theta\to\Rbb^k$ is a differentiable transformation.
    Throughout this section, we use the subscript $\theta$ as shorthand for the subscript $P_{\theta}$ (e.g., $\Ebb_{\theta}$ for $\Ebb_{P_{\theta}}$).

    We remark that all of the notions and results in this section use ``local'' properties of the model $\{P_{\theta}:\theta\in\Theta\}$, meaning that the structure at $\theta_0\in\Theta$ depends only on the measures $\{P_{\theta}:\theta\in U\}$ for an arbitrary open set $U\subseteq \Theta$ containing $\theta_0$.
    Consequently, one may replace all ``global'' conditions with ``local'' ones without changing the conclusions.
    However, we will not explicitly write this in the remainder of the section.
    
    We need to impose some notion of regularity on the model $\Pcal$ in order for our lower bounds to hold.
    Recall in Subsection~\ref{subsec:wasserstein} we introduced the notation $\transport_{\mu\to\nu}$ for the optimal transport map from $\mu$ to $\nu$ which is unique if, say, $\mu$ possesses a density with respect to the Lebesgue measure.

    \begin{definition}\label{def:DWS}
        We say that $\Pcal=\{P_{\theta}:\theta\in\Theta\}$ is \textit{differentiable in the Wasserstein sense (DWS) at} $\theta\in\Theta$ if there exists a function $\Phi_{\theta}:\Rbb^d\to\Rbb^{d\times p}$, called the \textit{transport linearization}, such that
        \begin{equation}\label{eqn:DWS}
            \int_{\Rbb^d}\|\transport_{P_{\theta}\to P_{\theta+th}}(x)- x -t\Phi_{\theta}(x)h\|^2\diff P_{\theta}(x) = o(t^2)
        \end{equation}
        for all $h\in\Rbb^p$,  where $\transport_{P_{\theta}\to P_{\theta+th}}$ is the optimal transport map from $P_{\theta}$ to $P_{\theta+th}$.
        We say $\Pcal$ is \textit{DWS} if it is DWS at all $\theta\in\Theta$.
    \end{definition}   

    Throughout the remainder of this section we will provide various interpretations of the DWS condition and the transport linearization.
    For now, let us simply state that a model $\Pcal$ being DWS means, roughly speaking, that $\Pcal$ is a submanifold of $\Pcal_2(\Rbb^d)$, and the transport linearization $\Phi_{\theta}$ allows one to transform increments in $\Theta$ near $\theta$ into tangent vectors in $\Pcal$ at $P_{\theta}$.
    We also note that our notion of DWS requires the existence of unique optimal transport maps between the elements of the family $\Pcal$, which is typically achieved as a result of the assumption that all elements of $\Pcal$ have a density with respect to the Lebesgue measure (i.e., $\Pcal\subseteq\Pcal_{2,\mathrm{ac}}(\Rbb^d)$).

    \begin{remark}[DWS via pointwise differentiation]
        In practice, one typically verifies DWS by finding the optimal transport maps for all $\theta$, computing pointwise derivative as in \eqref{eqn:W-score-pointwise}, and then showing that one can exchange limit and integration in~\eqref{eqn:DWS}.
        However, in some settings (e.g., Example~\ref{eqn:W-fam-from-ODE}), one verifies DWS directly without computing the optimal transport maps themselves.
    \end{remark}

        In the following remarks, we provide some comparisons of DWS with the notion of differentiability in quadratic mean (DQM) studied in the classical theory for variance \cite[Section~5.5]{vanDerVaart}.

    \begin{remark}[geometric interpretation of DWS and DQM]\label{rem:DWS-v-DQM}
        In order to see the analogy between DWS and DQM, observe that \eqref{eqn:DWS} can be equivalently written in terms of the logarithmic map for the Wasserstein geometry defined in \eqref{eqn:LogMapWass} as
        \begin{equation}\label{eqn:DWS-log}
            \int_{\Rbb^d}\left\| \mathrm{Log}_{P_\theta}^W(P_{\theta+th}) -t\,\Phi_{\theta}h\right\|^2\diff P_\theta = o(t^2)\qquad \mbox{for all $h\in\Rbb^p$}.
        \end{equation}
        Now let us suppose that all elements of $\Pcal$ are dominated by a common $\sigma$-finite measure $\lambda$ on $\Rbb^d$, and that $\Pcal$ is DQM, meaning for each $\theta\in\Theta$ there exists a function $G_{\theta}:\Rbb^d\to\Rbb^p$, called the \textit{score function at $\theta$}, such that
        \begin{equation}\label{eqn:DQM}
            \int_{\Rbb^d}\left|\sqrt{\frac{\diff P_{\theta+th}}{\diff \lambda}(x)} - \sqrt{\frac{\diff P_{\theta}}{\diff \lambda}(x)} -t\sqrt{\frac{\diff P_{\theta}}{\diff \lambda}(x)}\,(G_{\theta}(x))^{\top}h\right|^2\diff \lambda(x) = o(t^2)
        \end{equation}
        for all $h\in\Rbb^p$.
        By factoring out the term $\sqrt{\frac{\diff P_{\theta}}{\diff \lambda}(x)}$ from the square and recalling the logarithmic map for the Hellinger geometry defined in \eqref{eqn:LogMapHellinger}, an equivalent formulation of \eqref{eqn:DQM} is
        \begin{equation}\label{eqn:DQM-log}
            \int_{\Rbb^d}\left| \mathrm{Log}_{P_\theta}^H(P_{\theta+th}) -t\,G_{\theta}h\right|^2\diff P_\theta = o(t^2) \qquad \mbox{for all $h\in\Rbb^p$}.
        \end{equation}
        It is clear that \eqref{eqn:DWS-log}  and \eqref{eqn:DQM-log} are of precisely the same form, but in different geometries.
    \end{remark}

    \begin{remark}[DWS vs. DQM]\label{rem:DWS-v-DQM-exs}
        While we will see throughout the paper that many concrete examples of interest are both DWS and DQM, we can see that neither implies the other by noting that: (i) DWS does not make restrictions about the supports of its elements (e.g., the uniform scale family is DWS but not DQM), (ii) DQM does not make restrictions about the moments of its elements (e.g., the Cauchy location family is DQM but not DWS), and (iii) DQM does not make restrictions about the existence of transport maps between its elements (e.g., exponential families with discrete reference measure are DQM but not DWS).
    \end{remark}
    
    The following result, which is a consequence of \cite[Proposition~8.4.6]{AmbrosioGigliSavare}, shows that, for $h \in \Rbb^p$, the path $\{P_{\theta +th}\}_{0 \le t\le 1}$ indeed satisfies the continuity equation~\eqref{eqn:ContEqn}.
    While it is difficult to directly write down its corresponding time-varying vector field $\{V_t\}_{0\le t\le 1}$, the following result shows that $V_0$ is precisely the function $\Phi_{\theta}h$ defined via $\Phi_{\theta}h(x):= \Phi_{\theta}(x)h$.
    \begin{lemma}\label{lem:DWS-potential}
        If $\Pcal$ is DWS at $\theta\in\Theta$, then for every $h \in \Rbb^p$ the path $\{P_{\theta +th}\}_{0 \le t\le 1}$ has potential $\phi$ satisfying $\nabla \phi = \Phi_{\theta}h$.
    \end{lemma}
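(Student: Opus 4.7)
The plan is to deduce from the DWS condition both that the path $\{P_{\theta+th}\}_{0\le t\le 1}$ obeys the continuity equation at $t=0$ with velocity field $\Phi_\theta h$, and that this velocity is a gradient. I will argue directly with the optimal transport maps and Brenier's theorem, rather than invoke \cite[Proposition~8.4.6]{AmbrosioGigliSavare} as a black box, although the gradient structure of the Wasserstein tangent space is exactly the input drawn from that proposition.

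First I establish the continuity equation at $t=0$. Let $T_t := \transport_{P_\theta \to P_{\theta+th}}$, so that $P_{\theta+th} = (T_t)_{\#}P_\theta$. For any $f \in C_c^\infty(\Rbb^d)$, a second-order Taylor expansion of $f \circ T_t - f$ yields
\begin{equation*}
\int f\,\diff P_{\theta+th} - \int f\,\diff P_\theta = \int (\nabla f(x))^{\top}(T_t(x)-x)\,\diff P_\theta(x) + R_t,
\end{equation*}
with remainder controlled by $|R_t| \le \tfrac{1}{2}\|\nabla^2 f\|_\infty \|T_t - \id\|_{L^2_{P_\theta}}^2$. The DWS condition provides the $L^2_{P_\theta}$ decomposition $T_t - \id = t\,\Phi_\theta h + r_t$ with $\|r_t\|_{L^2_{P_\theta}} = o(t)$; this forces $\|T_t - \id\|_{L^2_{P_\theta}} = O(t)$ so that $R_t = O(t^2)$, and applied inside the linear term via Cauchy-Schwarz converts the right-hand side into $t \int (\nabla f)^{\top}(\Phi_\theta h)\,\diff P_\theta + o(t)$. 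Dividing by $t$ and sending $t \to 0$ yields the weak form of the continuity equation at $t=0$ with velocity $\Phi_\theta h$.

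Second I identify $\Phi_\theta h$ as a gradient. By Brenier's theorem, each $T_t$ is the gradient of a convex function $\psi_t$, so $T_t - \id = \nabla\bigl(\psi_t - \tfrac{1}{2}\|\cdot\|^2\bigr)$ is a classical gradient, and so is $t^{-1}(T_t - \id)$. The DWS condition says these gradients converge in $L^2_{P_\theta}$ to $\Phi_\theta h$, placing $\Phi_\theta h$ in the $L^2_{P_\theta}$-closure of $\{\nabla \varphi : \varphi \in C_c^\infty(\Rbb^d)\}$. By the standard Otto-calculus characterization recalled in Subsection~\ref{subsec:wasserstein}, this closure is exactly $\Tan_{P_\theta}^W(\Pcal_2(\Rbb^d))$, so $\Phi_\theta h$ admits a scalar potential $\phi$ with $\nabla \phi = \Phi_\theta h$ in $L^2_{P_\theta}$.

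The main subtlety is interpretive rather than computational: the potential $\phi$ generally exists only as an element of the $L^2_{P_\theta}$-closure of smooth gradients, not as a classical primitive of $\Phi_\theta h$. This is, however, the correct notion for downstream use, since every invocation of the potential in the paper---most importantly Theorem~\ref{thm:W-Chap-Robb-univar}---only pairs $\Phi_\theta h$ against gradients of test functions inside the $L^2_{P_\theta}$-inner product, where this level of regularity is sufficient.
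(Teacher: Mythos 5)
Your proposal is a genuine proof where the paper offers none: the paper states the lemma and attributes it entirely to \cite[Proposition~8.4.6]{AmbrosioGigliSavare} without unpacking the argument, so your explicit Taylor-expansion-plus-Brenier route is a useful and essentially faithful expansion of what that reference provides. Step~1 is clean: writing $\int f\,\diff P_{\theta+th} - \int f\,\diff P_\theta$ as $\int (f\circ T_t - f)\,\diff P_\theta$, second-order Taylor with a uniformly bounded Hessian, and the DWS bound $\|T_t-\id\|_{L^2_{P_\theta}}=O(t)$ correctly yield $\frac{\diff}{\diff t}\big|_{t=0}\int f\,\diff P_{\theta+th} = \int (\nabla f)^{\top}\Phi_\theta h\,\diff P_\theta$, which is the weak continuity equation at the initial time — the only time at which a conclusion can be drawn from DWS at $\theta$ alone, and indeed the only time the lemma addresses.

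There is, however, a gap in the logic of Step~2. You write that since $T_t-\id = \nabla\bigl(\psi_t - \tfrac{1}{2}\|\cdot\|^2\bigr)$ is a classical gradient, so is $t^{-1}(T_t-\id)$, and hence the $L^2_{P_\theta}$-limit $\Phi_\theta h$ lies in the $L^2_{P_\theta}$-closure of $\{\nabla\varphi:\varphi\in C_c^\infty(\Rbb^d)\}$. That inference is not automatic: being the (pointwise) gradient of a function on $\Rbb^d$ does not place a vector field in the closure of \emph{compactly supported} smooth gradients, and that closure is exactly what Otto calculus means by $\Tan_{P_\theta}^W$. The clean way to reach the conclusion is to bypass Brenier entirely at this stage: $t^{-1}(T_t-\id) = t^{-1}\bigl(\transport_{P_\theta\to P_{\theta+th}} - \id\bigr)$ is a scalar multiple of a transport displacement, so it lies in $\Tan_{P_\theta}^W(\Pcal_2(\Rbb^d))$ under the \emph{paper's own definition} of the tangent space as $\overline{\textnormal{span}\{\transport_{\mu\to\nu}-\id\}}$, and so does its $L^2_{P_\theta}$-limit $\Phi_\theta h$. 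If you do want to phrase this in terms of gradients — which Subsection~\ref{subsec:wasserstein} never actually states — you would then invoke \cite[Theorem~8.5.1]{AmbrosioGigliSavare} to identify the two closures; the nontrivial content there is precisely that Brenier potentials can be approximated in $L^2_{\mu}$-gradient by $C_c^\infty$ functions, which is the step your argument silently assumes.

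Your closing remark on the status of $\phi$ is fair and worth keeping: membership of $\Phi_\theta h$ in $\Tan_{P_\theta}^W$ does not, by itself, produce a literal scalar function $\phi$ with $\nabla\phi = \Phi_\theta h$ pointwise; one generally needs additional structure (e.g., a Poincar\'e inequality valid for $P_\theta$, or the connected-support hypotheses the paper imposes elsewhere) to pass from a convergent sequence of gradients to a convergent sequence of primitives. Since every downstream use in the paper — notably identifying $V_0 = \Phi_\theta h$ so that Theorem~\ref{thm:W-Chap-Robb-univar} applies along $\{P_{\theta+th}\}$ — only needs $\Phi_\theta h\in\Tan_{P_\theta}^W$ together with the Step~1 computation, this interpretive caveat does not affect the validity of the lemma's applications.
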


    \begin{figure}
        \centering

            \begin{tikzpicture}[scale=1.25]		
			
			
			\fill[use Hobby shortcut, closed=true, color=myblue, opacity=0.5, cm={-1, 0, 0, 1, (-7.5,0)}]
			(-5.5,0) .. (-3.5,-0.5) .. (-3.5,2) .. (-4.25,1) .. (-5.5,0);
			\node[black, above right] at (-5.5, 2) {$\textcolor{myblue}{\Theta}\subseteq\mathbb{R}^p$};
			
			\draw[thin,stealth-stealth] (-4,-1) -- ++(0, 3.5);
			\draw[thin,stealth-stealth] (-5.5,0) -- ++(4,0);
			
			\draw[very thick, color=mypurple,stealth-stealth] (-4.5, 0.25) -- (-2.5, 0.25);
			\draw[very thick, color=mypurple,stealth-stealth] (-3.75, -0.5) -- (-3.75, 1.5);
			
			\draw[very thick, color=red,-stealth] (-3.75, 0.25) -- ++(0.75,0);
			\draw[very thick, color=red,-stealth] (-3.75, 0.25) -- ++(0, 0.75);
			\node[red] at (-3.4, 0.9) {$h_1$};
			\node[red] at (-3.1, -0.15) {$h_2$};
			
			\filldraw[color=blue] (-3.75, 0.25) circle (2pt);
			\node[blue, above right] at (-4.05, 0.25) {$\theta$};

			\draw [thin, -stealth] (-1.5,0.5) to [out=20,in=160] (0.5,0.5);

			
			\coordinate (A) at (6, 0.1);
			\coordinate (B) at (2.7, -1.1);
			\coordinate (C) at (1, 0.85);
			\coordinate (D) at (4.6, 2.2);
			\fill[thin, opacity=0.5, color=myblue] (A) to[out=170,in=40] (B) to[out=120,in=350] (C) to[out=50,in=170] (D) to[out=330,in=120] (A);
			\node[black, above right] at (1, 2) {$\textcolor{myblue}{\Pcal}\subseteq\mathcal{P}_2(\mathbb{R}^d)$};
			
			\draw[very thick, color=mypurple, domain=2.25:4.25, smooth,stealth-stealth,cm={cos(-130) ,-sin(-130) ,sin(-130) ,cos(-130) ,(5.6125,-1.5)}] plot (\x,{0.25*(\x-3.5)^2});
			\draw[very thick, color=mypurple, domain=2.25:4.75, smooth,stealth-stealth] plot (\x,{2-exp(-0.5*(\x-3.5))});

			\filldraw[thin, opacity=0.75, color=red!30, xscale=-0.75, yscale=0.625,shift={(-6.125,1)}] (0, 0) to (2, -1) to (3,1) to (1, 2) to cycle;
			\draw[very thick, color=red,-stealth] (3.5, 1) -- ++(0.35, -0.45);
			\draw[very thick, color=red,-stealth] (3.5, 1) -- ++(0.5,0.25);
			\node[red] at (3.75, 1.5) {$\Phi_{\theta}h_1$};
			\node[red] at (3.25, 0.5) {$\Phi_{\theta}h_2$};
			
			\filldraw[color=blue] (3.5, 1) circle (2pt);
			\node[blue, right] at (2.8, 1.1) {$P_{\theta}$};
			
		\end{tikzpicture}
        \caption{A statistical model that is differentiable in the Wasserstein sense (DWS).
        The transport linearization $\Phi_{\theta}$ at $\theta\in\Theta$ is a linear operator that transforms increments $h$ at $\theta \in \Theta$ (left) into tangent vectors $\Phi_{\theta}h$ at $P_\theta$ for the corresponding path in the model $\Pcal=\{P_{\theta}:\theta\in\Theta\}$ (right). 
        The Wasserstein information matrix $J(\theta)$ gives rise to an inner product on the space of increments at $\theta$ (which is isomorphic to $\Rbb^p$), where the inner product between $h_1,h_2$ at $\theta\in\Theta$ (left) is set to be the $L^2_{P_\theta}(\Rbb^d;\Rbb^d)$ inner product between $\Phi_{\theta}h_1$ and $\Phi_{\theta}h_2$ (right).}        \label{fig:WassersteinScore}
    \end{figure}
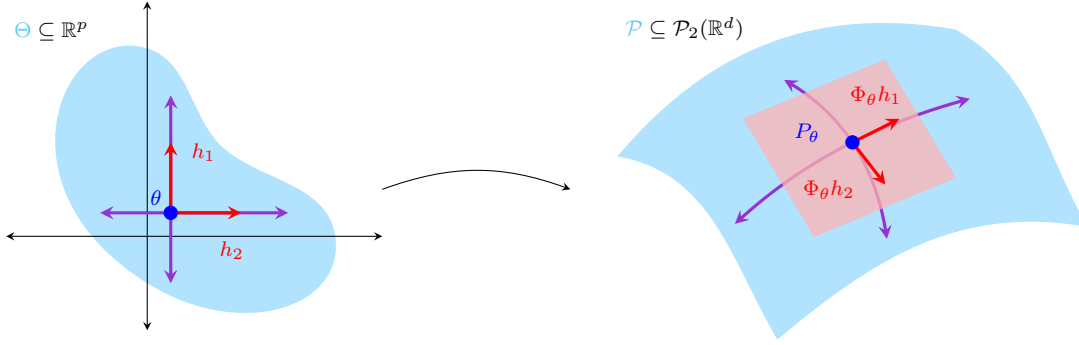
    
    Now we return to the statistical setting from the beginning of Section~\ref{subsec:sensitivity}.
    An \textit{estimator} is any measurable function $T_n:(\Rbb^d)^n\to \Rbb^k$, and an estimator is \textit{unbiased for $\chi(\theta)$} if we have $\Ebb_{\theta}[T_n(X_1,\ldots, X_n)] = \chi(\theta)$ for all $\theta\in\Theta$. 

    \begin{definition}\label{def:WIM}
        If $\Pcal$ is DWS at $\theta\in\Theta$ its \textit{Wasserstein information matrix} at $\theta$ is the matrix $J(\theta)\in\Rbb^{p\times p}$ defined via
        \begin{equation}\label{eqn:Wass-inf}
            J(\theta) := \Ebb_{\theta}\left[(\Phi_{\theta}(X))^{\top}\Phi_{\theta}(X)\right], \qquad \mbox{for each $\theta\in\Theta$.}
        \end{equation}
        
    \end{definition}

    \begin{remark}[geometric interpretation of transport linearization and Wasserstein information matrix]\label{rem:inf-matrix-comparison}
        The transport linearization and Wasserstein information matrix both have geometric interpretations which are analogous to the classical case, and which can be visualized in        Figure~\ref{fig:WassersteinScore}.
        First, the transport linearization $\Phi_{\theta}$ transforms increments at $\theta$ to tangent vectors at $P_{\theta}$ in $\Pcal$ when endowed with the Wasserstein geometry, whereas the classical score function (which may be regarded as a log-likelihood linearization) $G_{\theta}$ transforms increments at $\theta$ to tangent vectors at $P_{\theta}$ in $\Pcal$ when endowed with the Hellinger geometry.
        Second, note that the Wasserstein information matrix $J(\theta)$ gives rise to an inner product on increments at $\theta$, via
        \begin{equation*}
            h_1^{\top}J(\theta)h_2 = h_1^{\top}\Ebb_{\theta}\left[(\Phi_{\theta}(X))^{\top}\Phi_{\theta}(X)\right]h_2 = \Ebb_{\theta}\left[(\Phi_{\theta}h_1(X))^{\top}\Phi_{\theta}h_2(X)\right] = \langle \Phi_{\theta}h_1,\Phi_{\theta}h_2\rangle_{L^2_{P_{\theta}}(\Rbb^d;\Rbb^d)}
        \end{equation*}
        for all $h_1,h_2\in\Rbb^p$; as such the Wasserstein information matrix describes a geometry on $\Theta$ induced by the parameterization $\theta\mapsto P_{\theta}$ into the Wasserstein geometry, whereas the Fisher information matrix describes a geometry induced by the parameterization into the Fisher-Rao geometry \cite{Rao, Kass, Amari}.
        We direct the reader to \cite{AmariMatsuda,WIM} for further details on the Riemannian geometry of statistical models in the Wasserstein metric.
    \end{remark}
    
    Now we have the following fundamental result (proven in Appendix~\ref{app:proofs-bound}), which is a version of the Cram\'er-Rao lower bound for the Wasserstein geometry.

    \begin{theorem}[Wasserstein-Cram\'er-Rao lower bound]\label{thm:WCR}
        Suppose that  $\Pcal=\{P_{\theta}:\theta\in\Theta\}\subseteq\Pcal_{2,\textnormal{ac}}(\Rbb^d)$ is DWS, $\chi:\Theta\to\Rbb^k$ is differentiable and $T_n:(\Rbb^d)^n\to\Rbb^k$ is an unbiased estimator of $\chi(\theta)$ for all $\theta\in\Theta$.
        Also suppose:
        \begin{itemize}
            \item[(i)] The function $J:\Theta\to\Rbb^{p\times p}$ is  continuous, and the matrix $J(\theta)\in\Rbb^{p\times p}$ is invertible for all $\theta\in\Theta$.
            \item[(ii)] The function $D\chi:\Theta\to\Rbb^{p\times k}$ is continuous, and the matrix $D\chi(\theta)\in\Rbb^{p\times k}$ is injective for all $\theta\in\Theta$.
            \item[(iii)] The function $\theta\in\Theta\mapsto \Cos_{\theta}(T_n)\in\Rbb^{k\times k}$ is continuous.

            \item[(iv)] The function $T_n:(\Rbb^d)^n\to\Rbb^k$ is locally Lipschitz, and $\theta\mapsto \Ebb_\theta \left[\|\nabla_{x_i}T_n(X_1,\ldots, X_n)\|^2\right]$ is locally integrable for all $1\le i\le n$, meaning
            for all $\theta\in\Theta$ there exists an open neighborhood $U\subseteq\Theta$ of $\theta$ satisfying
            \begin{equation*}
                \int_{U}\Ebb_{\theta}\left[\|\nabla_{x_i}T_n(X_1,\ldots, X_n)\|^2\right]\diff \theta < \infty, \qquad \;\; \mbox{for all $1\le i\le n$.}
            \end{equation*}
        \end{itemize}
        Then, we have
        \begin{equation}\label{eqn:WCR}
            \Cos_{\theta}(T_n) \succeq \frac{1}{n}(D\chi(\theta))^{\top}(J(\theta))^{-1}D\chi(\theta), \qquad \quad \mbox{for all $\theta\in\Theta$}.
        \end{equation}
    \end{theorem}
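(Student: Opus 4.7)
The plan is to reduce the matrix Wasserstein--Cram\'er--Rao inequality to its scalar counterpart, already established in Theorem~\ref{thm:W-Chap-Robb-univar}, by evaluating the latter along a one-parameter family of probability measures through the point $\theta\in\Theta$ and then optimizing over the direction of perturbation. More concretely, fix $\theta_0\in\Theta$ and pick arbitrary $a\in\Rbb^k$ and $h\in\Rbb^p$. I would introduce the scalar estimator $T_n^a := a^{\top}T_n$, which is unbiased for $\chi_a(\theta):= a^{\top}\chi(\theta)$, and consider the path $t\mapsto P_{\theta_0+th}$ on a small interval around $t=0$. By the DWS hypothesis and Lemma~\ref{lem:DWS-potential} applied at each time, this path satisfies the continuity equation with time-varying vector field $V_t = \Phi_{\theta_0+th}\,h$, and in particular $\Ebb_{P_t}[\|V_t\|^2] = h^{\top}J(\theta_0+th)\,h$.

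Applying Theorem~\ref{thm:W-Chap-Robb-univar} to $T_n^a$ along this path yields, for Lebesgue almost every $t$ in the interval,
\begin{equation*}
a^{\top}\Cos_{\theta_0+th}(T_n)\,a \ \ge\ \frac{\bigl(a^{\top}D\chi(\theta_0+th)^{\top}h\bigr)^{2}}{n\,h^{\top}J(\theta_0+th)\,h},
\end{equation*}
where I have used that $\Sen_{P_t}(T_n^a) = a^{\top}\Cos_{P_t}(T_n)\,a$ and that $\tfrac{d}{dt}\Ebb_{P_t}[T_n^a] = a^{\top}D\chi(\theta_0+th)^{\top}h$ by unbiasedness combined with differentiability of $\chi$. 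The hypotheses of Theorem~\ref{thm:W-Chap-Robb-univar} must of course be verified: the local Lipschitz assumption is inherited directly from (iv), the integrability $\Ebb_{P_t}|T_n^a|<\infty$ follows from unbiasedness, and the squared-gradient integrability along the one-dimensional path follows from the local integrability in (iv) via a Fubini argument (the function $\theta\mapsto \Ebb_\theta[\|\nabla_{x_i}T_n^a\|^2]$ is locally $L^1$ on $\Theta$, so for almost every direction $h$ its restriction to a line through $\theta_0$ is locally $L^1$ on $\Rbb$).

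The third step is to pass from this almost-everywhere statement to the inequality at $t=0$, which is possible precisely because each of the three quantities above is continuous in $t$: the left-hand side by assumption (iii), the numerator by assumption (ii), and the denominator by assumption (i). This yields
\begin{equation*}
a^{\top}\Cos_{\theta_0}(T_n)\,a \ \ge\ \frac{\bigl(a^{\top}D\chi(\theta_0)^{\top}h\bigr)^{2}}{n\,h^{\top}J(\theta_0)\,h}
\end{equation*}
for every $h\in\Rbb^p\setminus\{0\}$, where invertibility of $J(\theta_0)$ from (i) guarantees the denominator is positive. Optimizing over $h$ via the standard variational identity $\sup_{h\neq 0}(c^{\top}h)^{2}/(h^{\top}J h) = c^{\top}J^{-1}c$ with $c := D\chi(\theta_0)\,a$ produces $a^{\top}\Cos_{\theta_0}(T_n)\,a \ge \tfrac{1}{n}\,a^{\top}(D\chi(\theta_0))^{\top}(J(\theta_0))^{-1}D\chi(\theta_0)\,a$, and since $a\in\Rbb^k$ and $\theta_0\in\Theta$ are arbitrary, the matrix inequality \eqref{eqn:WCR} follows.

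The routine steps are the scalarization, the computation of the two sides of the inequality, and the final variational optimization. The main obstacle, and where conditions (i)--(iv) all play their role, is the justification of the limit $t\to 0$: specifically, verifying that (iv) yields line-integrability of the squared gradients (so that Theorem~\ref{thm:W-Chap-Robb-univar} may even be invoked along the chosen path), and then combining continuity of $\Cos_\theta(T_n)$, $D\chi(\theta)$, and $J(\theta)$ to promote the ``almost every $t$'' conclusion to a pointwise inequality at $t=0$.
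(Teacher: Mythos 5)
Your proposal follows the same high-level route as the paper: scalarize the estimator, run the scalar bound (Theorem~\ref{thm:W-Chap-Robb-univar}) along a one-parameter path $\{P_{\theta_0+th}\}$, and pass from ``a.e.~$t$'' to ``$t=0$'' using the continuity hypotheses (i)--(iii). The intermediate calculations (the vector field $V_t = \Phi_{\theta_0+th}h$ via Lemma~\ref{lem:DWS-potential}, the identity $\Sen_{P_t}(a^\top T_n) = a^\top\Cos_{P_t}(T_n)a$, and differentiating the unbiasedness constraint) are exactly those in the paper's proof.

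Where you diverge is in the final step. The paper fixes $\lambda\in\Rbb^k$ and uses the \emph{single} direction $h = (J(\theta))^{-1}D\chi(\theta)\lambda$, for which the limit as $t\to 0$ directly produces the desired constant with no variational optimization. The cost is an explicit measure-theoretic bookkeeping step: the paper introduces the set $B_\theta$ of line-integrable directions, proves $B_r(0)\setminus B_\theta$ is Lebesgue-null, pulls this null set back through the \emph{injective} linear map $L_\theta\lambda = (J(\theta))^{-1}D\chi(\theta)\lambda$ (this is where the injectivity of $D\chi(\theta)$ from hypothesis (ii) is genuinely used), and finishes by density and continuity of quadratic forms in $\lambda$. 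You instead leave $h$ free and close with the variational identity $\sup_h(c^\top h)^2/(h^\top J h)=c^\top J^{-1}c$. This is a clean variation that, incidentally, has no need for the $L_\theta$-injectivity argument, so it does not lean on the injectivity part of (ii) in the way the paper does.

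However, your argument has a gap precisely where the paper's $B_\theta$ bookkeeping sits. Your Fubini step only yields the line-integrability hypothesis of Theorem~\ref{thm:W-Chap-Robb-univar} for \emph{almost every} direction $h$, yet you then assert the resulting inequality ``for every $h\in\Rbb^p\setminus\{0\}$'' and immediately take the supremum. Since the maximizer $h^*=J(\theta_0)^{-1}D\chi(\theta_0)a$ may itself lie in the Lebesgue-null exceptional set, the variational step is not literally justified as written. The repair is easy and you should spell it out: for fixed $a$ and $\theta_0$ the left-hand side is constant in $h$ and the right-hand side is continuous on $\Rbb^p\setminus\{0\}$ (using $J(\theta_0)\succ 0$), so the inequality on a full-measure (hence dense) set of directions extends to all $h$; equivalently, the supremum of a continuous function over a dense subset of the unit sphere equals the unrestricted supremum $c^\top J^{-1}c$. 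This plays the same role as the paper's ``$C_\theta$ is dense, hence $C_\theta=\Rbb^k$'' step, just phrased in $h$-space rather than $\lambda$-space.
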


    Assumptions $(i)$, $(ii)$, and $(iii)$ in this result are analogous to the assumptions often required in the Cram\'er-Rao lower bound, but one may wonder why assumption $(iv)$ appears to be more technical.
    The answer is that standard statements of Cram\'er-Rao (e.g., \cite[Section~4.5]{Keener}) directly assume that one can exchange differentiation and integration in suitable expressions; in geometric terms, this amounts to the a priori assumption that the reaction equation may be applied to the estimator of interest.
    Contrarily, our goal in this work is to present first-principle conditions on the model and the estimator under which the fundamental lower bounds hold.
    We expect that, if this perspective was adopted in classical works on the Cram\'er-Rao lower bound, then similar technical conditions would be required.
    See also Remark~\ref{rem:loc-Lip}.
    
    Our next goal, and the focus of the remainder of the paper, is to understand which estimators achieve the lower bound of Theorem~\ref{thm:WCR}, either exactly or asymptotically.

    \section{Exact Sensitivity-Efficiency in Transport Families}\label{sec:exact}

    Our next goal is to understand in which models $\Pcal = \{P_{\theta}: \theta\in\Theta\}$ one can construct an unbiased estimator of a given estimand $\chi(\theta)$ that achieves equality in the Wasserstein-Cram\'er-Rao bound (Theorem~\ref{thm:WCR}).    
    This is analogous to the construction of exponential families in the classical case, since those are exactly the models in which one can achieve equality in the Cram\'er-Rao lower bound \cite{Wijsman, Joshi}.

    The recent work \cite[Theorem~1]{WCR_achieve} provides a characterization of \textit{functions} which achieve equality in the Wasserstein-Cram\'er-Rao bound in an arbitrary model, but our goal is statistical in that we seek a characterization of \textit{models} in which some bona fide \textit{estimator} achieves equality in the Wasserstein-Cram\'er-Rao bound.
    Here, we draw a distinction between functions (which may depend on the parameter $\theta$) and estimators (which depend only on the samples $X_1,\ldots, X_n$ from $P_{\theta}$).
    
    We give a notion of efficiency for sensitivity, regarding equality in the Wasserstein-Cram\'er-Rao bound.
    
    \begin{definition}\label{def:sens-eff}
        Suppose that $\Pcal$ is DWS, that $J(\theta)$ is invertible for each $\theta\in\Theta$, and that $T_n:(\Rbb^d)^n\to \Rbb^k$ is an unbiased estimator of $\chi(\theta)$, for some differentiable function $\chi:\Theta\to\Rbb^k$.
        We say that $T_n$ is \textit{sensitivity-efficient} if we have
        \begin{equation}\label{eqn:sens-eff}
            \Cos_{\theta}(T_n) = \frac{1}{n}(D\chi(\theta))^{\top}(J(\theta))^{-1}D\chi(\theta), \qquad \mbox{for all $\theta\in\Theta$.}
        \end{equation}
        
    \end{definition}

    \begin{remark}[variance-efficiency vs.~sensitivity-efficiency]
        When there is no risk of ambiguity, we will simply write ``efficient'' to mean ``sensitivity-efficient'', but this differs from the usual terminology, whereby ``efficient'' means ``variance-efficient''.
    \end{remark}

    Our main result will characterize the estimands for which unbiased sensitivity-efficient estimation is possible.
    For now, an elementary observation towards this end is the following:
    If a DWS model $\Pcal$ admits an unbiased sensitivity-efficient estimator $T_n$ of $\chi(\theta)$ and if $A\in\Rbb^{k\times k}$ and $v\in\Rbb^k$ are arbitrary, then $AT_n+v$ is an efficient unbiased estimator of $A\chi(\theta) + v$.

    Now we introduce the following distinguished type of statistical model; recall~\eqref{eqn:DWS} where we defined $\Phi_{\theta}$.
    \begin{definition}\label{def:W-fam}
        For a locally Lipschitz function $\phi:\Rbb^d\to\Rbb^k$ and a differentiable function $\chi:\Theta\to\Rbb^k$, a \textit{transport family} is a DWS model $\Pcal = \{P_{\theta}: \theta\in\Theta\}\subseteq\Pcal_{2,\mathrm{ac}}(\Rbb^d)$ for open $\Theta\subseteq\Rbb^p$  if we have
        \begin{equation}\label{eqn:W-fam}
            \Phi_{\theta}(x) = D\phi(x)(\Lambda(\theta))^{-1}(D\chi(\theta))^{\top}
        \end{equation}
        for Lebesgue almost every $x\in\Rbb^d$, for all $\theta\in\Theta$, and where $\Lambda(\theta):=\Ebb_{\theta}[(D\phi(X))^{\top}D\phi(X)]$ is assumed to be invertible.
        The function $\phi$ is called the \textit{potential}, and the function $\chi$ is called the \textit{parameterization}.
        We say that $\Pcal$ is in \textit{natural parameterization} if $\chi(\theta) = \theta$ for all $\theta\in\Theta$.
    \end{definition}

    Next, we point out some similarities and differences between transport families and exponential families.

    \begin{remark}[transport families vs.~exponential families]\label{rem:exp-v-transport}
        In order to see the analogy between transport families and exponential families, let us consider an exponential family $\Pcal:=\{P_{\theta}:\theta\in\Theta\}$ with parameterization $\eta:\Theta\to\Rbb^k$ and sufficient statistic $\phi:\Rbb^d\to\Rbb^k$, and expectation $\chi(\theta)=\Ebb_{\theta}[\phi(X)]$:
        \begin{equation*}
            G_{\theta}(x) = D\eta(\theta)(\phi(x) -\chi(\theta)) = D\eta(\theta)\phi(x) - \nabla \Lambda(\theta)
        \end{equation*}
        where $\Lambda(\theta) = \log\int\exp((\eta(\theta))^{\top}\phi(x))\diff \lambda(x)$ and $\lambda$ is a dominating measure for $\Pcal$.
        We chose our notation to highlight, as much as possible, the analogy between these types of models, namely that the potential is analogous to the sufficient statistic, there is some normalization required via $\Lambda$, and the parameterization and the statistic interact only linearly.
        However, the analogy fails in some ways.
        For example, in contrast to transport families, the expectation $\chi$ for the natural statistic in an exponential family does not coincide with the parameterization $\eta$, but rather they are related via $D\eta(\theta)\chi(\theta) = D\eta(\theta)\Ebb_{\theta}[\phi(X)] = \nabla \Lambda(\theta)$.
        Another example is that the regularity required from the natural statistics are different in each case; in a transport family we require $\phi$ to be differentiable while in an exponential family we require $\phi$ only to be measurable.
    \end{remark}

    \begin{remark}[no explicit formula for transport families]\label{rem:no-explicit-form}
        Exponential families may equivalently be defined implicitly in terms of the score function or through an explicit description of the density, with respect to a common reference measure.
        In the case of transport families, we have an implicit definition in terms of the Wasserstein linearization, but we do not have any explicit formula for the distribution except in the case of $p=1$. (See Example~\ref{eqn:W-fam-from-ODE}.)
    \end{remark}

     It is also useful throughout the paper to keep in mind the special case of $p=k=1$, in which case the definition of a transport family simply states that the transport linearization can be written as
    \begin{equation*}
        \Phi_{\theta}(x) = \frac{\chi'(\theta)}{\Lambda(\theta)}\nabla\phi(x),
    \end{equation*}
    as we discussed in the Introduction.

    Our main result in this section (proven in Appendix~\ref{app:proofs-exact})  is the following, which, in spirit, shows that transport families are precisely the models in which unbiased sensitivity-efficient estimation is possible.
    Along the way we utilize some general calculations in transport families, which are developed in Lemma~\ref{lem:W-fam-properties}.

    \begin{theorem}\label{thm:efficiency-W-fam}
        Suppose $\Pcal=\{P_{\theta}:\theta\in\Theta\}\subseteq\Pcal_{2,\textnormal{ac}}(\Rbb^d)$ is DWS, that $J(\theta)$ is invertible for all $\theta\in\Theta$, and that $\chi:\Theta\to\Rbb^k$ is a differentiable function such that $D\chi(\theta)\in\Rbb^{p\times k}$ is injective for all $\theta\in\Theta$.
        Then:
        \begin{itemize}
            \item[(i)] If $\Pcal$ is a transport family with potential $\phi$, parameterization $\chi$, and connected $\Theta$, and if $\Ebb_{\theta}[\|\phi(X)\|]<\infty$ for all $\theta\in\Theta$, $\theta \mapsto \Ebb_\theta[\phi(X)]$ is continuous, and $\theta\mapsto \Ebb_{\theta}[\|D\phi(X)\|^2]$ is locally Lebesgue integrable, then the estimator $T_n:(\Rbb^d)^n\to\Rbb^k$ defined via
            \begin{equation}\label{eqn:W-fam-estimator}
                T_n(X_1,\ldots, X_n) = \frac{1}{n}\sum_{i=1}^{n}\phi(X_i)
            \end{equation}
            is an unbiased sensitivity-efficient estimator of $\chi(\theta)+v$ for all $\theta\in\Theta$, for some $v\in\Rbb^k$.
            \item[(ii)] If there exists an unbiased sensitivity-efficient estimator $T_n$ of $\chi(\theta)$, then $\Pcal$ is a transport family with potential $\phi=T_1$ and parameterization $\chi$.
        \end{itemize}        
    \end{theorem}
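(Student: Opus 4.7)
The plan is to reduce exact sensitivity-efficiency to the equality case of a matrix Cauchy-Schwarz inequality that ties $DT_n$ to the tensorized transport linearization $\Phi_\theta^{\otimes n}$, using the continuity equation to supply the linear identity $D\chi(\theta) = \Ebb_\theta[(\Phi_\theta^{\otimes n})^\top DT_n]$, in parallel with the Fisher-Rao theory where efficiency corresponds to proportionality with the score.

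For part (i), starting from a transport family with $T_n = n^{-1}\sum_i \phi(X_i)$, I would first establish unbiasedness by applying Proposition~\ref{prop:ContinuityEqnLocallyLipschitz} to each component of $\phi$ along the DWS path $\{P_{\theta+sh}\}_{0\le s\le 1}$; its vector field at time $s$ is $\Phi_{\theta+sh}h$ by Lemma~\ref{lem:DWS-potential}, and substituting the factorization~\eqref{eqn:W-fam} yields
\[\frac{\diff}{\diff s}\Ebb_{\theta+sh}[\phi(X)] = \Ebb_{\theta+sh}[(D\phi)^\top D\phi]\,(\Lambda(\theta+sh))^{-1}(D\chi(\theta+sh))^\top h = (D\chi(\theta+sh))^\top h,\]
whose integration in $s$, combined with connectedness of $\Theta$, gives $\Ebb_\theta[\phi(X)] = \chi(\theta) + v$ for some $v \in \Rbb^k$. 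Efficiency then follows from a direct chain-rule calculation giving $\Cos_\theta(T_n) = n^{-1}\Lambda(\theta)$, coupled with the identity $J(\theta) = D\chi(\theta)(\Lambda(\theta))^{-1}(D\chi(\theta))^\top$ obtained by substituting~\eqref{eqn:W-fam} into Definition~\ref{def:WIM}; the hypotheses that $J$ is invertible and $D\chi$ is injective together force $D\chi$ to be square invertible, after which $(D\chi)^\top J^{-1} D\chi = \Lambda$ by direct manipulation and equality~\eqref{eqn:sens-eff} holds.

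For part (ii), I would tensorize the continuity equation along $\{P_{\theta+th}^{\otimes n}\}_{0\le t\le 1}$ exactly as in the proof of Theorem~\ref{thm:W-Chap-Robb-univar} to derive $D\chi(\theta) = \Ebb_\theta[(\Phi_\theta^{\otimes n})^\top DT_n]$. The efficiency hypothesis then gives equality in the matrix Cauchy-Schwarz inequality applied with $A = DT_n$ and $B = \Phi_\theta^{\otimes n}$ as $L^2$-random matrices under $P_\theta^{\otimes n}$, which forces the pointwise identity $DT_n(x_1,\ldots,x_n) = n^{-1}\Phi_\theta^{\otimes n}(x_1,\ldots,x_n)\,J^{-1}(\theta)D\chi(\theta)$ holding $P_\theta^{\otimes n}$-almost surely. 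Reading off the $i$-th block of this identity first shows $D_{x_i}T_n$ depends only on $x_i$, so $T_n$ decomposes as a sum of per-sample functions whose common summand is identifiable as $T_1$, and second allows inversion for $\Phi_\theta$ to yield the transport factorization $\Phi_\theta(x) = DT_1(x)\,(\Lambda(\theta))^{-1}(D\chi(\theta))^\top$ with $\Lambda(\theta) = \Ebb_\theta[(DT_1)^\top DT_1]$.

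The main technical obstacle is in part (ii): the pointwise identity is derived only $P_\theta^{\otimes n}$-almost surely, but Definition~\ref{def:W-fam} requires the transport factorization to hold Lebesgue-almost surely on $\Rbb^d$. I would resolve this by using the absolute continuity $P_\theta \ll$ Lebesgue to upgrade to Lebesgue-a.e.\ on the support of $P_\theta$, and then exploiting the $\theta$-independence of $DT_n$ to match $\Phi_\theta(x)J^{-1}(\theta)D\chi(\theta)$ across values of $\theta$ whose supports contain $x$, thereby extending the identity to all relevant points. Secondary, more routine obstacles include verifying the hypotheses of Proposition~\ref{prop:ContinuityEqnLocallyLipschitz} for $\phi$ (using the assumed local integrability of $\theta \mapsto \Ebb_\theta[\|D\phi\|^2]$) and for $T_n$ (using the local Lipschitz and moment conditions inherited from Theorem~\ref{thm:WCR}), as well as justifying the tensorization of the continuity equation in analogy with the proof of Theorem~\ref{thm:W-Chap-Robb-univar}.
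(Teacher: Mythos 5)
Your plan recovers the same chain of ideas — tensorize the continuity equation, use the linear identity $D\chi(\theta)=\Ebb_\theta[(\Phi_\theta^{\otimes n})^{\top}DT_n]$, and force equality in Cauchy--Schwarz to extract the transport factorization — but the execution differs from the paper in two places, one of which is a genuine simplification and the other of which leaves a gap.

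\emph{Where you diverge productively.} For part (ii) the paper projects onto scalar directions $\lambda$, deduces from the equality case of the scalar Cauchy--Schwarz inequality only the qualitative statement $DT_1(x)A(\theta)=\Phi_\theta(x)$ for an undetermined $A(\theta)\in\Rbb^{k\times p}$, and then solves for $A(\theta)$ using the value of $\Cos_\theta(T_1)$. Your matrix Cauchy--Schwarz formulation (equality iff $A=B(\Ebb[B^{\top}B])^{-1}\Ebb[B^{\top}A]$ a.s.) yields the explicit relation $DT_n=n^{-1}\Phi_\theta^{\otimes n}J^{-1}(\theta)D\chi(\theta)$ directly, without solving for an intermediate matrix. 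Substituting $\Phi_\theta=D\phi\,\Lambda^{-1}D\chi^{\top}$ and $J=D\chi\,\Lambda^{-1}D\chi^{\top}$ shows the two are the same identity, so this is a legitimate, slightly cleaner route to the same endpoint.

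\emph{Where there is a gap.} You keep $n$ general in part (ii), whereas the paper fixes $n=1$ at the outset and sets $\phi:=T_1$. With general $n$ your block-reading argument does show $D_{x_i}T_n$ depends only on $x_i$, hence $T_n=n^{-1}\sum_i\phi(X_i)+C$ for \emph{some} $\phi$ with $D\phi(x)=\Phi_\theta(x)J^{-1}(\theta)D\chi(\theta)$; but you then assert the summand ``is identifiable as $T_1$'' without an argument, and this does not follow from the hypothesis that a \emph{single} estimator $T_n$ (for one fixed $n$) is sensitivity-efficient. You need either to specialize to $n=1$ from the start (as the paper does) or to assume sensitivity-efficiency for $n=1$ as well. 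A second, smaller issue: the ``technical obstacle'' you flag (a.s.\ under $P_\theta^{\otimes n}$ vs.\ Lebesgue-a.e.) is not actually patched by $P_\theta\ll\text{Lebesgue}$, which goes the wrong direction — you would need the density of $P_\theta$ to be positive Lebesgue-a.e.\ on its support. In fact the paper's own proof concludes with ``almost surely under $P_\theta$'' and treats this as sufficient, since $\Phi_\theta$ is itself only determined up to $P_\theta$-null sets by the DWS definition; so the obstacle you raise is more definitional than real, and the proposed cross-$\theta$ matching argument is unnecessary and, as stated, would not work when supports do not overlap. Part (i) of your proof is essentially the paper's, including the integration of $\tfrac{\diff}{\diff s}\Ebb_{\theta+sh}[\phi(X)]=(D\chi(\theta+sh))^{\top}h$ on a connected $\Theta$ (the content of Lemma~\ref{lem:W-fam-properties}(iii)) and the direct cosensitivity computation $\Cos_\theta(T_n)=n^{-1}\Lambda(\theta)$; your observation that $J$ invertible plus $D\chi$ injective forces $p=k$ is correct for transport families, though the paper instead uses a left inverse of $D\chi$, which makes the algebra more self-contained.
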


    Note that claim $(i)$ requires more technical assumptions than claim $(ii)$; fundamentally, this is because an arbitrary unbiased sensitivity-efficient estimator need not have enough a priori regularity for the continuity equation to apply.

    A practical consequence of Theorem~\ref{thm:efficiency-W-fam} is that we can easily find sensitivity-efficient estimators  if we can recognize a given model as a transport family.
    This allows us to analyze our first statistical applications.
    
    \begin{example}[location family]\label{ex:loc-fam}
        Suppose we want to estimate an unknown parameter $\theta\in\Rbb^d$ from the samples $X_1,\ldots, X_n$, defined via $X_i:=\theta + \varepsilon_i$ for $1\le i \le n$, where $\varepsilon_1,\ldots, \varepsilon_n$ are i.i.d. samples from some mean-zero distribution $P_0$ on $\Rbb^d$ with a Lebesgue density, and satisfying $\int_{\Rbb^d}\|x\|^2\diff P_0(x)<\infty$.
        
        To determine an unbiased sensitivity-efficient estimator, let us cast this model in appropriate terms, by writing $P_{\theta}:=P_0(\,\cdot\,-\theta)$ for $\theta\in\Rbb^d$, so $p=k=d$ and $\Theta = \Rbb^d$.
        Then, note that the optimal transport maps between elements of this family are just translations $\transport_{P_{\theta_0}\to P_{\theta_1}}(x) = x + \theta_1-\theta_0$, so in particular we have $\Phi_{\theta}(x) = \idmat_d$.
        This shows that $\Pcal = \{P_{\theta}:\theta\in\Rbb^d\}$ is a transport family with natural parameterization $\chi(\theta) = \theta$ and potential $\phi:\Rbb^d\to\Rbb^d$ satisfying $D\phi(x) = \idmat_d$ for all $x\in\Rbb^d$; in other words, we may take $\phi(x) = x$.
        Therefore, we have  $J(\theta) = \idmat_d$ for all $\theta\in\Rbb^d$, and Theorem~\ref{thm:efficiency-W-fam} shows that the sample mean $T_n(X_1,\ldots, X_n):= \frac{1}{n}\sum_{i=1}^{n}X_i$ is an efficient unbiased estimator of $\theta$.
        (Of course, one can also check the efficiency of $T_n$ directly.)
    \end{example}

    \begin{example}[scale family]\label{ex:var-fam}
        Suppose we want to estimate an unknown parameter $\theta>0$ from samples $X_1,\ldots, X_n$ which are i.i.d. from $P_{\theta}$ defined via $P_{\theta}(A)=P_1(\theta^{-1}{A})$  for $A \subseteq \Rbb^d$ Borel, where $P_1$ is some distribution on $\Rbb^d$ with density with respect to the Lebesgue measure and which satisfies $\int_{\Rbb^d}\|x\|^2\diff P_1(x) = 1$.
    
        To determine a sensitivity-efficient estimator, we cast this model into the terms above by writing $p=k=1$ and $\Theta = (0,\infty)$.
        The optimal transport maps between these probability measures are $\transport_{P_{\theta_0}\to P_{\theta_1}}(x) = (\theta_1/\theta_0)x$; hence we can compute the transport linearization to be $\Phi_{\theta}(x) = x/\theta$.
        The dependence on $x$ suggests that we can take $\nabla \phi(x) = x$, i.e., $\phi(x) = \frac{1}{2}\|x\|^2$.
        Thus, we compute $\Lambda(\theta) = \Ebb_{\theta}[X^{\top} X] = \theta^2$ and this forces $\chi'(\theta) = \theta$.
        Summarizing, we have shown that $\Pcal = \{P_\theta: \theta > 0\}$ is a transport family with potential $\frac{1}{2}\|x\|^2$ and parameterization $\frac{1}{2}\theta^2$, so it follows that the Wasserstein information is $J(\theta) = 1$, and Theorem~\ref{thm:efficiency-W-fam} implies (after absorbing constant factors) that $T_n(X_1,\ldots, X_n):=\frac{1}{n}\sum_{i=1}^{n} \|X_i\|^2$ is an efficient unbiased estimator of $\chi(\theta) = \theta^2$.
        In other words, the natural parameter that can be efficiently estimated in an unbiased way is not the scale but the \textit{square} of the scale.
    \end{example}

    It is illustrative to compare both of the preceding examples with their classical counterparts.
    On the one hand, note that the regularity conditions required for variance-efficient estimation in both location families and scale families concern the existence of differentiable densities satisfying some integrability condition;
    on the other hand, the regularity conditions required for sensitivity-efficient estimation concern only the existence of a density with respect to the Lebesgue measure and the existence of moments.
    As in Remark~\ref{rem:DWS-v-DQM-exs}, neither set of condition implies the other.
    Also, we note that the preceding examples show, in fact, that the sample mean is simultaneously sensitivity-efficient for all $P_0$ (in location models), and  that the sample mean of the squared norms is simultaneously sensitivity-efficient for all $P_1$ (scale models). Contrast this with classical theory where the variance-efficient estimator in a location (or scale) family depends on $P_0$ (or $P_1$).    

    \begin{example}[Pareto family]\label{ex:pareto}
        Suppose we want to estimate a parameter $\theta>0$ from samples $X_1,\ldots, X_n$ which are i.i.d. from a distribution $P_{\theta}$ with density $p_{\theta}(x) = \theta x^{-(\theta+1)}$ for $x\ge 1$; in other words, we aim to estimate the \textit{index} $\theta$ from samples in a Pareto family.

        In order to determine an efficient unbiased estimator for some transformation of $\theta$, we set $p=k=d=1$ and take $\Theta =(2,\infty)$.
        Since the distribution function and quantile function of $P_{\theta}$ are respectively $F_{\theta}(x) = 1-x^{-\theta}$ and $F_{\theta}^{-1}(u) = (1-u)^{-1/\theta}$, we can compute the transport linearization to be $\Phi_{\theta}(x) = -x\log x/\theta$.
        This suggests that $\Pcal$ is a transport family with potential $\phi:\Rbb\to\Rbb$ satisfying $\phi'(x) = x\log x$; in other words, we may take $\phi(x) := (1/2)x^2\log x - (1/4)x^2$.
        Then some calculus yields $\chi'(\theta)/\Lambda(\theta) = -1/\theta$ hence $\chi(\theta) = (\theta-2)^{-2}$.
        Therefore, Theorem~\ref{thm:efficiency-W-fam} implies that
        \begin{equation*}
            T_n(X_1,\ldots, X_n) := \frac{1}{n}\sum_{i=1}^{n}\left(\frac{1}{2}X_i^2\log X_i - \frac{1}{4}X_i^2\right)
        \end{equation*}
        is an unbiased sensitivity-efficient estimator of $\chi(\theta).$
    \end{example}


    \begin{example}[Gaussian family]\label{ex:Gaussian}
        Suppose we want to estimate both the mean $\mu\in\Rbb$ and variance $\sigma^2>0$ of a Gaussian distribution, where we observe i.i.d. samples $X_1,\ldots, X_n$ from $\Ncal(\mu,\sigma^2)$.

        To write this in the usual form, we set $\theta = (\mu,\sigma^2)$ and $P_{\theta}:=\Ncal(\mu,\sigma^2)$ so that $\Theta \subseteq\Rbb^2$, meaning we have $p=k=2$ and $d=1$.
        We easily compute the transport linearization for this family to be $\Phi_\theta(x) = \left(\begin{matrix} 1 & x  \end{matrix}\right)$; hence $\{ P_{\theta} \: : \: \theta \in \Theta \}$ is a transport family.
        Finally, we can compute the Wasserstein information matrix to be
        \begin{equation*}
            J(\theta) = \begin{pmatrix}
                1 & \mu \\ \mu & \sigma^2 + \mu^2
            \end{pmatrix}, \qquad\textnormal{and}\qquad T_n(X_1, \dots, X_n) = \begin{pmatrix}
                \frac{1}{n}\sum_{i=1}^n X_i \\\,\\ \frac{1}{n}\sum_{i=1}^n X_i^2 
            \end{pmatrix}
        \end{equation*}
        is sensitivity-efficient for $\chi(\theta):=(\mu, \sigma^2 + \mu^2 )^{\top}$.
        In other words, the natural estimand is not the pair of mean and variance, but rather the pair of first and second moments; in fact, there is no unbiased sensitivity-efficient estimator of $\sigma^2$.
        (However, we will see in Example~\ref{ex:var-unknown-mean} that there is indeed an unbiased \textit{asymptotically} sensitivity-efficient estimator of $\sigma^2$.)
    \end{example}

    \begin{example}[linear regression]\label{ex:regression}
        Suppose we want to estimate an unknown parameter $\theta\in \Theta = \Rbb^p$ from the observations $X_1,\ldots, X_n$, where $X_i:=W_i^{\top}\theta + \varepsilon_i$ and $W_1,\ldots, W_n$ are fixed and known covariates and $\varepsilon_1,\ldots, \varepsilon_n$ are i.i.d.~errors (unobserved) from some mean-zero distribution $P_0$ on $\Rbb$ with Lebesgue density and finite variance.
        In other words, we are interested in estimation of the coefficient vector in a fixed-design linear regression.
        Although this example is not precisely of the form previously discussed, a Wasserstein-Cram\'er-Rao lower bound still holds because of Remark~\ref{rem:indep-not-iid}:
        If we write $\bW$ for the usual $n\times p$ design matrix and assume that $\bW^{\top}\bW$ is invertible, then we have $\Cos_{\theta}(T_n) \succeq (\bW^{\top}\bW)^{-1}$ for any unbiased estimator $T_n:\Rbb^n\to\Rbb^p$ of $\theta$. 
        Moreover, writing $X = (X_1,\ldots, X_n)^{\top}$, it can be directly checked that the ordinary least-squares (OLS) estimator $T_n^{\OLS}(X_1,\ldots, X_n) = (\bW^{\top}\bW)^{-1}\bW^{\top}X$ is unbiased and sensitivity-efficient. 
    \end{example}

In the next proposition we show that a sufficiently regular subset of a transport family is also a transport family. This result can be used to obtain new transport families from old ones, or to show that certain families are not transport families; see Remark \ref{Ex:GaussianExample} below.

\begin{proposition}
\label{prop:Subfamily}
Let $\Pcal = \{P_{\theta}: \theta\in\Theta\}\subseteq\Pcal_{2,\mathrm{ac}}(\Rbb^d)$ be a transport family with potential $\phi: \Rbb^d \rightarrow \Rbb^k$ and parameterization $\chi: \Theta \subseteq \mathbb{R}^p \rightarrow \mathbb{R}^k$.
Let $\Xi$ be an open subset of $\mathbb{R}^l$ with $l \le p$, and suppose that $h: \Xi \rightarrow \Theta$ is differentiable. Then the family $\tilde{\mathcal{P}}:= \{  P_{h(\xi)} : \xi \in \Xi\}$ is a transport family with potential $\phi$ and parameterization $\chi\circ h$.
\end{proposition}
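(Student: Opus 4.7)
The plan is to verify the two requirements for $\tilde{\mathcal{P}}$ to be a transport family with potential $\phi$ and parameterization $\chi\circ h$: namely (i) that $\tilde{\mathcal{P}}$ is DWS at each $\xi \in \Xi$, and (ii) that the resulting transport linearization factorizes as in~\eqref{eqn:W-fam}. The substantive work is in step (i), which amounts to a chain rule at the level of transport linearizations; step (ii) will then follow from direct substitution.

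For step (i), I aim to show that $\tilde{\mathcal{P}}$ is DWS at each $\xi\in\Xi$ with transport linearization $\tilde{\Phi}_\xi(x) := \Phi_{h(\xi)}(x)(Dh(\xi))^\top$. Fix $\tilde{h}\in\Rbb^l$ and let $h' := (Dh(\xi))^\top \tilde{h}\in\Rbb^p$. By differentiability of $h$ at $\xi$, we may write $h(\xi + t\tilde{h}) = h(\xi) + th' + \rho(t)$ with $\|\rho(t)\| = o(t)$. The goal is then to prove
\[
\int_{\Rbb^d}\|\transport_{P_{h(\xi)}\to P_{h(\xi+t\tilde h)}}(x) - x - t\Phi_{h(\xi)}(x)h'\|^2\,\diff P_{h(\xi)}(x) = o(t^2).
\]
By the triangle inequality, this splits into the DWS expansion of $\mathcal{P}$ at $h(\xi)$ along the linearized direction $h'$ (which contributes $o(t)$ in $L^2_{P_{h(\xi)}}$ by the DWS assumption on $\mathcal{P}$) plus the $L^2_{P_{h(\xi)}}$-difference between the transport maps to $P_{h(\xi + t\tilde h)}$ and to $P_{h(\xi) + th'}$, whose parameter targets differ by $\rho(t) = o(t)$. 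The technical crux is showing this latter difference is $o(t)$; I would do so by exploiting the explicit $\theta$-dependence of $\Phi_\theta$ through $\Lambda(\theta)$ and $D\chi(\theta)$ in the transport family, combined with the DWS hypothesis, to transfer the $o(t)$ control in parameter space to $o(t)$ control at the level of transport maps in $L^2_{P_{h(\xi)}}$.

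Once the chain rule $\tilde{\Phi}_\xi(x) = \Phi_{h(\xi)}(x)(Dh(\xi))^\top$ is in hand, step (ii) is immediate. Substituting the transport family factorization of $\mathcal{P}$ gives
\[
\tilde{\Phi}_\xi(x) = D\phi(x)(\Lambda(h(\xi)))^{-1}(D\chi(h(\xi)))^\top(Dh(\xi))^\top = D\phi(x)(\tilde{\Lambda}(\xi))^{-1}(D(\chi\circ h)(\xi))^\top,
\]
where the second equality uses the classical chain rule $(D(\chi\circ h)(\xi))^\top = (D\chi(h(\xi)))^\top(Dh(\xi))^\top$ together with the identity $\tilde{\Lambda}(\xi) := \Ebb_{P_{h(\xi)}}[(D\phi(X))^\top D\phi(X)] = \Lambda(h(\xi))$. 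Invertibility of $\tilde{\Lambda}(\xi)$ is inherited from that of $\Lambda(h(\xi))$, so $\tilde{\mathcal{P}}$ satisfies the full transport family definition with potential $\phi$ and parameterization $\chi\circ h$, as required.
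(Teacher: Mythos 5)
Your proof follows the paper's route exactly: first the chain rule $\tilde\Phi_\xi(x)=\Phi_{h(\xi)}(x)(Dh(\xi))^{\top}$ for the transport linearization, then direct substitution of the factorization~\eqref{eqn:W-fam} together with the observation $\tilde\Lambda(\xi)=\Lambda(h(\xi))$. The paper simply calls the chain-rule step ``straightforward from the definitions,'' while you sketch a triangle-inequality decomposition whose remaining crux---showing that the $L^2_{P_{h(\xi)}}$ distance between the transport maps to $P_{h(\xi+t\tilde h)}$ and to $P_{h(\xi)+th'}$ is $o(t)$ when the target parameters differ by $o(t)$---is precisely the technicality the paper also leaves implicit (and which, strictly speaking, asks for more than the Gateaux-type DWS condition as stated).
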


\begin{proof}
From the definitions it is straightforward to see that if $\Phi_\theta$ is the transport linearization for the family $\mathcal{P}$ at $\theta$, then $\tilde{\Phi}_{\xi}(x) := \Phi_{h(\xi)}(x) (Dh(\xi))^{\top} $ is the transport linearization for the family $\tilde{\mathcal{P}}$ at $\xi$.
In turn, using \eqref{eqn:W-fam} we deduce that 
\[ \tilde{\Phi}_{\xi}(x) = D\phi(x)(\Lambda(h(\xi)))^{-1}(D\chi(h(\xi)))^{\top} (Dh(\xi))^\top  =  D\phi(x)(\Lambda(h(\xi)))^{-1}(D\chi\circ h(\xi))^{\top},\]
implying that $\tilde{\mathcal{P}}$ is also a transport family. 
\end{proof} 

    Lastly, let us give an example of a concrete statistical model which is not a transport family.

    \begin{example}[Gaussian correlation family]
        Suppose that we want to estimate the correlation $\theta$ between two coordinates, where we observe i.i.d. samples $X_1,\ldots, X_n\in\Rbb^2$ which are drawn from the Gaussian distribution $\Ncal(0,\Sigma_{\theta})$; here, we have defined
        \begin{equation*}
            \Sigma_{\theta}:=\begin{pmatrix}
                1 & \theta \\
                \theta & 1
            \end{pmatrix}
        \end{equation*}
        for all $-1<\theta<1$.

        To cast this model in the form above, write $P_{\theta} := \Ncal(0,\Sigma_{\theta})$, so $p=k=1$ and $\Theta:=(-1,1)\subseteq\Rbb$, and $d=2$.
        By using the explicit formula for the optimal transport map between Gaussian distributions \cite{OlkinPukelsheim}, we can compute
        \begin{equation*}
            \Phi_{\theta}(x) = \frac{1}{2(1-\theta^2)}\begin{pmatrix}
                -\theta & 1 \\ 1 & -\theta
            \end{pmatrix}x,
        \end{equation*}
        and this shows that $\Pcal$ is not a transport family;
        if it were, then this would force $\nabla \phi(x) = x$ hence $\phi(x) =\frac{1}{2}\|x\|^2$, which cannot have expectation equal to $\theta$.
        In other words, there is no possibility to estimate $\theta$, or any sufficiently regular transformation thereof, in an unbiased and sensitivity-efficient way.
    \label{Example:Correlation}
    \end{example}
 
\begin{remark}[multivariate Gaussian model]
\label{Ex:GaussianExample}
Proposition~\ref{prop:Subfamily} and Example~\ref{Example:Correlation} together imply that $\{ \mathcal{N}(\mu, \Sigma): \mu\in\Rbb^d,\Sigma \succ 0 \} \subseteq \mathcal{P}_2(\mathbb{R}^d)$ is not a transport family if $d\ge 2$, while in contrast it is indeed an exponential family.
Yet, we clarify a possible confusion: It is indeed impossible to estimate the parameter $\theta=(\mu,\Sigma)$ in an unbiased sensitivity-efficient way, but it is possible to estimate some lower-dimensional transformations of $\theta$; for instance, Example~\ref{ex:loc-fam} shows that the sample mean is an unbiased and sensitivity-efficient estimator of $\mu$, even when $\Sigma$ is unknown.
\end{remark}

    Through these examples we have seen that many statistical models of interest are indeed transport families.
    Roughly speaking, this is because many families of probability distributions are in fact \textit{defined} by the form of the optimal transport map between elements; a more general description of models of this form is given below.
    
\begin{example}[construction of transport families for $p=1$]\label{eqn:W-fam-from-ODE}
Let $\phi: \Rbb^d \rightarrow \Rbb$ be a differentiable function with Lipschitz gradient.
For each $x \in \Rbb^d$, let $\{u_{x}(t)\}_{t\ge 0}$ denote the solution to the ordinary differential equation (ODE):
\[  \begin{cases}  \frac{d}{dt} u_{x}(t) = \nabla \phi(u_x(t)), \quad t>0, \\ u_x(0)= x .    \end{cases}   \]
By the classical Picard-Lindelöf theory for ODEs (e.g., see \cite[Theorem 2.2]{Teschl2012}), the fact that $\nabla \phi$ is Lipschitz guarantees that this ODE has a unique solution for all time.
Now, for a given $\theta >0$, we define the \textit{flow map} $\Psi_\theta (x):= u_x(\theta)$, for $x \in \Rbb^d$, which can be verified to be bi-Lipschitz by, say, Gronwall's inequality and the observation that the inverse of $\Psi_\theta$ coincides with the flow map of $-\nabla \phi$ \cite[Box~4.1]{Santambrogio}.
 
Now let $P_0 \in \Pcal_{2,\mathrm{ac}}(\Rbb^d)$, and define $P_{\theta} := (\Psi_{\theta})_{\#} P_0$ for all $\theta>0$.
It is a standard fact in optimal transport theory (see e.g., \cite[Theorem~5.34]{VillaniOldAndNew}) that $\{ P_{\theta}\}_{\theta >0}$ solves the continuity equation $\partial_\theta P_\theta + \Div(P_\theta \nabla \phi) =0$.
Moreover, $\Psi_{\theta}$ being bi-Lipschitz implies that $P_{\theta}$ is absolutely continuous with respect to the Lebesgue measure for all $\theta>0$.
Finally, it follows from the proof of   \cite[Proposition~8.4.6]{AmbrosioGigliSavare} that the family $\{ P_{\theta}\}_{\theta>0}$ is DWS and that $\Phi_\theta = \nabla \phi$ for all $\theta>0$.
In particular, the family $\{ P_\theta \}_{\theta>0}$ is a transport family, with parameterization $\chi(\theta) = \theta$ and potential $\phi$; by Theorem~\ref{thm:efficiency-W-fam}, an unbiased sensitivity-efficient estimator of $\theta$ is $n^{-1}\sum_{i=1}^{n}\phi(X_i)$.
This family can be extended to $\theta<0$ by pushing forward $P_0$ by the flow map of $-\nabla \phi $.
\end{example} 

    \section{Asymptotic Sensitivity-Efficiency via Wasserstein Projection}\label{sec:asymp}
    
    Our final goal is to understand, in a general DWS model $\Pcal =\{P_{\theta}:\theta\in\Theta\}$, how to construct an estimator of $\chi(\theta)$ which asymptotically achieves the Wasserstein-Cram\'er-Rao bound (Theorem~\ref{thm:WCR}).
    This is analogous to the construction of maximum likelihood estimators in the classical case, which achieve asymptotic variance-efficiency under mild assumptions.
    Our results in this section, generally speaking, resolve the open question posed in the discussion of \cite[Section~5]{WIM}, which asks how to develop a general-purpose method for constructing estimators that asymptotically achieve the Wasserstein-Cram\'er-Rao bound.
    The proofs of all results in this section can be found in Appendix~\ref{app:proofs-asymp}. 

    We begin with a precise notion of asymptotic sensitivity-efficiency.

    \begin{definition}\label{def:asymp-sens-eff}
        Suppose that $\Pcal$ is DWS, and that $T_n:(\Rbb^d)^n\to\Rbb^k$ is an estimator of $\chi(\theta)$.
        We say that $T_n$ is \textit{(strongly) asymptotically sensitivity-efficient} if we have
        \begin{equation}\label{eqn:sen-eff}
            n\sum_{i=1}^{n}(D_{x_i} T_n(X_1,\ldots, X_n))^{\top}D_{x_i} T_n(X_1,\ldots, X_n)\to (D\chi(\theta))^{\top}(J(\theta))^{-1}D\chi(\theta)
        \end{equation}
        in probability, when $X_1,X_2,\ldots$ are i.i.d. from $P_{\theta}$, for each $\theta\in\Theta$.
    \end{definition}

    The expectation of the left-hand side in the above display is exactly the cosensitivity matrix of $T_n$ times $n$ from Definition~\ref{def:cosens}; so the above definition can be seen as an asymptotic counterpart of exact sensitivity-efficiency from Definition~\ref{def:sens-eff}. 
    Note, however, that the definition of asymptotic sensitivity requires convergence in probability rather than in expectation.
    
    \begin{remark}[asymptotic sensitivity-efficiency vs. asymptotic variance-efficiency]
        There are some important differences between asymptotic sensitivity-efficiency and asymptotic variance-efficiency; recall that an estimator $T_n$ is called asymptotically variance-efficient if we have
        \begin{equation}\label{eqn:var-eff}
            \sqrt{n}(T_n(X_1,\ldots, X_n)-\chi(\theta)) \to \Ncal\Big(0,\,(D\chi(\theta))^{\top}(I(\theta))^{-1}D\chi(\theta)\Big)
        \end{equation}
        in distribution when $X_1,X_2,\ldots$ are i.i.d. from $P_{\theta}$, for each $\theta\in\Theta$.
        First, note that \eqref{eqn:sen-eff} requires a scaling of $n$ while \eqref{eqn:var-eff} requires a scaling of $\sqrt{n}$.
        Second, note that, as $n\to\infty$, \eqref{eqn:sen-eff} requires convergence to a non-random constant while \eqref{eqn:var-eff} requires convergence to non-degenerate (in fact, Gaussian) distribution.
        For these reasons, we have added the word ``strongly'' to the definition above.
    \end{remark}

    In this section we will mostly assume our estimators are consistent (that is, converging in probability to the estimand) rather than unbiased.
    This is more natural from the asymptotic point of view, because of the following.
    
    \begin{remark}[change of estimand]\label{rem:param-equivar}
        Suppose that $T_n$ is a consistent and asymptotically sensitivity-efficient estimator of $\theta$, and suppose that $\chi:\Theta\to\Rbb^k$ is continuously differentiable.
        Then set $\tilde{T}_n:=\chi(T_n)$ and use the chain rule to compute
        \begin{equation*}
            \sum_{i=1}^{n}(D_{x_i}\tilde{T}_n)^{\top}D_{x_i}\tilde{T}_n = (D\chi(T_n))^{\top}\left(\sum_{i=1}^{n}(D_{x_i}T_n)^{\top}D_{x_i}T_n\right)D\chi(T_n).
        \end{equation*}
        Thus, $\tilde{T}_n$ is a consistent and asymptotically sensitivity-efficient estimator of $\chi(\theta)$.
        In the classical setting of variance, an analogous result is true because of the delta method.
    \end{remark}

    This simple observation allows us to handle some new examples.

    \begin{example}[scale family, continued]
        In the setting of Example~\ref{ex:var-fam}, we have already seen that $n^{-1}\sum_{i=1}^{n}\|X_i\|^2$ is an exactly sensitivity-efficient estimator of $\theta^2$, and it is also consistent.
        Thus, Remark~\ref{rem:param-equivar} implies that $T_n = (n^{-1}\sum_{i=1}^{n}\|X_i\|^2)^{1/2}$ is a consistent and asymptotically sensitivity-efficient estimator of $\theta$.
    \end{example}

    \begin{example}[Gaussian variance, unknown mean]\label{ex:var-unknown-mean}
    Suppose that $X_1,\ldots, X_n$ are i.i.d. samples from the distribution $\Ncal(\mu,\sigma^2)$, where both $\mu\in\Rbb$ and $\sigma^2$ are unknown, and the goal is to estimate $\sigma^2$.
    In Example~\ref{ex:Gaussian} we found an (exactly) sensitivity-efficient estimator $T_n$ of $\theta = (\mu,\sigma^2+\mu^2)^{\top}$, which is also consistent.
    Thus by taking $\chi(\theta_1, \theta_2) = \theta_2-\theta_1^2$, it follows from Remark~\ref{rem:param-equivar} that the sample variance
    \begin{equation*}
        T_n = \frac{1}{n}\sum_{i=1}^{n}X_i^2 - \left(\frac{1}{n}\sum_{i=1}^{n}X_i\right)^2 = \frac{1}{n}\sum_{i=1}^{n}(X_i-\overline{X}_n)^2
    \end{equation*}
    is a consistent and asymptotically sensitivity-efficient estimator of $\sigma^2$.
    One can also obtain an unbiased, consistent, asymptotically sensitivity-efficient estimator $\tilde{T}_n$ by the usual correction, replacing the factor $n^{-1}$ by $(n-1)^{-1}$.
    \end{example}

    The main results of this section will show that there exists a general-purpose procedure to achieve asymptotic sensitivity-efficiency, under suitable assumptions.
    We specialize to the case of $\chi(\theta) = \theta$, and we note by Remark~\ref{rem:param-equivar} that this is no loss of generality.

    \begin{definition}\label{def:WPE}
        Suppose $\Pcal=\{P_{\theta}:\theta\in\Theta\}$ is DWS, and that $X_1,\ldots,X_n$ are i.i.d. samples from $P_{\theta}$ for some unknown $\theta\in\Theta$.
        The \textit{Wasserstein projection estimator (WPE)} is defined as
    \begin{equation*}
        T_n^{\WPE} := \underset{\theta\in\Theta}{\arg\min}\,W_2^2(P_{\theta},\bar P_n)
    \end{equation*}
    where $\bar P_n := \frac{1}{n}\sum_{i=1}^{n}\delta_{X_i}$ is the empirical measure of the data.
    \end{definition}

    It is known (see \cite[Theorem~2.2]{Bernton}) that, under mild assumptions, the minimizing set in the above definition is always non-empty and that $T_n^{\WPE}:(\Rbb^d)^n\to\Rbb^p$ can be taken to be a measurable function.
    Moreover, if $\Pcal$ is identifiable (i.e., $\theta_0\neq\theta_1$ implies $P_{\theta_0} \neq P_{\theta_1}$ for all $\theta_0,\theta_1\in \Theta$), and some technical assumptions hold, then $T_n^{\WPE}\to \theta^{\ast}$ almost surely, as $n \to \infty$, when $X_1,X_2,\ldots$ are i.i.d. samples from $P_{\theta^{\ast}}$ \cite[Theorem~2.1]{Bernton}.
    (Note that we did not require identifiability at any previous point in the paper.)
    
    
    The following simple but illustrative example gives a setting where we can directly show that the WPE is indeed sensitivity-efficient.
 
\begin{example}[location family, continued]\label{ex:Locationfamily2}
    We revisit Example \ref{ex:loc-fam}, and we note (say, by Brenier's theorem) that for any $\theta\in\Theta$ we have $\transport_{P_\theta \to \bar{P}_n} (x) =  \transport_{P_0 \to \bar{P}_n} (x - \theta)$.
    Consequently, we may compute $W_2^2(P_\theta, \bar{P}_n) =  \int_{\Rbb^d} \| \transport_{P_0 \to \bar{P}_n} (y) -y\|^2 \diff P_0(y)  - 2  \theta^{\top} \overline{X}_n + \|\theta\|^2$.
        It follows that $W_2^2(P_\theta, \bar{P}_n)$ is minimized when $\theta= \overline{X}_n$, hence the WPE for a location family is precisely the sample mean of $X_1,\ldots, X_n$.
        Since we showed in Example \ref{ex:loc-fam} that the sample mean is sensitivity-efficient, we conclude that the WPE is sensitivity-efficient in this case.
    \end{example}

    While our main results will show that the WPE is asymptotically sensitivity-efficient, let us first give the basic calculation used in our proofs, which parallels the proof of the fact that the MLE is asymptotically variance-efficient.
    Our notation departs from previous sections slightly, as we now use $\theta^{\ast}$ for the parameter of the population distribution, and $\theta$ for a generic parameter (i.e., dummy variable).
    Assuming for the sake of simplicity that $\Theta\subseteq\Rbb$, we write $\mathcal{L}:(\Rbb^d)^n\times\Theta\to\Rbb$ for the function $\mathcal{L}(x_1,\ldots, x_n,\theta):=W_2^2(P_{\theta},n^{-1}\sum_{i=1}^{n}\delta_{x_i})$, and we note that the first-order optimality conditions for the WPE state $(\partial/\partial \theta)\mathcal{L}(X_1,\ldots, X_n,\hat\theta_n) = 0$, where $\hat \theta_n:= T_n^{\WPE}(X_1,\ldots, X_n)$.
    Differentiating these first-order conditions with respect to $x_i$, via the chain rule, yields
    \begin{equation}
       0  = \nabla_{x_i}\frac{\partial}{\partial \theta}\mathcal{L}(X_1,\ldots, X_n,\hat{\theta}_n) + \frac{\partial^2 }{\partial \theta^2} \mathcal{L}(X_1,\ldots,X_n,\hat{\theta}_n)\nabla_{x_i} \hat{\theta}_n.
       \label{eqn:WPE_ImplicitFunctionThm}
    \end{equation}
    So rearranging \eqref{eqn:WPE_ImplicitFunctionThm}, taking the squared norms of each term, and summing over $1\le i\le n$ shows
       \begin{equation}
            n\sum_{i=1}^n \left\| \nabla_{x_i} \hat{\theta}_n(X_1, \dots, X_n) \right\|^2 = \left( \frac{\partial^2 }{\partial \theta^2} \mathcal{L}(X_1,\ldots,X_n,\hat{\theta}_n)  \right)^{-2} \frac{1}{n}\sum_{i=1}^n \left\|  n\nabla_{x_i} \frac{\partial }{\partial \theta} \mathcal{L}(X_1,\ldots,X_n,\hat{\theta}_n)\right\|^2.
        \label{eqn:WPE_Sensitivity}
        \end{equation}
    The last step is to show that these factors converge to $(J(\theta^{\ast}))^{-2}$ and $J(\theta^{\ast})$, respectively; roughly speaking, this comes from the fact that $\Phi_{\theta^{\ast}}$ and $J(\theta^{\ast})$ are closely related to the higher-order derivatives of $W_2^2(P_{\theta},P_{\theta^{\ast}})$ at $\theta=\theta^{\ast}$, in a similar way as the Fisher information is related to higher-order derivatives of the KL divergence.

    An important observation about this calculation is that the first-order optimality conditions for the WPE involve a derivative with respect to $\theta$, which we then differentiated with respect to $X_1,\ldots, X_n$.
    This contrasts the calculation of the asymptotic distribution of the MLE which involves differentiating twice with respect to $\theta$.
    This is the reason that we will encounter higher-order and mixed partial derivatives in $X_1,\ldots, X_n,\theta$ in our analysis, while the classical Cram\'er-Rao theory involves only higher-order derivatives in $\theta$.
    (Broadly, this is because the reaction equation~\eqref{eqn:ReactEq} is an ODE while the continuity equation~\eqref{eqn:ContEq} is a PDE.)

    Another observation, which plays a crucial role in the proofs to come, is that the first-order optimality conditions for the WPE can be interpreted as the orthogonality of the ``residual vector'' $\transport_{\hat \theta_n\to \bar P_n}(\cdot)-\id\in L^2_{P_{\hat\theta_n}}(\Rbb^d;\Rbb^d)$ to the tangent plane of $\Pcal$ at $hat \theta_n$.
    See Figure~\ref{fig:WassersteinProjection} for an illustration.

    Turning the formal calculations above into a rigorous argument takes significant work and requires further technical assumptions in addition to the assumptions of the previous sections.
    We will introduce these assumptions as they are needed, and towards this end it is useful to separate the univariate case ($d=1$) from the multivariate case ($d\ge 1$) in the following two subsections.

    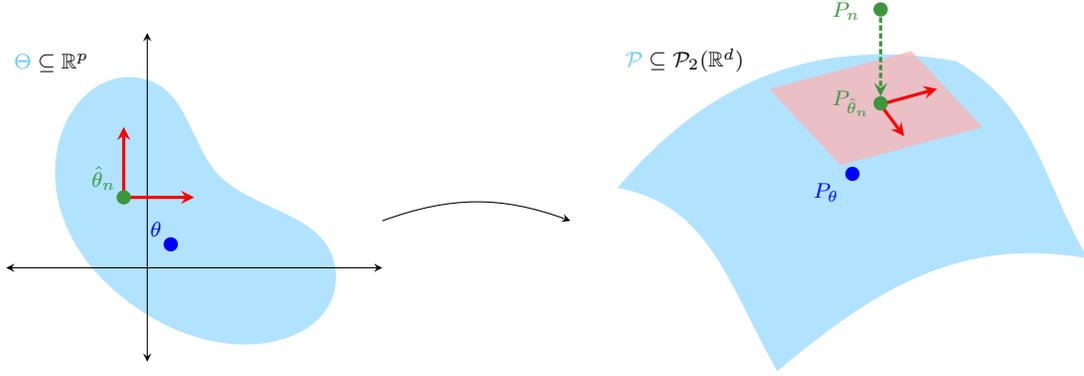
\begin{figure}
        \centering
        \begin{tikzpicture}[scale=1.25]		
			
			
			\fill[use Hobby shortcut, closed=true, color=myblue, opacity=0.5, cm={-1, 0, 0, 1, (-7.5,0)}]
			(-5.5,0) .. (-3.5,-0.5) .. (-3.5,2) .. (-4.25,1) .. (-5.5,0);
			\node[black, above right] at (-5.5, 2) {$\textcolor{myblue}{\Theta}\subseteq\mathbb{R}^p$};
			
			\draw[thin,stealth-stealth] (-4,-1) -- ++(0, 3.5);
			\draw[thin,stealth-stealth] (-5.5,0) -- ++(4,0);
			
			\filldraw[color=blue] (-3.75, 0.25) circle (2pt);
			\node[blue, above right] at (-4.05, 0.25) {$\theta$};
			
			\draw[very thick, color=red,-stealth] (-4.25, 0.75) -- ++(0.75,0);
			\draw[very thick, color=red,-stealth] (-4.25, 0.75) -- ++(0, 0.75);
			
			\filldraw[color=mygreen] (-4.25, 0.75) circle (2pt);
			\node[mygreen, above left] at (-4.25, 0.75) {$\hat\theta_n$};

			\draw [thin, -stealth] (-1.5,0.5) to [out=20,in=160] (0.5,0.5);

			
			\coordinate (A) at (6, 0.1);
			\coordinate (B) at (2.7, -1.1);
			\coordinate (C) at (1, 0.85);
			\coordinate (D) at (4.6, 2.2);
			\fill[thin, opacity=0.5, color=myblue] (A) to[out=170,in=40] (B) to[out=120,in=350] (C) to[out=50,in=170] (D) to[out=330,in=120] (A);
			\node[black, above right] at (1, 2) {$\textcolor{myblue}{\Pcal}\subseteq\mathcal{P}_2(\mathbb{R}^d)$};

			\filldraw[thin, opacity=0.75, color=red!30, xscale=-0.75, yscale=0.4,shift={(-6.5,3.75)}] (0, 0) to (2, -1) to (3,1) to (1, 2) to cycle;

			\filldraw[color=blue] (3.5, 1) circle (2pt);
			\node[blue, right] at (3, 0.8) {$P_{\theta}$};

			\draw[very thick, color=red,-stealth] (3.75, 1.8) -- ++(0.3, -0.4);
			\draw[very thick, color=red,-stealth] (3.75, 1.725) -- ++(0.65,0.18);
			
			\filldraw[color=mygreen] (3.8, 1.75) circle (2pt);
			\node[mygreen, right] at (3.2, 1.75) {$P_{\hat\theta_n}$};
			
			\filldraw[color=mygreen] (3.8, 2.75) circle (2pt);
			\node[mygreen, right] at (3.2, 2.75) {$\bar P_{n}$};
			\draw[very thick, color=mygreen, dash pattern=on 2pt off 1pt,-stealth] (3.8,2.75) -- (3.8, 1.825);
			
		\end{tikzpicture}
        \caption{The Wasserstein projection estimator (WPE) in a statistical model.
        For the sake of simplicity, we write $\hat \theta_n$ in place of $T_n^{\WPE}$.
        The first-order optimality conditions for the WPE state that the ``residual'' from $P_{\hat\theta_n}$ to $\bar P_n$ is orthogonal to $\Phi_{\hat\theta_n}h$ for all $h\in \Rbb^p$.}
        \label{fig:WassersteinProjection}
    \end{figure}

    \subsection{Univariate case ($d=1$)}
    \label{sec:Univariate}
    We begin with the simple case of $d=1$ (where $p$ and $k$ remain general) in which we can make a direct argument for asymptotic sensitivity-efficiency.
    The advantage of $d=1$ is that we have an explicit representation of the optimal transport maps in terms of quantile functions, and hence for other objects of interest (e.g., transport linearization, Wasserstein information matrix).
    Thus, let us write $F_{\theta}$ and $F_{\theta}^{-1}$ for the cumulative distribution function and the quantile function of $P_{\theta}$, respectively, for each $\theta\in\Theta$.
    Then we have the following alternative characterization of the Wasserstein information matrix.

    \begin{lemma}\label{DWS-1s}
        A model $\Pcal=\{P_{\theta}:\theta\in\Theta\}\subseteq\Pcal_{2,\mathrm{ac}}(\Rbb)$ is DWS at $\theta\in\Theta$ if and only if $\nabla_{\theta}F^{-1}_{\theta}(\,\cdot\,)$ exists and lies in $L^2([0,1];\Rbb^p)$, in which case the Wasserstein information matrix is given by $J(\theta) = \int_{0}^{1}\left(\nabla_{\theta}F_{\theta}^{-1}(u)\right)\left(\nabla_{\theta}F_{\theta}^{-1}(u)\right)^{\top}\diff u$.
    \end{lemma}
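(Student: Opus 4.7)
The plan is to reduce both claims to analyses of the quantile function $F_\theta^{-1}$ by exploiting the explicit form of optimal transport in one dimension. The key input is the classical fact that for $\mu,\nu\in\Pcal_{2,\mathrm{ac}}(\Rbb)$ with distribution functions $F_\mu,F_\nu$, the optimal transport map is given by quantile matching: $\transport_{\mu\to\nu}(x)=F_\nu^{-1}(F_\mu(x))$. Equivalently, under the change of variables $u=F_\mu(x)$, $\mu$ becomes Lebesgue measure on $[0,1]$ and the transport map becomes $u\mapsto F_\nu^{-1}(u)$.

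First, I would apply this with $\mu=P_\theta$ and $\nu=P_{\theta+th}$ and change variables via $u=F_\theta(x)$ in the DWS condition~\eqref{eqn:DWS}, transforming it into
\[
\int_0^1 \bigl| F_{\theta+th}^{-1}(u)-F_\theta^{-1}(u)-t\,\Phi_\theta(F_\theta^{-1}(u))\,h \bigr|^2 \diff u = o(t^2).
\]
Setting $g_\theta(u):=(\Phi_\theta(F_\theta^{-1}(u)))^\top\in\Rbb^p$, this is precisely the statement that $\theta\mapsto F_\theta^{-1}(\cdot)$, viewed as a map into $L^2([0,1];\Rbb)$, has $L^2$-Gateaux derivative $u\mapsto g_\theta(u)^\top h$ in direction $h$, with $g_\theta\in L^2([0,1];\Rbb^p)$; square-integrability of $g_\theta$ follows from
\[
\int_0^1 \|g_\theta(u)\|^2\diff u = \int_\Rbb \|\Phi_\theta(x)\|^2\diff P_\theta(x),
\]
which is finite since DWS applied in each coordinate direction $h=e_i$ gives $\Phi_\theta(\cdot)e_i \in L^2_{P_\theta}(\Rbb;\Rbb)$. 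Identifying $\nabla_\theta F_\theta^{-1}(u):=g_\theta(u)$ yields the forward direction. For the converse, I would run the same change of variables in reverse: given $\nabla_\theta F_\theta^{-1}\in L^2([0,1];\Rbb^p)$, defining $\Phi_\theta(x):=(\nabla_\theta F_\theta^{-1}(F_\theta(x)))^\top$ recovers~\eqref{eqn:DWS} directly.

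The formula for $J(\theta)$ is then immediate from the same change of variables applied to Definition~\ref{def:WIM}:
\[
J(\theta)=\int_\Rbb \Phi_\theta(x)^\top \Phi_\theta(x)\diff P_\theta(x)=\int_0^1 \Phi_\theta(F_\theta^{-1}(u))^\top\Phi_\theta(F_\theta^{-1}(u))\diff u=\int_0^1 \nabla_\theta F_\theta^{-1}(u)\bigl(\nabla_\theta F_\theta^{-1}(u)\bigr)^\top\diff u.
\]

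Overall the proof is essentially a bookkeeping exercise once the quantile representation of one-dimensional optimal transport is in hand. The only subtlety I anticipate is being careful about the sense in which ``$\nabla_\theta F_\theta^{-1}(\cdot)$ exists'' — namely, as an $L^2$-Gateaux derivative that is linear in the direction $h$, rather than as a pointwise gradient — but this is exactly the structure that DWS itself imposes on $\Phi_\theta$, so the two notions transfer cleanly through the change of variables without additional work.
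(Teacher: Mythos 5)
Your proof is correct and takes essentially the same route as the paper's: both apply the quantile-matching formula $\transport_{P_\theta\to P_{\theta+th}}(x)=F_{\theta+th}^{-1}(F_\theta(x))$, change variables $u=F_\theta(x)$ to rewrite the DWS condition as a first-order Taylor statement for $\theta\mapsto F_\theta^{-1}$ in $L^2([0,1])$, and read off $\Phi_\theta$ and $J(\theta)$. Your version spells out the equivalence and the square-integrability more explicitly than the paper, which is fine, but there is no difference in the underlying idea.
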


    This leads us to the following main result, in the special case $d=1$.
    Recall that, in a Hilbert space $\mathcal{H}$, we say that $f_n\to f$ \textit{weakly} if $\langle f_n,g\rangle\to \langle f,g\rangle$ for all $g\in\mathcal{H}$.
    
    \begin{theorem}\label{thm:WPE-efficient}
        Suppose $\Pcal=\{P_{\theta}:\theta\in\Theta\}\subseteq \Pcal_{2,\textnormal{ac}}(\Rbb)$ is DWS.
        Also suppose:
        \begin{itemize}
            \item[(i)] the map $\theta \mapsto \nabla_{\theta}F^{-1}_{\theta}(\,\cdot\,)$ from $\Theta$ to $L^2([0,1];\Rbb^p)$ is continuous, and
            \item[(ii)] the map $\theta \mapsto \nabla_{\theta}^2F^{-1}_{\theta}(\,\cdot\,)$ from $\Theta$ to $L^2([0,1];\Rbb^{p\times p})$ is weakly continuous.
        \end{itemize}
        If $X_1,X_2,\ldots$ are i.i.d. samples from $P_{\theta^{\ast}}$ and $T_n^{\WPE}$ is a consistent estimator of $\theta^{\ast}\in\Theta$, then we have
        \begin{equation*}
            n\sum_{i=1}^{n}\left(\frac{\partial T_n^{\WPE}}{\partial x_i}(X_1,\ldots, X_n)\right)\left(\frac{\partial T_n^{\WPE}}{\partial x_i}(X_1,\ldots, X_n)\right)^{\top} \to (J(\theta^{\ast}))^{-1}
        \end{equation*}
        in probability as $n\to\infty$, for each $\theta^{\ast}\in\Theta$.  
    \end{theorem}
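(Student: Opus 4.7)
The plan is to exploit the explicit representation available in $d=1$, namely $W_2^2(P_\theta,\bar P_n) = \int_0^1 (F_\theta^{-1}(u)-\bar F_n^{-1}(u))^2\,\diff u$, and to carry out the implicit-function-theorem strategy sketched in~\eqref{eqn:WPE_ImplicitFunctionThm}--\eqref{eqn:WPE_Sensitivity} with the continuity equation reasoning replaced by direct $L^2$ calculus on $[0,1]$. Writing $\hat\theta_n := T_n^{\WPE}$, the first-order optimality condition reads
\begin{equation*}
    \int_0^1 \bigl(F^{-1}_{\hat\theta_n}(u)-\bar F_n^{-1}(u)\bigr)\,\nabla_\theta F^{-1}_{\hat\theta_n}(u)\,\diff u = 0.
\end{equation*}
Since with probability one the order statistics are distinct, the sorting permutation is locally constant and $\sum_{i=1}^{n}(\partial_{x_i}\hat\theta_n)(\partial_{x_i}\hat\theta_n)^\top = \sum_{j=1}^{n}(\partial_{X_{(j)}}\hat\theta_n)(\partial_{X_{(j)}}\hat\theta_n)^\top$, so I would work with the order-statistic derivatives, whose form is more tractable since $\bar F_n^{-1} = X_{(j)}$ on $((j-1)/n, j/n]$.

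Differentiating the first-order condition with respect to $X_{(j)}$ and applying the implicit function theorem yields
\begin{equation*}
    \frac{\partial\hat\theta_n}{\partial X_{(j)}} = (H_n(\hat\theta_n))^{-1}\int_{(j-1)/n}^{j/n}\nabla_\theta F^{-1}_{\hat\theta_n}(u)\,\diff u,
\end{equation*}
where $H_n(\theta):=\int_0^1\nabla_\theta F^{-1}_\theta(\nabla_\theta F^{-1}_\theta)^\top\,\diff u + \int_0^1(F^{-1}_\theta-\bar F_n^{-1})\,\nabla^2_\theta F^{-1}_\theta\,\diff u$. Setting $a_j:=\int_{(j-1)/n}^{j/n}\nabla_\theta F^{-1}_{\hat\theta_n}(u)\,\diff u$, this gives the sandwich
\begin{equation*}
    n\sum_{j=1}^{n}\frac{\partial\hat\theta_n}{\partial X_{(j)}}\Bigl(\frac{\partial\hat\theta_n}{\partial X_{(j)}}\Bigr)^{\!\top} = (H_n(\hat\theta_n))^{-1}\Bigl(n\sum_{j=1}^{n}a_j a_j^\top\Bigr)(H_n(\hat\theta_n))^{-1},
\end{equation*}
so by continuous mapping it suffices to show $H_n(\hat\theta_n)\to J(\theta^\ast)$ and $n\sum_j a_j a_j^\top\to J(\theta^\ast)$ in probability.

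For $H_n$, the leading term equals $J(\hat\theta_n)$ by Lemma~\ref{DWS-1s} and converges to $J(\theta^\ast)$ by hypothesis~(i) and consistency. For the residual term, hypothesis~(i) promotes to local Lipschitz continuity of $\theta\mapsto F^{-1}_\theta$ in $L^2([0,1];\Rbb)$, and the $L^2$ Glivenko--Cantelli theorem for quantile functions (valid since $P_{\theta^\ast}$ has finite second moment) gives $\|\bar F_n^{-1}-F^{-1}_{\theta^\ast}\|_{L^2}\to 0$ almost surely; combined with consistency of $\hat\theta_n$ this yields $\|F^{-1}_{\hat\theta_n}-\bar F_n^{-1}\|_{L^2}\to 0$ in probability. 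Meanwhile hypothesis~(ii), via Banach--Steinhaus, gives that $\|\nabla^2_\theta F^{-1}_{\hat\theta_n}\|_{L^2}$ is bounded in probability, so Cauchy--Schwarz forces the residual term to vanish. For the numerator, I would recognize $n\sum_j a_j a_j^\top = \int_0^1 g_n(u)g_n(u)^\top\,\diff u$, where $g_n$ is the conditional expectation of $\nabla_\theta F^{-1}_{\hat\theta_n}$ on the $\sigma$-algebra generated by $\{((j-1)/n,j/n]\}_{j=1}^n$; splitting $g_n$ as the conditional expectation of $\nabla_\theta F^{-1}_{\theta^\ast}$ plus an error controlled by $\|\nabla_\theta F^{-1}_{\hat\theta_n}-\nabla_\theta F^{-1}_{\theta^\ast}\|_{L^2}$ (which vanishes in probability by hypothesis~(i) and consistency), and invoking the $L^2$ approximation of an $L^2$ function by its conditional expectation on partitions of vanishing mesh, yields $g_n\to\nabla_\theta F^{-1}_{\theta^\ast}$ in $L^2$ in probability, hence $n\sum_j a_j a_j^\top\to J(\theta^\ast)$.

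The main obstacle will be the rigorous control of the residual piece of $H_n(\hat\theta_n)$, which requires simultaneously handling two different modes of convergence: strong $L^2$-vanishing of $F^{-1}_{\hat\theta_n}-\bar F_n^{-1}$ and only weak control on $\nabla^2_\theta F^{-1}_{\hat\theta_n}$. The weak continuity in hypothesis~(ii) is precisely what is needed to invoke Banach--Steinhaus to get norm-boundedness, after which Cauchy--Schwarz closes the estimate; this explains why weak rather than strong continuity is the natural assumption. A secondary subtlety is the double limit in the Riemann-sum step, where the joint dependence of $g_n$ on $n$ (through the partition) and on $\hat\theta_n$ must be disentangled via a triangle-inequality argument exploiting consistency together with hypothesis~(i).
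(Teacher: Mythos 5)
Your proof is correct and follows the paper's strategy: the same first-order optimality condition, implicit differentiation yielding the sandwich identity
\[
n\sum_{i=1}^{n}\frac{\partial\hat\theta_n}{\partial x_i}\Bigl(\frac{\partial\hat\theta_n}{\partial x_i}\Bigr)^{\!\top}
= (H_n(\hat\theta_n))^{-1}\Bigl(\int_0^1 g_n g_n^\top\,\diff u\Bigr)(H_n(\hat\theta_n))^{-1}
\]
with $g_n$ the conditional expectation of $\nabla_\theta F^{-1}_{\hat\theta_n}$ onto the uniform mesh-$1/n$ partition, and the same two limits $H_n(\hat\theta_n)\to J(\theta^*)$ and $\int_0^1 g_n g_n^\top\,\diff u\to J(\theta^*)$ in probability. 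Two steps in your write-up are more explicit than the paper's, and one of them is actually an improvement. For the residual term in $H_n$ you spell out the Banach--Steinhaus reduction (weak continuity of $\theta\mapsto\nabla^2_\theta F^{-1}_\theta$ forces $\|\nabla^2_\theta F^{-1}_{\hat\theta_n}\|_{L^2}$ to be bounded in probability) followed by Cauchy--Schwarz against the strongly vanishing $F^{-1}_{\hat\theta_n}-\bar F_n^{-1}$; this is the content compressed into the paper's single line, and it correctly explains why hypothesis (ii) is phrased as weak rather than strong continuity. For the numerator you invoke directly that the conditional-expectation operators $P_n$ on the uniform mesh-$1/n$ partitions converge strongly to the identity on $L^2([0,1])$. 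This is in fact true: each $P_n$ is a contraction and $P_nf\to f$ uniformly for continuous $f$, so density closes the argument, and nestedness of the underlying $\sigma$-algebras is irrelevant. The paper, claiming that non-nestedness blocks strong convergence, instead routes this step through a diagonal-subsequence argument; your direct approach is simpler and shows that caution to be unnecessary.
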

    
    The preceding result allows us to revisit some earlier examples. 

    \begin{example}[location family, continued]\label{ex:Locationfamily3}
    We have already seen in Example~\ref{ex:loc-fam} and Example~\ref{ex:Locationfamily2} that the sample mean is exactly (hence asymptotically) sensitivity-efficient in a location family.
    As a sanity check, it is useful to verify that the hypotheses of Theorem~\ref{thm:WPE-efficient} are indeed verified in this case.
    To see this, write $F_0^{-1}$ for the quantile function of $P_0$, and note that we have $F_{\theta}^{-1}(u) = F_0^{-1}(u) + \theta$.
    Thus, $\nabla_{\theta}F^{-1}_{\theta}(\cdot) = 1$ and $\nabla_{\theta}^2F^{-1}_{\theta}(\cdot) = 0$, which do not depend on $\theta$ at all.
    \end{example}
    
    \begin{example}[scale family, continued]\label{ex:scale-fam}
        We revisit Example~\ref{ex:var-fam}, where it was shown that $\theta^2$ is the only estimand (up to affine transformations) for which unbiased sensitivity-efficient estimation is possible.
        Presently, we use the WPE to find an asymptotically sensitivity-efficient estimator of $\theta$.
        Observe that the quantile function of $P_{\theta}$ is exactly $F_{\theta}^{-1}(u) = \theta F_{1}^{-1}(u)$ for all $\theta>0$ and $0<u<1$; hence we have $\nabla_{\theta}F_{\theta}^{-1}(\cdot) = F_1^{-1}(\cdot)$ and $\nabla_{\theta}^2F_{\theta}^{-1}(\cdot) = 0$, so the hypotheses of Theorem~\ref{thm:WPE-efficient} are verified.
        Moreover, we compute
        \[ W_2^2(P_{\theta},\bar P_n) =  \int_{0}^{1}\left|\bar F_n^{-1}(u) - \theta F_1^{-1}(u)\right|^2\diff u,  \]
        where $\bar F_n^{-1}$ is the quantile function of the empirical measure $\bar P_n$, and we note that this is a quadratic function in $\theta$, so the WPE is exactly
        \begin{equation*}
            T_n^{\WPE}(X_1,\ldots, X_n) = \frac{\int_{0}^{1}\bar F_n^{-1}(u)F_1^{-1}(u)\diff u}{\int_{0}^{1}\left| F_{1}^{-1}(u)\right|^2\diff u}.
        \end{equation*}
        Note that this quantity has a concrete interpretation as the coefficient of the regression of $\tilde X_n$ on $X$, where the joint distribution of $(\tilde{X}_n,X)$ is the optimal coupling of $\bar P_n$ and $P_1$.
        In summary, the estimator $T_n^{\WPE}$ defined above is asymptotically sensitivity-efficient in any scale family.
    \end{example}

    Our last result in the special case of $d=1$ is the characterization of the asymptotic distribution of the WPE under some additional assumptions; these assumptions can be relaxed by considering the more general conditions in \cite[Theorem 4.6]{DelBarrio2005}, but we do not pursue this here.
    We also note that similar results can be deduced from existing central limit theorems for $L$-statistics (e.g., \cite[Theorem~22.3]{vanDerVaart}).
    
\begin{theorem}
\label{thm:WPEVariance}
Suppose $\Pcal=\{P_{\theta}:\theta\in\Theta\}\subseteq \Pcal_{2,\textnormal{ac}}(\Rbb)$ is DWS.
Also suppose that $P_\theta$ has bounded support $[a_\theta,b_{\theta}]$ and that its density $p_\theta$ is differentiable and uniformly bounded below on $(a_{\theta}, b_{\theta})$, for all $\theta\in\Theta$.
If $X_1,X_2,\ldots$ are i.i.d. samples from $P_{\theta^{\ast}}$ and $T_n^{\WPE}$ is a consistent estimator of $\theta^{\ast}\in\Theta$, then we have
\[  \sqrt{n}(T_n^{\mathrm{WPE}}  - \theta^*) \rightarrow  \Ncal(0, \Sigma(\theta^{\ast})),\]
in distribution, where
\[  \Sigma(\theta^{\ast}) :=J(\theta^*)^{-1}\left(\int_0^1 \int_0^1 ( \min\{u, v \} - u v ) \frac{\nabla_\theta F_{\theta^{\ast}}^{-1}(u)}{p_{\theta^*}(F_{\theta^*}^{-1}(u))} \left(\frac{\nabla_\theta F_{\theta^{\ast}}^{-1}(v)}{p_{\theta^*}(F_{\theta^*}^{-1}(v))}\right)^{\top} \diff u \diff v\right) J(\theta^*)^{-1}. \]
for all $\theta^{\ast}\in\Theta$.
\end{theorem}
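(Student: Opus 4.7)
The plan is to exploit the explicit representation $W_2^2(P_{\theta},\bar P_n)=\int_0^1(F_{\theta}^{-1}(u)-\bar F_n^{-1}(u))^2\,\diff u$ available in dimension $d=1$ and to linearize the first-order optimality conditions for the WPE about $\theta^{\ast}$. Writing $\hat\theta_n := T_n^{\WPE}$ and differentiating in $\theta$ under the integral, the minimizing property of $\hat\theta_n$ gives
\begin{equation*}
\int_0^1 \nabla_{\theta}F_{\hat\theta_n}^{-1}(u)\left(F_{\hat\theta_n}^{-1}(u)-\bar F_n^{-1}(u)\right)\diff u \;=\; 0.
\end{equation*}
A first-order Taylor expansion of $F_{\theta}^{-1}(u)$ in $\theta$ about $\theta^{\ast}$, combined with the identity $J(\theta^{\ast}) = \int_0^1 \nabla_\theta F_{\theta^{\ast}}^{-1}(u)\bigl(\nabla_\theta F_{\theta^{\ast}}^{-1}(u)\bigr)^{\top}\,\diff u$ from Lemma~\ref{DWS-1s}, then yields the linearization
\begin{equation*}
J(\theta^{\ast})(\hat\theta_n-\theta^{\ast}) \;=\; \int_0^1 \nabla_\theta F_{\theta^{\ast}}^{-1}(u)\left(\bar F_n^{-1}(u) - F_{\theta^{\ast}}^{-1}(u)\right)\diff u \;+\; R_n,
\end{equation*}
where $R_n$ collects the quadratic and higher-order remainders in $\hat\theta_n-\theta^{\ast}$ as well as the cross-terms with $\bar F_n^{-1} - F_{\theta^{\ast}}^{-1}$.

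Next, I will invoke the standard invariance principle for the empirical quantile process: the assumption that $P_{\theta^{\ast}}$ has bounded support $[a_{\theta^{\ast}},b_{\theta^{\ast}}]$ with density $p_{\theta^{\ast}}$ differentiable and uniformly bounded below guarantees that $\sqrt{n}(\bar F_n^{-1}(\,\cdot\,) - F_{\theta^{\ast}}^{-1}(\,\cdot\,))$ converges weakly in $L^2([0,1])$ to $-\mathbb{B}(\,\cdot\,)/p_{\theta^{\ast}}(F_{\theta^{\ast}}^{-1}(\,\cdot\,))$, where $\mathbb{B}$ is a standard Brownian bridge on $[0,1]$ (see \cite[Theorem~4.6]{DelBarrio2005}). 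Applying the bounded linear functional $g\mapsto J(\theta^{\ast})^{-1}\int_0^1 \nabla_\theta F_{\theta^{\ast}}^{-1}(u)\,g(u)\,\diff u$ via the continuous mapping theorem, together with Slutsky's lemma to absorb $R_n$, produces
\begin{equation*}
\sqrt{n}(\hat\theta_n-\theta^{\ast}) \;\longrightarrow\; -J(\theta^{\ast})^{-1}\int_0^1 \frac{\nabla_\theta F_{\theta^{\ast}}^{-1}(u)}{p_{\theta^{\ast}}(F_{\theta^{\ast}}^{-1}(u))}\,\mathbb{B}(u)\,\diff u
\end{equation*}
in distribution. The right-hand side is a centered Gaussian vector whose covariance matrix, computed using $\Ebb[\mathbb{B}(u)\mathbb{B}(v)] = \min\{u,v\}-uv$ and the symmetry of $J(\theta^{\ast})^{-1}$, is precisely $\Sigma(\theta^{\ast})$.

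The main obstacle is rigorously showing $R_n=o_{\Pbb}(n^{-1/2})$. My strategy is a bootstrapping argument: the assumed consistency of $\hat\theta_n$ together with a crude $L^2$-bound on the leading term gives a preliminary rate $\hat\theta_n-\theta^{\ast} = O_{\Pbb}(n^{-1/2})$; then $R_n$, whose dominant terms are of order $|\hat\theta_n-\theta^{\ast}|^2$ and $|\hat\theta_n-\theta^{\ast}|\cdot\|\bar F_n^{-1}-F_{\theta^{\ast}}^{-1}\|_{L^2([0,1])}$, is $O_{\Pbb}(n^{-1})$ once one uses the $n^{-1/2}$ rate on $\|\bar F_n^{-1}-F_{\theta^{\ast}}^{-1}\|_{L^2([0,1])}$ coming from the same bounded-density-below hypothesis. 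This step also requires uniform control of $\nabla_\theta^2 F_{\theta}^{-1}$ in a neighborhood of $\theta^{\ast}$, which follows from implicitly differentiating $F_\theta(F_\theta^{-1}(u))=u$ twice in $\theta$ and using that $p_\theta$ is differentiable and uniformly bounded below, so that the resulting expressions are integrable on $[0,1]$ uniformly over a neighborhood of $\theta^{\ast}$.
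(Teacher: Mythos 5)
Your proof is correct and follows essentially the same route as the paper's: both linearize the first-order optimality conditions for the WPE about $\theta^{\ast}$, invoke the functional CLT for the empirical quantile process from \cite[Theorem~4.6]{DelBarrio2005} to handle the leading term, and conclude via Slutsky's theorem. The only cosmetic difference is that the paper organizes the linearization as a Taylor expansion of the score $\nabla_{\theta}G_n(\theta)=\int_0^1(F_{\theta}^{-1}(u)-\bar F_n^{-1}(u))\nabla_{\theta}F_\theta^{-1}(u)\,\diff u$ with Hessian $\nabla_{\theta}^2G_n(\hat\theta_n)\to J(\theta^{\ast})$ a.s., whereas you expand $F_\theta^{-1}$ in $\theta$ directly, collect the cross-terms into an explicit remainder $R_n$, and dispose of it by bootstrapping the rate — a somewhat more careful treatment of a step the paper leaves implicit.
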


The results so far allow us to finally handle the following statistical application of the uniform scale family, which was previously studied in Example~\ref{ex:Unif-scale} in the Introduction.

\begin{example}[uniform scale family, revisited]\label{ex:unif-scale-fam}
Consider the special case of Example~\ref{ex:scale-fam} of the uniform scale family, in which case we have $F_1^{-1}(u) = u$ for $0\le u\le 1$.
In particular, the WPE can be written explicitly as
        \begin{equation*}
            T_n^{\WPE} = 3\sum_{i=1}^{n}\int_{\frac{i-1}{n}}^{\frac{i}{n}}X_{(i)}u\diff u = \frac{3}{2n^2}\sum_{i=1}^{n}(2i-1)X_{(i)},
        \end{equation*}
        where $X_{(1)},\ldots X_{(n)}$ denote the order statistics of $X_1,\ldots X_n$.
        By massaging $T_n$ in ways that do not alter its asymptotic sensitivity (e.g., multiplying by some deterministic factor $\alpha_n$ satisfying $\alpha_n\to 1$, or adding a scalar multiple of $n^{-1}\overline{X}_n$ which has sensitivity of order $O(n^{-2})$), one can derive some simplified estimators of interest; for example, $\tilde{T}_n := 3n^{-1}(n-1)^{-1}\sum_{i=1}^{n}iX_{(i)}$ is unbiased, consistent, and asymptotically sensitivity-efficient.
        
        By Example~\ref{ex:scale-fam} and Theorem~\ref{thm:WPEVariance}, the asymptotic variance and sensitivity of $T_n^{\WPE}$ are
        \begin{equation*}
            \Sen_{\theta}(T_n^{\WPE}) \sim \frac{1}{n J(\theta)} = \frac{3}{n} \qquad \textnormal{and}\qquad\Var_{\theta}(T_n^{\WPE}) \sim \frac{\sigma^2(\theta)}{n} = \frac{\theta^2}{5n}.
        \end{equation*}   
        Since the variance and sensitivity of the BLE can be easily computed as $\Sen_{\theta}(T_n^{\BLE}) = 4n^{-1}$ and $\Var_{\theta}(T_n^{\BLE}) = n^{-1}\theta^2/3$, this rigorously proves the behavior we observed in Figure~\ref{fig:uniform} from the Introduction; the WPE strictly dominates the BLE with respect to both the variance and the sensitivity.
\end{example}

\subsection{Multivariate case ($d\ge1$)}
 
Next we consider the general case of $d\ge 1$ in which our result on asymptotic sensitivity-efficiency of the WPE requires significantly more work.
The increased difficulty is essentially due to the fact that we no longer have a convenient representation of probability measures in terms of a flat geometry, as we had in the $d=1$ case with quantile functions.
For the sake of notational simplicity, we will restrict our discussion to the case where the parameter space $\Theta$ is one-dimensional (i.e., $p=1$).

To begin, we enumerate the following assumptions that will be used throughout this subsection.

\begin{assumption}
\label{assum:AdditionalRegularity}
Suppose that $p=1$ and that $\Pcal=\{P_{\theta}:\theta\in\Theta\}$ is DWS and satisfies:
\begin{enumerate}
\item[(i)] There is a bounded, connected, open  $U \subseteq \Rbb^d$ such that the support of $P_{\theta}$ equals $\overline{U}$, for all $\theta\in\Theta$.
  \item[(ii)] There are constants $c_0, C_0>0$ such that, for all $\theta\in\Theta$, the distribution $P_{\theta}$ has a density $p_{\theta}$ with respect to the Lebesgue measure, which satisfies  $c_0 \leq p_\theta(x) \leq C_0$  for all $x\in U$.
  \item[(iii)] The function $(x, \theta) \in \overline{U} \times \Theta  \mapsto (p_\theta(x), \Phi_\theta(x)) $ is twice continuously differentiable.
\end{enumerate}
\end{assumption}

Now recall the proof sketch of sensitivity-efficiency in the case $d=1$ (see~\eqref{eqn:WPE_ImplicitFunctionThm} and~\eqref{eqn:WPE_Sensitivity}), which involves differentiating, with respect to the samples $X_1,\ldots, X_n$, the first-order optimality conditions for $T_n^{\WPE}$, and then rearranging the resulting equation.
Our proof in the case $d\ge 1$ is similar, but the expressions for the derivatives are slightly more complicated.
So, before getting to the proof of our main result, we spend some time developing the concepts and notation for these derivatives.
For now, let $x_1,\ldots, x_n$ denote distinct points in $\Rbb^d$, not necessarily random, and let us consider the optimal transport problem from $P_{\theta}$ to $\bar P_n=\frac{1}{n}\sum_{i=1}^{n}\delta_{x_i}$.

The first concept concerns the Kantorovich dual problem; further technical detail can be found in Appendix~\ref{app:KantDuality}.
From the Kantorovich duality theorem in the semi-discrete setting (see equation~\eqref{eqn:AuxDualitySemi} or \cite[Chapter 6.4.2]{Santambrogio}), the squared-distance $W_2^2(P_\theta, \bar{P}_n)$ can be equivalently expressed as
\begin{equation}
   \max_{(b_1, \dots, b_n) \in \Rbb^n}  \left(\frac{1}{n}\sum_{i=1}^n b_i  + \int_{\Rbb^d} \min_{1\le i\le n} (\|x- x_i\|^2 - b_i ) \diff P_\theta(x)\right). 
   \label{eqn:RepresentationW2}
\end{equation}
Under Assumption \ref{assum:AdditionalRegularity}, we may use Proposition~\ref{prop:ContDiffer} to see that there is a unique maximizer for the above problem, among the set of $b \in \Rbb^n$ satisfying $\sum_{i=1}^n b_i =0$. This maximizer will be denoted by $b= b(x_1, \dots, x_n, \theta)$, and Proposition~\ref{prop:ContDiffer} states that $b$ is continuously differentiable in all its arguments.
Also let $\psi_{P_\theta \to \bar{P}_n}:\Rbb^d\to\Rbb$ be the function given by
\[ \psi_{P_\theta \to \bar{P}_n}(x)  := \min_{1\le i\le n} ( \|x- x_i\|^2 - b_i ), \]
and write $\psi_{P_\theta \to \bar{P}_n}^c$ for its $c$-transform, defined as $\psi^c(y) := \inf_{x\in \Rbb^d}(\|x-y\|^2-\psi(x))$ for any $\psi:\Rbb^d\to\Rbb$.
By Lemma \ref{lem:AuxDuality}, and the optimality of $b= b(x_1, \dots, x_n,\theta)$, we have, for all $1\le i\le n$:
\begin{equation}
    \psi^c_{P_\theta \to \bar{P}_n}(x_i) = b_i.
    \label{eqn:CTransformDualPotential}
\end{equation}
Also note that $(\psi_{P_\theta \to \bar{P}_n },\psi^c_{P_\theta \to \bar{P}_n})$ is a pair of optimal Kantorovich potentials for $P_{\theta}$ and $\bar{P}_n$.

The second concept is the decomposition of $U$ into Laguerre cells.
That is, given $b= b(x_1, \dots, x_n, \theta)$, we define the following polyhedral set
\[ V_i(x_1, \dots, x_n, \theta) := \{ x \in \Rbb^d:\|x-x_i\|^2 - b_i(\theta, x_1, \dots, x_n) \geq \|x-x_j\|^2 - b_j(\theta, x_1, \dots, x_n), \; \forall \; 1\le j\le n \}  \]
for each $1\le i\le n$.
To simplify the notation in our computations, we often remove the explicit dependence of the Laguerre cells on $\theta$ or $x_1, \dots, x_n$, and simply write $V_i(\theta)$ or $V_i$.
By construction, we have
\[ \psi_{P_\theta \to \bar{P}_n}(x) = \|x-x_i\|^2 - b_i, \qquad \mbox{for all $1\le i\le n$ and $x\in V_{i}$,}\]
 and also
\begin{align}
   \|x- x_i\|^2 - b_i = \|x- x_j\|^2 - b_j, \qquad \mbox{for all $1\le i,j\le n$ and $x \in \partial V_i \cap \partial V_j$,} \label{eqn:AuxWPELaguerre1}
\end{align}
 where $\partial A$ denotes the boundary of a set $A\subseteq\Rbb^d$.
We say that $V_i$ and $V_j$ are neighbors and write $i\sim j$ if we have $\mathcal{H}^{d-1}(\partial V_i \cap \partial V_j) >0 $, where $\mathcal{H}^{d-1}$ denotes the $(d-1)$-dimensional Hausdorff measure. Using the notation $\Gamma_{ij}:= \partial V_i \cap \partial V_j$, we see that, up to a set of zero $\mathcal{H}^{d-1}$-measure, the boundary $\partial V_i$ can be written as $\bigcup_{j \sim i}\Gamma_{ij}$. In addition, \eqref{eqn:AuxWPELaguerre1} implies that, for $i \sim j$, the set $\Gamma_{ij}$ is contained in a hyperplane with unit normal vector $(x_j - x_i)/\|x_j - x_i\|$. That is, for $i\sim j$, 
\begin{equation}
    \Gamma_{ij} \subseteq  \left\{  x \in \Rbb^d : 2 x^{\top}(x_j - x_i) + \|x_i\|^2 -\|x_j\|^2 +b_j-b_i  =0 \right\}.
    \label{eqn:AuxWPEHyperplane}
\end{equation}
Now we can give the following result on calculating the requisite derivatives.

\begin{proposition}
\label{prop:Derivatives}
Under Assumption \ref{assum:AdditionalRegularity} we have
\begin{align}
\begin{split}
 \nabla_{x_i} \frac{\partial}{\partial \theta} W_2^2(P_\theta, \bar{P}_n) =  - 2 \int_{V_i(\theta)} \Phi_\theta(x) p_\theta(x) \diff x  \hspace{1.5in}
 \\+ \sum_{j \sim i} \int_{\Gamma_{ij}} \left(2(\Phi_\theta(x))^{\top} (x_j - x_i) +  \left(  \frac{\partial b_j}{\partial \theta}  - \frac{\partial b_i}{\partial \theta}  \right) \right) \frac{x-x_i} {\|x_i - x_j\|}p_\theta(x) \diff \mathcal{H}^{d-1}(x)
\label{eqn:AuxWPEPartialDerivsDeveloped1}
\end{split}
\end{align}
and
\begin{align}
\begin{split}
\frac{\partial^2}{\partial \theta^2} W_2^2(P_\theta, \bar{P}_n) & =  \int_{\Rbb^d}  \frac{\partial}{\partial \theta}\psi_{P_\theta \rightarrow \bar{P}_n}(x) \frac{\partial}{\partial \theta }p_\theta(x) \diff x + \int_{\Rbb^d} \psi_{P_\theta \rightarrow \bar{P}_n}(x) \frac{\partial^2}{\partial \theta^2 }p_\theta(x)    \diff x.
\label{eqn:AuxWPEPartialDerivsDeveloped2}
\end{split}
\end{align}
\end{proposition}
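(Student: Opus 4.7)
The plan is to use the Kantorovich dual representation~\eqref{eqn:RepresentationW2}, which writes $W_2^2(P_\theta, \bar P_n) = F(b(\theta, x_1,\ldots, x_n), \theta, x_1,\ldots, x_n)$ for the dual objective $F(b,\theta, x_1,\ldots, x_n) := \tfrac{1}{n}\sum_i b_i + \int_{\Rbb^d}\min_k(\|x-x_k\|^2 - b_k)p_\theta(x)\diff x$ and its unique constrained maximizer $b$. Under Assumption~\ref{assum:AdditionalRegularity}, Proposition~\ref{prop:ContDiffer} provides that $b$ is continuously differentiable in all its arguments and, more strongly, that $W_2^2$ is $C^2$ in $(x_1,\ldots, x_n,\theta)$; this is what will let me exchange mixed partials, differentiate under the integral, and apply Reynolds' transport theorem. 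Applying the envelope theorem through the first-order conditions $\partial_{b_k}F = 0$ yields the two first-order identities
\begin{equation*}
\partial_\theta W_2^2 = \int_{\Rbb^d}\psi_{P_\theta\to\bar P_n}(x)\,\partial_\theta p_\theta(x)\,\diff x, \qquad \nabla_{x_i} W_2^2 = -2\int_{V_i(\theta)}(x-x_i)\,p_\theta(x)\,\diff x,
\end{equation*}
where the second uses that on the interior of $V_k$ the min is attained at the $k$-th index, so the at-fixed-$b$ gradient with respect to $x_i$ equals $-2(x-x_i)\mathbf{1}\{k=i\}$.

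Identity~\eqref{eqn:AuxWPEPartialDerivsDeveloped2} then follows by applying $\partial_\theta$ once more to the first identity and invoking the product rule under the integral; Assumption~\ref{assum:AdditionalRegularity}(iii) together with the $C^1$-regularity of $b$ provides uniform bounds for $\partial_\theta\psi_\theta$ and $\partial_\theta^2 p_\theta$ on $\overline U$ locally in $\theta$, justifying the exchange of derivative and integral.

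For identity~\eqref{eqn:AuxWPEPartialDerivsDeveloped1}, I exchange mixed partials and compute $\partial_\theta\nabla_{x_i} W_2^2$ by differentiating the second first-order identity in $\theta$. Since $V_i(\theta)$ depends on $\theta$ only through $b(\theta, x_1,\ldots, x_n)$, Reynolds' transport theorem produces a bulk term $-2\int_{V_i}(x-x_i)\,\partial_\theta p_\theta\,\diff x$ plus a boundary term supported on $\bigcup_{j\sim i}\Gamma_{ij}$, weighted by the normal velocity of each interface. Differentiating the defining identity $\|x-x_i\|^2 - b_i(\theta) = \|x-x_j\|^2 - b_j(\theta)$ of $\Gamma_{ij}$ along a curve $\theta\mapsto x(\theta)\in\Gamma_{ij}(\theta)$ and using the outward unit normal $\hat n_{ij} = (x_j - x_i)/\|x_j - x_i\|$ from~\eqref{eqn:AuxWPEHyperplane} gives the normal velocity $v_n^\theta = (\partial_\theta b_i - \partial_\theta b_j)/(2\|x_j - x_i\|)$, producing the $(\partial_\theta b_j - \partial_\theta b_i)$ contribution to the boundary integrand. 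Finally, the bulk term is rewritten via the continuity equation $\partial_\theta p_\theta + \Div(p_\theta \Phi_\theta) = 0$ (which holds classically under Assumption~\ref{assum:AdditionalRegularity}(iii), with velocity field $\Phi_\theta$ identified from Lemma~\ref{lem:DWS-potential}) followed by integration by parts on $V_i$; this converts the bulk integral into $-2\int_{V_i}p_\theta \Phi_\theta\,\diff x$ together with an extra boundary contribution $\sum_{j\sim i}\int_{\Gamma_{ij}}(x-x_i)\cdot 2\Phi_\theta^\top(x_j-x_i)/\|x_j-x_i\|\,p_\theta\,\diff\mathcal{H}^{d-1}$. Collecting all boundary terms on $\Gamma_{ij}$ yields exactly~\eqref{eqn:AuxWPEPartialDerivsDeveloped1}.

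The main technical obstacle is closing the integration by parts on $V_i$, whose boundary consists of internal interfaces $\Gamma_{ij}$ together with a portion of $\partial U$ (the boundary of the support of $P_\theta$). On the latter, the flux $p_\theta\Phi_\theta\cdot \hat n_U$ must vanish, which follows from the invariance of the support $\overline U$ under the $\theta$-flow: since $P_{\theta+t}$ remains supported in $\overline U$ for all small $t$, the continuity equation forces the normal component of $p_\theta\Phi_\theta$ to vanish in an appropriate trace sense on $\partial U$. Once this no-flux condition is established, the computation of the normal velocity of each moving Laguerre interface is a clean implicit differentiation, and the remaining steps are bookkeeping.
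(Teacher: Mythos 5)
Your argument follows the same overall strategy as the paper's: derive the two first-order identities $\partial_\theta W_2^2=\int\psi_{P_\theta\to\bar P_n}\,\partial_\theta p_\theta\,\diff x$ and $\nabla_{x_i}W_2^2=2\int_{V_i}(x_i-x)\,p_\theta\,\diff x$ (whether via the envelope theorem as you do, or via direct differentiation of $\sum_j\int_{V_j}(\|x_j-x\|^2-b_j)p_\theta\,\diff x$ with cancelling boundary terms, as the paper does, is immaterial); then Reynolds-transport the second one in $\theta$, compute the normal velocity of the moving interface by implicit differentiation of~\eqref{eqn:AuxWPEHyperplane} (your $v_n^\theta=(\partial_\theta b_i-\partial_\theta b_j)/(2\|x_j-x_i\|)$ matches~\eqref{eqn:AuxWPEFlowMovingRegion}), rewrite the bulk term via the strong continuity equation plus integration by parts, and close with Clairaut. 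The computations are correct.

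The one place you genuinely go beyond the paper is the boundary issue you flag at the end. When you integrate $\int_{V_i}(x_i-x)\,\Div(\Phi_\theta p_\theta)\,\diff x$ by parts, the correct domain is $V_i\cap U$ (since $p_\theta$ jumps to zero across $\partial U$ under Assumption~\ref{assum:AdditionalRegularity}(ii)), so $\partial(V_i\cap U)$ includes a piece of $\partial U$ in addition to the Laguerre interfaces $\Gamma_{ij}$. The paper's proof silently replaces $\partial V_i$ by $\bigcup_{j\sim i}\Gamma_{ij}$ without addressing the flux through $V_i\cap\partial U$. Your no-flux argument---comparing the weak continuity equation on $\Rbb^d$ (from DWS and Lemma~\ref{lem:DWS-potential}) with the strong continuity equation on $U$ (from Assumption~\ref{assum:AdditionalRegularity}(iii)) to conclude $(\Phi_\theta p_\theta)\cdot\hat n_U=0$ on $\partial U$---is exactly what is needed to kill this extra term, and it is a welcome clarification of a step the paper leaves implicit. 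Note that the same no-flux observation also justifies ignoring $V_i\cap\partial U$ in the earlier Reynolds-transport step, where the paper's $\int_{\partial V_i}$ should really be $\int_{\partial(V_i\cap U)}$ as well (though there the $\partial U$ piece has zero normal velocity, so it vanishes for a different reason).

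One minor overstatement: Proposition~\ref{prop:ContDiffer} gives only $C^1$ regularity of $b$, not that $W_2^2$ is $C^2$; the paper justifies Clairaut by separately verifying continuity of the one mixed partial $\nabla_{x_i}\partial_\theta W_2^2$ from the explicit formula. You should do the same rather than assert global $C^2$ regularity from Proposition~\ref{prop:ContDiffer}.
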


Now we give our main result, in which we use $(\psi_{P_\theta \to P_{\theta}^*}, \psi^c_{P_{\theta} \to P_{\theta^*}} )$ to denote the unique (up to constants) pair of optimal Kantorovich potential between $P_\theta$ and $P_{\theta^*}$, as introduced in Appendix \ref{app:KantDuality}. 

\begin{theorem}\label{thm:WPE-efficient-gen}
    Suppose that $\Pcal$ satisfies Assumption \ref{assum:AdditionalRegularity}.
    If $X_1,X_2,\ldots$ are i.i.d. samples from $P_{\theta^{\ast}}$ and $T_n^{\WPE}$ is a consistent estimator of $\theta^{\ast}\in\Theta$, and if we have
    \begin{itemize}
        \item[(i)] $\sup_{\theta\in\Theta}\|\nabla_{x}\frac{\partial}{\partial \theta} \psi^{c}_{P_{\theta}\to\bar P_n}-\nabla_{x}\frac{\partial}{\partial \theta} \psi^{c}_{P_{\theta}\to P_{\theta^{\ast}}}\|_{L^{\infty}(U)} = o_{P_{\theta^{\ast}}}(1)$, and
        \item[(ii)] $\frac{1}{n} \sum_{i=1}^n  \left(n \int_{\partial V_i} \|x-X_i\| p_{\hat{\theta}_n}(x) \diff \mathcal{H}^{d-1}(x)  \right)^2 = O_{P_{\theta^{\ast}}}(1)$,
    \end{itemize}
    then we have
    \begin{equation*}
        n\sum_{i=1}^{n}\left(\nabla_{x_i}T_n^{\WPE}(X_1,\ldots, X_n)\right)^{\top}\left(\nabla_{x_i}T_n^{\WPE}(X_1,\ldots, X_n)\right) \to (J(\theta^{\ast}))^{-1}
    \end{equation*}
    in probability as $n\to\infty$, for each $\theta^{\ast}\in\Theta$.
\end{theorem}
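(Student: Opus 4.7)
The plan is to carry out, rigorously in the multivariate case, the outline given in~\eqref{eqn:WPE_ImplicitFunctionThm}--\eqref{eqn:WPE_Sensitivity}, relying on Proposition~\ref{prop:Derivatives} to make the derivative expressions explicit and on assumptions~(i)--(ii) to control the resulting error terms. Writing $\hat\theta_n := T_n^{\WPE}(X_1,\ldots,X_n)$, the first-order optimality condition $\tfrac{\partial}{\partial\theta}W_2^2(P_\theta,\bar P_n)|_{\theta=\hat\theta_n}=0$ together with the implicit function theorem produces
\begin{equation*}
\nabla_{x_i}\hat\theta_n = -A_n^{-1}\nabla_{x_i}\tfrac{\partial}{\partial\theta}W_2^2(P_\theta,\bar P_n)\big|_{\theta=\hat\theta_n}, \qquad A_n := \tfrac{\partial^2}{\partial\theta^2}W_2^2(P_\theta,\bar P_n)\big|_{\theta=\hat\theta_n},
\end{equation*}
so that $n\sum_{i=1}^n\|\nabla_{x_i}\hat\theta_n\|^2 = A_n^{-2}B_n$ with $B_n := \tfrac{1}{n}\sum_{i=1}^n\|n\nabla_{x_i}\tfrac{\partial}{\partial\theta}W_2^2(P_\theta,\bar P_n)|_{\theta=\hat\theta_n}\|^2$. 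The theorem thus reduces to proving the two convergences $A_n \to 2J(\theta^\ast)$ and $B_n \to 4J(\theta^\ast)$ in probability, whence $A_n^{-2}B_n\to J(\theta^\ast)^{-1}$.

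For the convergence of $A_n$, I would apply identity~\eqref{eqn:AuxWPEPartialDerivsDeveloped2} of Proposition~\ref{prop:Derivatives}. Assumption~(i) permits one to replace the empirical quantity $\nabla_x\tfrac{\partial}{\partial\theta}\psi^c_{P_\theta\to\bar P_n}$ by the population analogue $\nabla_x\tfrac{\partial}{\partial\theta}\psi^c_{P_\theta\to P_{\theta^\ast}}$ uniformly on $U$; combined with the consistency $\hat\theta_n\to\theta^\ast$ and the $C^2$-regularity from Assumption~\ref{assum:AdditionalRegularity}(iii), this shows that $A_n$ is approximated in probability by $\tfrac{\partial^2}{\partial\theta^2}W_2^2(P_\theta,P_{\theta^\ast})|_{\theta=\theta^\ast}$. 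The DWS condition (Definition~\ref{def:DWS}) directly yields the Taylor expansion $W_2^2(P_{\theta^\ast+t},P_{\theta^\ast})=t^2 J(\theta^\ast)+o(t^2)$, so this limit equals $2J(\theta^\ast)$.

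For the convergence of $B_n$, I would apply identity~\eqref{eqn:AuxWPEPartialDerivsDeveloped1} to split each $n\nabla_{x_i}\tfrac{\partial}{\partial\theta}W_2^2(P_\theta,\bar P_n)|_{\theta=\hat\theta_n}$ into a volume integral over $V_i(\hat\theta_n)$ and a sum of boundary integrals over the faces $\Gamma_{ij}$. Under the ellipticity and vanishing-diameter control of the Laguerre cells afforded by Assumption~\ref{assum:AdditionalRegularity}(i)--(ii), the identity $\int_{V_i(\hat\theta_n)}p_{\hat\theta_n}\diff x = 1/n$ together with Lipschitz continuity of $\Phi_{\hat\theta_n}$ yields $-2n\int_{V_i(\hat\theta_n)}\Phi_{\hat\theta_n}(x)p_{\hat\theta_n}(x)\diff x = -2\Phi_{\hat\theta_n}(X_i) + o_{P_{\theta^\ast}}(1)$, uniformly in $i$. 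For the boundary term, the key ingredient is the first-order cancellation $2\Phi_\theta(x)^{\top}(x_j-x_i) + (\partial_\theta b_j - \partial_\theta b_i) = o(\|x_j-x_i\|)$, which follows by combining the identity $\partial_\theta b_i = \partial_\theta\psi^c_{P_\theta\to\bar P_n}(x_i)$ from~\eqref{eqn:CTransformDualPotential}, the optimality relation $\nabla_y\psi^c_{P_\theta\to P_{\theta^\ast}}(y) = 2(y - \transport_{P_{\theta^\ast}\to P_\theta}(y))$, its $\theta$-derivative $\nabla_y\partial_\theta\psi^c(y) = -2\Phi_{\theta^\ast}(y)+o(1)$ (via Definition~\ref{def:DWS}), and the uniform approximation from assumption~(i); assumption~(ii) then caps the averaged squared boundary-masses at $O_{P_{\theta^\ast}}(1)$, so that the boundary contribution to $B_n$ vanishes in probability. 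The weak law of large numbers applied to $\tfrac{4}{n}\sum_i\|\Phi_{\hat\theta_n}(X_i)\|^2$, together with consistency of $\hat\theta_n$ and continuity of $\theta\mapsto\Phi_\theta$, then gives $B_n\to 4J(\theta^\ast)$.

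The main obstacle is the control of the boundary integrals in $B_n$. Naively these are of the same order as the interior terms, and their negligibility rests on two delicate points: (a) a first-order cancellation between the $\theta$-derivative of the semi-discrete Kantorovich potential and the transport linearization $\Phi_\theta$, whose empirical version is exactly the content of assumption~(i) on consistency of mixed partial derivatives, and (b) diameter and ellipticity control for the Laguerre cells of a random power diagram, absorbed by assumption~(ii). These are precisely the two outstanding technical issues in statistical semi-discrete optimal transport flagged in the Introduction; once they are admitted as hypotheses, the remainder of the argument proceeds cleanly via the implicit function theorem and the weak law of large numbers.
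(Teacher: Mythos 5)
Your proposal follows essentially the same route as the paper: start from the implicit-function-theorem identity \eqref{eqn:WPE_Sensitivity}, use Proposition~\ref{prop:Derivatives} to make the derivatives explicit, split the summation into a bulk (volume) term and a boundary term, control the bulk via cell-mass normalization and Lipschitz continuity, cancel the boundary term against the approximation of $\partial_\theta b_j - \partial_\theta b_i$ using \eqref{eqn:CTransformDualPotential}, \eqref{aux:GradientDualPotential}, and assumption~(i), and cap the residual boundary mass using assumption~(ii). The factor $(2J(\theta^\ast))^{-2}\cdot 4J(\theta^\ast)=(J(\theta^\ast))^{-1}$ is identical. The one place where your argument takes a genuinely different (and arguably cleaner) path is the Hessian limit $A_n\to 2J(\theta^\ast)$: you invoke the DWS Taylor expansion $W_2^2(P_{\theta^\ast+t},P_{\theta^\ast})=t^2J(\theta^\ast)+o(t^2)$ directly, whereas the paper evaluates \eqref{eqn:AuxWPEPartialDerivsDeveloped2} term-by-term, noting $\psi_{P_\theta\to P_{\theta^\ast}}\equiv 0$ at $\theta=\theta^\ast$ to kill the second-derivative term and then applying the continuity equation plus the divergence theorem plus \eqref{aux:GradientDualPotential} to reduce the first term to $2\int\|\Phi_{\theta^\ast}\|^2\,\diff P_{\theta^\ast}=2J(\theta^\ast)$. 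Your route is shorter but leans on identifying the second-order Taylor coefficient with the second derivative, which requires the $C^2$ regularity in $\theta$ supplied by Assumption~\ref{assum:AdditionalRegularity}(iii) — worth saying explicitly, since a priori DWS only gives a little-$o$ expansion, not smoothness of $\theta\mapsto W_2^2(P_\theta,P_{\theta^\ast})$.
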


We emphasize that the assumptions used in Theorem~\ref{thm:WPE-efficient-gen} are rather strict, and that we view our result as somewhat of a proof-of-concept.
In these final remarks we explain these assumptions in more detail.

\begin{remark}[on Assumption~\ref{assum:AdditionalRegularity} of Theorem~\ref{thm:WPE-efficient-gen}]\label{rem:Assumps}
    Although Assumption~\ref{assum:AdditionalRegularity} consists of conditions directly on the model $\Pcal$ (rather than assumption on the empirical optimal transport problem itself, as in $(i)$ and $(ii)$), the requisite regularity is too strict even to include the cases of location families and scale families; this is primarily due to the fact that we require all of our measures to have the same bounded support.
    While we envision future work that significantly relaxes this regularity, we note that most works in statistical optimal transport indeed rely on similar conditions \cite{HuetterRigollet, DebGhosalSen,Gunsilius_2022}.
    We are only aware of the recent work \cite{ManoleBalakrishnan} which concludes powerful convergence results without some bounded support condition.
\end{remark}

\begin{remark}[on assumption $(i)$ of Theorem~\ref{thm:WPE-efficient-gen}]\label{rem:uniform-cvg-pot}
It may appear that assumption $(i)$ is rather strict (since it requires uniform convergence of some higher-order derivatives of the optimal dual potentials), but we argue through some basic examples that it is not unreasonable; see Appendix~\ref{app:discussions-asymp} for more detailed calculations in these examples.
For instance, in the general location family setting of Example~\ref{ex:loc-fam}, we have
   \[ \nabla_x \frac{\partial}{\partial \theta} \psi^c_{P_\theta \to \bar{P}_n}(x) = \nabla_x \frac{\partial}{\partial \theta} \psi^c_{P_\theta \to {P}_{\theta^*}}(x) = -2\idmat_d\]
for all $\theta\in\Theta$ and $x\in \Rbb^d$ almost surely, so the condition is automatically satisfied.
Additionally, in the general scale family of Example~\ref{ex:var-fam}, we have

\[   \nabla_x \frac{\partial}{\partial \theta} \psi^c_{P_\theta \to \bar{P}_n}(x) -  \nabla_x \frac{\partial}{\partial \theta} \psi^c_{P_\theta \to P_{\theta^*}}(x) =   \nabla_x  \psi^c_{P_1 \to \bar{P}_{n}}(x)  -\nabla_x  \psi^c_{P_1 \to P_{\theta^*}}(x), \]
for all $\theta\in\Theta$ and $x\in \Rbb^d$ almost surely, so the condition reduces to uniform consistency for the gradients of the potentials for the single distribution $P_1$.
(However, recall from Remark~\ref{rem:Assumps} that both of these examples are excluded from the theorem because of Assumption~\ref{assum:AdditionalRegularity}.)
As such, we believe one could verify this condition in some more difficult models of interest, albeit with significantly more work; see \cite{GonzalezSanzSheng} for similar results under additional regularity that need not hold in our applications.
\end{remark}

\begin{remark}[on assumption $(ii)$ of Theorem~\ref{thm:WPE-efficient-gen}]
    In order to argue that this condition is not so unreasonable, we give a heuristic calculation to explain why it may be expected to hold in some models of interest. 
    First, recall that all the densities $p_\theta$ are uniformly bounded away from zero, hence $P_{\theta}$-volumes are uniformly comparable to volumes under the Lebesgue measure.
    Thus, since each Laguerre cell $V_i$ has mass $n^{-1}$ under $P_{\hat{\theta}_n}$ by definition, it follows that the Lebesgue measure of $V_i$ is $n^{-1}$ up to constants.
    If $V_i$ is uniformly elliptic (that is, if we can inscribe a ball whose volume is a constant order multiple of the volume of the cell), then its perimeter has size $n^{-(d-1)/d}$ and the typical distance between a point $x \in \partial V_i$ and $x_i$ is $n^{-\frac{1}{d}}$, both up to constants; thus, the collection of random variables appearing in assumption $(ii)$ would indeed be tight.
    However, making this intuition precise requires a fine analysis of the geometry of the random Laguerre diagram induced by the data, and specifically requires estimates on the surface area of the $(d-1)$-dimensional faces of the Laguerre cells. Some works that have analyzed related geometric features of random Laguerre diagrams (albeit in the Poisson point process setting) include \cite{Laguerre1} and the recent book chapter \cite{Redenbach2025}.
\end{remark}

\section{Conclusion}

In this work we developed a complete theory of the fundamental limits of the sensitivity of unbiased estimation; this includes a Wasserstein-Cram\'er-Rao lower bound (Theorem~\ref{thm:WCR}) for unbiased estimators satisfying some basic regularity conditions, and various methods for constructing estimators that achieve the lower bound, either exactly (Theorem~\ref{thm:efficiency-W-fam}) or asymptotically (Theorem~\ref{thm:WPE-efficient}, Theorem~\ref{thm:WPE-efficient-gen}).
We also analyzed many concrete statistical examples.

Our fundamental insight is geometric, namely that the theory of sensitivity comes from the Wasserstein geometry in precisely the same way that the theory of variance comes from the Fisher-Rao (equivalently, Hellinger) geometry.
In fact, apart from some technical considerations, the proof of each lower bound is simply an application of Cauchy-Schwarz in the tangent space for a suitable geometry on the model.
This raises the question of what theory would arise if one considered other geometries beyond Fisher-Rao, Hellinger, and Wasserstein.
What measures of instability would arise?
What notion of information appears in the lower bound?
How can one achieve the lower bound in statistical settings of interest?

We view this broad research program to be, in some sense, a mathematical foundation for the notion of ``stability'' from the predictability-computability-stability (PCS) framework of veridical data science, as initiated in \cite{YuStability} and developed through many recent works (e.g., \cite{YuStabilityHDSR, BarterYu, LimYu, YuKumbier}).
Within this framework, it is advocated that statistical procedures should be \textit{stable}, which ``includes, but is much broader than, the concept of sampling variability'' \cite{YuKumbier}.
    More generally, it is argued that statistical procedures should be stable against ``reasonable'' variations in the data or the model, the interpretation of which can vary across scientific applications.
    We emphasize the specification of a class of reasonable variations to a model is akin to considering a particular geometry on that model; such a specification describes the ``directions'' in which each distribution may move (e.g., resampling corresponds to a variation by re-weighting as in the Hellinger and Fisher-Rao geometries, and adding Gaussian noise corresponds to a variation by additive perturbation as in the Wasserstein geometry).

    In this spirit, an interesting avenue for future work is to determine how to carry out this research program for discrete statistical models of interest; it seems that additive Gaussian noise is not a ``reasonable'' variation in this setting, and one should rather consider variations that are connected to the structure of the problem at hand (e.g., insertion or deletion of edges for graph data, transpositions for permutation data).

    Lastly, we hope that our study of the asymptotic sensitivity of the Wasserstein projection estimator (WPE) will inspire further work in statistical optimal transport.
    For example, verifying the technical hypotheses of Theorem~\ref{thm:WPE-efficient-gen}, or developing an entirely new strategy of proof, will require a much finer understanding of the semi-discrete optimal transport problem than what is currently known.

\section*{Acknowledgments}
The authors would like to thank Nabarun Deb, Tudor Manole, Debarghya Mukherjee, and Bin Yu for helpful conversations. AQJ and BS acknowledge the support of NSF grants DMS-2311062 and DMS-2515520. NGT was supported by NSF-DMS grant 2236447.

\begin{appendix}
\section{On the Continuity Equation}\label{app:cont-eq}

In equation \eqref{eqn:ContEq} of Section~\ref{sec:prelim} we discussed the \textit{continuity equation} $\partial_t\mu_t + \Div(v_t\mu_t) = 0$ and the interpretation of weak sense solutions to this PDE.
This appendix contains further technical discussion of these points, which are needed in many of the proofs in the main body. First let us recall the definition of absolutely continuous functions. 

\begin{definition}[absolute continuity]
For $p \geq 1$ and $a\leq b $, we say that a function $\gamma: [a, b] \rightarrow \Rbb^m$ belongs to $\mathrm{AC}^p(a,b; \Rbb^m)$ if there is $h \in L^p(a,b)$ such that
\[ | \gamma(t)  -\gamma(s)   | \leq \int_s^t h(r) \diff r\]
\label{def:AC}
for all $a\leq s \leq t \leq b$.
\end{definition}

As the following result shows, absolutely continuous functions are precisely the functions for which the fundamental theorem of calculus holds. (See \cite[Theorem 3.20]{Leoni} for a proof.)

\begin{theorem}
For $p \geq 1$ and $a\leq b $, a function $\gamma$ belongs to $\mathrm{AC}^p(a,b; \Rbb^m)$ if and only if the following conditions hold:
\begin{enumerate}
    \item[(i)] $\gamma$ is continuous in $[a,b]$,
    \item[(ii)] $\gamma$ is differentiable for Lebesgue a.e. $t \in [a,b]$ and $\dot{\gamma}(t) \in L^p(a,b; \mathbb{R}^m)$, and
    \item[(iii)] $\gamma(t) = \gamma(s) + \int_s^t \dot{\gamma}(r) \diff r$, for all $a \leq s\leq t \leq b$.  
\end{enumerate}
\end{theorem}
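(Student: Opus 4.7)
The plan is to prove the two implications separately. The reverse direction is immediate: assuming (i), (ii), (iii), one sets $h(r):=\|\dot{\gamma}(r)\|$, which belongs to $L^p(a,b)$ by (ii); then (iii) together with the triangle inequality for the vector-valued integral yields $\|\gamma(t)-\gamma(s)\|\le\int_s^t h(r)\diff r$ for all $a\le s\le t\le b$, which is exactly the $\mathrm{AC}^p$ bound from Definition~\ref{def:AC}.

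For the forward direction, suppose $\gamma\in\mathrm{AC}^p(a,b;\Rbb^m)$ with witness $h\in L^p(a,b)$, and note $h\in L^1(a,b)$ since the interval is bounded. I would first establish continuity: the $\mathrm{AC}^p$ bound shows $\|\gamma(t)-\gamma(s)\|\le\int_s^t h(r)\diff r$, and the right-hand side tends to zero as $t\to s$ by the absolute continuity of the Lebesgue integral of $h$. Next, I would upgrade this to classical absolute continuity: for any finite disjoint family $(s_k,t_k)$ with total length at most $\delta$,
\begin{equation*}
    \sum_k \|\gamma(t_k)-\gamma(s_k)\|\le \sum_k\int_{s_k}^{t_k}h(r)\diff r = \int_{\bigcup_k (s_k,t_k)} h(r)\diff r,
\end{equation*}
which is small by the absolute continuity of the Lebesgue integral applied to $h$.

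The third step invokes the classical one-dimensional theorem that real-valued absolutely continuous functions are differentiable Lebesgue almost everywhere with derivative in $L^1$, and satisfy the fundamental theorem of calculus. Applied component-by-component to $\gamma$, this yields a.e.~differentiability of $\gamma$ and the representation $\gamma(t)-\gamma(s)=\int_s^t\dot{\gamma}(r)\diff r$ in (iii), with $\dot{\gamma}\in L^1(a,b;\Rbb^m)$.

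The main remaining obstacle — the only nontrivial step — is to upgrade the integrability of $\dot{\gamma}$ from $L^1$ to $L^p$. This is handled by the Lebesgue differentiation theorem: at every Lebesgue point $t$ of $h$ at which $\gamma$ is also differentiable (a full-measure set), the $\mathrm{AC}^p$ bound gives
\begin{equation*}
    \|\dot{\gamma}(t)\| = \lim_{s\to t^{+}} \frac{\|\gamma(s)-\gamma(t)\|}{s-t}\le \lim_{s\to t^{+}}\frac{1}{s-t}\int_t^s h(r)\diff r = h(t),
\end{equation*}
so $\|\dot{\gamma}\|\le h$ almost everywhere and hence $\dot{\gamma}\in L^p(a,b;\Rbb^m)$. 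This pointwise comparison is the heart of the argument, and everything else reduces to standard real-analysis facts about absolutely continuous functions on the line.
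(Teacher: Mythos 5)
Your proof is correct. Note that the paper itself does not prove this result; it simply cites \cite[Theorem 3.20]{Leoni}, so there is no in-paper argument to compare against. That said, your argument is complete and uses the standard reduction: the reverse direction is immediate from $h := \|\dot\gamma\|$ and the triangle inequality for the Bochner integral; the forward direction reduces to the scalar fundamental theorem of calculus for absolutely continuous functions (applied component-by-component), which delivers a.e.\ differentiability, $\dot\gamma\in L^1$, and representation (iii). The one genuinely non-automatic step — upgrading $\dot\gamma$ from $L^1$ to $L^p$ — is handled exactly right: at any common Lebesgue point of $h$ and differentiability point of $\gamma$ (a full-measure set), the defining inequality $\|\gamma(s)-\gamma(t)\|\le\int_t^s h$ combined with the Lebesgue differentiation theorem gives $\|\dot\gamma(t)\|\le h(t)$, hence $\dot\gamma\in L^p$ by domination. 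This is the same pointwise-comparison device Leoni uses, so your proof is, for practical purposes, a self-contained rendering of the cited theorem rather than a different route.
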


In the main body we stated and utilized Proposition~\ref{prop:ContinuityEqnLocallyLipschitz}, which is a consequence of the following more general result.
Recall that by the \textit{distributional derivative} of any function $g: [a,b] \to \Rbb$ we mean a distribution $g':C^{\infty}_{c}(\Rbb)\to\Rbb$ satisfying $\langle g',\phi\rangle = - \int_a^b g(t) \varphi'(t) \diff t$ for all test functions $\phi\in C_c^{\infty}(\Rbb)$; to understand this definition, note that, if $g$ is $C^1(\Rbb)$ and $\phi$ is supported on $(a,b)$, then the equality $\int_{a}^{b}g'(t)\phi(t)\diff t = - \int_a^b g(t) \varphi'(t) \diff t$ follows from integration by parts, so the distributional derivative coincides with the pointwise derivative. 

\begin{proposition}
Fix $a,b\in \Rbb$ with $a<b$ and suppose that $(\{\mu_t\}_{a\le t\le b}, \{v_t\}_{a\le t\le b})$ is a solution to the continuity equation in $\Rbb^m$ satisfying the integrability condition
\[ \int_{a}^b \int_{\Rbb^m} |v_t(x)|^p \diff \mu_t(x)  \diff t <\infty  \]
for some $p \geq 1$. Suppose, in addition, that for every $t \in [a,b]$ we have $\mu_t \in \mathcal{P}_{2,\mathrm{ac}} (\Rbb^m)$.
Then for every locally Lipschitz function $f: \Rbb^m \rightarrow \Rbb$ satisfying
\begin{enumerate}
\item[(i)] $\int_{\Rbb^m} | f (x)|\diff \mu_t(x) < \infty$, for all $t \in (a,b)$,
\item[(ii)] $ \int_a^b \int_{\Rbb^m} |\nabla f (x)|^q \diff \mu_t(x) \diff t < \infty$, where $p^{-1} + q^{-1} = 1$,
\end{enumerate}
the function 
\[ t \in [a,b] \mapsto \int_{\Rbb^m} f(x) \diff \mu_t(x) \]
belongs to $\mathrm{AC}^1(a,b; \mathbb{R})$, and has distributional derivative given by
\begin{equation}
   \frac{\diff}{\diff t} \int_{\Rbb^m} f(x) \diff \mu_t(x) = \int_{\Rbb^m} (\nabla f(x))^{\top}    v_t(x) \diff \mu_t(x).
\end{equation}
\end{proposition}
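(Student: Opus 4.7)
The plan is to reduce the statement to the standard distributional form of the continuity equation, in which the spatial test function is smooth and compactly supported: by \cite[Remark~8.1.1]{AmbrosioGigliSavare}, the hypotheses on $(\{\mu_t\},\{v_t\})$ imply that for any $\phi \in C_c^\infty((a,b))$ and $g \in C_c^\infty(\Rbb^m)$ one has
\begin{equation*}
-\int_a^b \phi'(t) \int_{\Rbb^m} g(x)\,\diff \mu_t(x)\,\diff t = \int_a^b \phi(t) \int_{\Rbb^m} (\nabla g(x))^\top v_t(x)\,\diff \mu_t(x)\,\diff t.
\end{equation*}
I would therefore approximate the given locally Lipschitz $f$ by a sequence $\{f_k\}_{k\geq 1}\subseteq C_c^\infty(\Rbb^m)$ in a way strong enough that both sides of the above identity pass to the limit with $g = f_k$. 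Once this is accomplished, the function $F(t) := \int f\diff \mu_t$ acquires a distributional derivative represented by $t\mapsto \int (\nabla f)^\top v_t \diff \mu_t$, which lies in $L^1(a,b)$ by a double Hölder inequality, in space and then in time, using assumption $(ii)$ and the integrability hypothesis on $v_t$; this places $F$ in $\mathrm{AC}^1(a,b;\Rbb)$ and yields the claimed representation of its distributional derivative.

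I would construct the approximating sequence in three successive stages. First, truncate the values: for $M>0$ set $f_M := \max(-M,\min(M,f))$, so that $|f_M|\leq M$ and, by Rademacher's theorem, $\nabla f_M = \nabla f\cdot \mathbf{1}_{\{|f|\leq M\}}$ Lebesgue a.e. and in particular $|\nabla f_M|\leq |\nabla f|$. Second, apply a spatial cutoff: choose $\chi_R \in C_c^\infty(\Rbb^m)$ with $\chi_R \equiv 1$ on $B_R$, supported in $B_{2R}$, and $\|\nabla \chi_R\|_\infty \leq C/R$, and set $f_{M,R} := \chi_R f_M$. Third, mollify: let $\eta_\epsilon$ be a standard mollifier and put $f_{M,R,\epsilon} := f_{M,R}*\eta_\epsilon \in C_c^\infty(\Rbb^m)$. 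I would then pass to the limits in the order $\epsilon \downarrow 0$, then $R\to \infty$, and finally $M\to \infty$. At each stage the derivative side is handled via the Hölder estimate
\begin{equation*}
\int_a^b |\phi(t)|\left|\int_{\Rbb^m}(\nabla h)^\top v_t\,\diff \mu_t\right|\diff t \leq \|\phi\|_\infty \|\nabla h\|_{L^q(\mu_t \otimes \diff t)} \|v\|_{L^p(\mu_t \otimes \diff t)},
\end{equation*}
applied to the appropriate differences, combined with dominated convergence; the absolute continuity of each $\mu_t$ with respect to the Lebesgue measure is crucial to transfer Lebesgue-a.e. convergences (from mollification and from Rademacher's theorem) into $\mu_t$-a.e. ones. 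The function side $\int f_{M,R,\epsilon}\diff \mu_t \to \int f\diff \mu_t$ is treated similarly, with dominators $M$, $|f_M|$, and $|f|$, the last being $\mu_t$-integrable by $(i)$.

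The hard part is controlling the cutoff gradient error $f_M\,\nabla \chi_R$ in the $R\to\infty$ limit, since one cannot obtain an a priori $L^q(\mu_t)$ bound on $f$ from assumptions $(i)$ and $(ii)$ alone. This is precisely why the value-truncation stage is performed \emph{before} the cutoff stage: with $M$ fixed, the crude bound $|f_M\,\nabla \chi_R|\leq MC/R$, together with the fact that each $\mu_t$ is a probability measure, yields
\begin{equation*}
\int_a^b \int_{\Rbb^m} |f_M\,\nabla \chi_R|^q\,\diff \mu_t\,\diff t \leq \left(\frac{MC}{R}\right)^q (b-a) \xrightarrow[R\to\infty]{} 0,
\end{equation*}
so this error vanishes. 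The complementary contribution $\chi_R \nabla f_M \to \nabla f_M$ converges in $L^q(\mu_t \otimes \diff t)$ by dominated convergence with dominator $|\nabla f|$, which is integrable in this norm by $(ii)$. The final truncation limit $M\to \infty$ is handled by the same principle, with dominators $|f|$ and $|\nabla f|$, closing the argument.
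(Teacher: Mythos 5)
Your route is genuinely different from the paper's. The paper is Lagrangian: it invokes the superposition principle \cite[Theorem~8.2.1]{AmbrosioGigliSavare} to disintegrate $\mu_t$ over integral curves of $v_t$, reduces the claim to a chain rule $\frac{\diff}{\diff t}f(\gamma(t)) = (\nabla f(\gamma(t)))^{\top}\dot\gamma(t)$ along a.e.\ curve $\gamma$ (where the absolute continuity of the $\mu_t$'s is used exactly once, to show the trajectories spend null time at the non-differentiability set of $f$), and then performs a single spatial cutoff. Your argument is Eulerian: you stay with the weak space-time form of the continuity equation and approximate $f$ directly by $C_c^\infty$ test functions through truncation, cutoff, and mollification. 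Your route is more elementary in that it avoids the superposition theorem, at the cost of tracking three nested limits rather than one.

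There is, however, a genuine gap in your final $M\to\infty$ passage, and it is on the function side rather than the gradient side. You propose to let $M\to\infty$ in the distributional pairing
\begin{equation*}
-\int_a^b \phi'(t)\left(\int_{\Rbb^m} f_M\,\diff\mu_t\right)\diff t = \int_a^b\phi(t)\left(\int_{\Rbb^m}(\nabla f_M)^{\top} v_t\,\diff\mu_t\right)\diff t
\end{equation*}
using $|f|$ as dominator on the left. But hypothesis~$(i)$ only gives $\int|f|\,\diff\mu_t<\infty$ \emph{for each fixed} $t$; it does not guarantee that $t\mapsto\int|f|\,\diff\mu_t$ is locally Lebesgue-integrable, which is what dominated convergence under the joint $\diff\mu_t\,\diff t$ integral requires. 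In fact, local integrability (indeed boundedness) of $t\mapsto\int f\,\diff\mu_t$ is part of the \emph{conclusion}: it follows once absolute continuity in $t$ is established, and cannot be assumed beforehand. Your earlier limits do not suffer from this: for $\epsilon\downarrow 0$ and $R\to\infty$ the dominator is the constant $M$, and on the gradient side at every stage H\"older together with hypothesis~$(ii)$ and the assumed $L^p(\mu_t\otimes\diff t)$ bound on $v$ gives genuine joint integrability. Note also that reversing the last two limits does not help: if you let $M\to\infty$ before $R\to\infty$, the cutoff error $f\,\nabla\chi_R$ on the gradient side needs a control on $\int_a^b\int_{B_{2R}\setminus B_R}|f|^q\,\diff\mu_t\,\diff t$ that is likewise unavailable; this is exactly the trade-off you flag, but the function-side obstruction remains.

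The fix is short and essentially reproduces the paper's Step~2. After the $\epsilon\downarrow 0$ and $R\to\infty$ limits you have shown, for each fixed $M$, that $F_M(t):=\int f_M\,\diff\mu_t$ is bounded by $M$ and has distributional derivative $t\mapsto\int(\nabla f_M)^{\top}v_t\,\diff\mu_t\in L^1(a,b)$; hence $F_M\in\mathrm{AC}^1(a,b;\Rbb)$ and the fundamental theorem of calculus reads
\begin{equation*}
F_M(t)-F_M(s) = \int_s^t\int_{\Rbb^m}(\nabla f_M)^{\top}v_r\,\diff\mu_r\,\diff r \qquad\text{for all } a\le s\le t\le b.
\end{equation*}
Now take $M\to\infty$ at \emph{fixed} $s,t$: the left side converges by dominated convergence using $(i)$ (which now suffices, since $s$ and $t$ are fixed), and the right side converges by dominated convergence in $L^1(a,b)$ with dominator $|\nabla f|$ from $(ii)$. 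This yields the FTC identity for $F(t):=\int f\,\diff\mu_t$, hence $F\in\mathrm{AC}^1(a,b;\Rbb)$ together with the claimed formula for its derivative.
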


\begin{proof} We split the proof into two steps.

\textbf{Step 1:} In this first step we make stronger assumptions and suppose that $\varphi$ is globally Lipschitz and bounded. We start by recalling \cite[Theorem 8.2.1]{AmbrosioGigliSavare}, which implies that there exists a probability measure $\pmb{\eta}$ over $\Rbb^m \times  \Gamma_{(a,b)}$ (here, $\Gamma_{(a,b)}$ represents the space of continuous maps from $[a,b]$ into $\Rbb^m$) satisfying the following two properties:
\begin{enumerate}
\item[(i)]  $\pmb \eta$ is concentrated on the set of pairs $(x, \gamma) \in \Rbb^m \times \Gamma_{(a,b)}$ such that $\gamma \in \mathrm{AC}^p(a,b;\Rbb^m)$, and $\gamma$ is a solution to the ODE $\dot{\gamma}(t) = v_t(\gamma(t))$ for almost every $t \in (a,b)$, with initial condition $\gamma(a) =x$;
\item[(ii)] $\mu_t = (e_{t})_{\#} \pmb \eta$, for every $t \in [a,b]$, where the map $e_t : \Rbb^m \times \Gamma_{(a,b)} \rightarrow \Rbb^m$ is given by $e_t:(x , \gamma) \mapsto \gamma(t)$. 
\end{enumerate}
From property (i) of $\pmb \eta$ and \cite[Theorem 3.55]{Leoni} we can deduce that, for $\pmb{\eta}$-a.e.~$(x, \gamma)$, the function $t \in [a,b] \mapsto \varphi(\gamma(t))$ is absolutely continuous. However, we will need to use our additional assumptions on the measures $\mu_t$ to conclude that for $\pmb \eta$-a.e.~$(x, \gamma)$ we can apply the chain rule to rewrite the derivative of $\varphi(\gamma(t))$ and ultimately get to the representation \eqref{eqn:ContinuiyEqnLocallyLipschitz}.

Toward this aim, let us consider the set 
\[ A := \{ y \in \Rbb^m : \varphi \textrm{ is not differentiable at } y  \},\]
and define the function $g: \Gamma_{(a,b)} \rightarrow \Rbb$ given by
\[ g(\gamma):= \int_a^b \mathbf{1}_{A}(\gamma(t)) \diff t.\]
By Tonelli's theorem and property (ii) of $\pmb \eta$ we have
\begin{align*}
   \int_{\Rbb^m \times \Gamma_{(a,b)}} g(\gamma) \diff  \pmb {\eta}(x, \gamma) & = \int_a^b \int_{\Rbb^m \times \Gamma_{(a,b)}}  \mathbf{1}_{A}(\gamma(t)) \diff \pmb \eta(x, \gamma) \diff t 
   \\& = \int_a^b \mu_t(A) \diff t 
   \\& =0,
\end{align*}
where the last line follows from Rademacher's theorem and the fact that $\mu_t$ was assumed to be absolutely continuous with respect to the Lebesgue measure. The above implies that $g(\gamma) =0$ for $\pmb \eta$-a.e.~$(x, \gamma)$. In particular, for $\pmb \eta$-a.e.~$(x,\gamma)$ we have: $\gamma(t) \not \in A$
for a.e.~$t \in [a,b]$. Combining the previous fact with property (i) of $\pmb{\eta}$, the fact that absolutely continuous functions are a.e. differentiable, and the classical chain rule, we deduce that for $\pmb{\eta}$-a.e.~$(x, \gamma)$ we have
\[ \frac{\diff }{\diff t} \varphi(\gamma(t)) =  (\nabla \varphi(\gamma(t)))^{\top} \dot{\gamma}(t), \quad \text{a.e. } t \in [a,b]. \]
Using the above we can now conclude that, for all $s,t \in [a,b]$, 
\begin{align*}
\int_{\Rbb^m} \varphi(x) \diff \mu_t - \int_{\Rbb^m} \varphi(x) \diff \mu_s  & =  \int_{\Rbb^m \times \Gamma_{(a,b)}} (\varphi(\gamma(t)) - \varphi(\gamma(s))) \diff \pmb{\eta}(x, \gamma)
\\& =  \int_{\Rbb^m \times \Gamma_{(a,b)}} \int_s^t \frac{\diff}{\diff r} \varphi\circ \gamma(r) \diff r \diff \pmb{\eta}(x, \gamma)
\\& =\int_{\Rbb^m \times \Gamma_{(a,b)}} \int_s^t (\nabla \varphi(\gamma(r)))^{\top} \dot{\gamma}(r)  \diff r  \diff \pmb{\eta}(x, \gamma) 
\\& = \int_{\Rbb^m \times \Gamma_{(a,b)}} \int_s^t (\nabla \varphi(\gamma(r)))^{\top} v_r(\gamma(r))  \diff r  \diff \pmb{\eta}(x, \gamma) 
\\& = \int_s^t \left(\int_{\Rbb^m} (\nabla \varphi(x))^{\top}v_t(x) \diff \mu_r(x) \right) \diff r,
\end{align*}
where in the last line we used Fubini's theorem and property (ii) of $\pmb{\eta}$. This implies \eqref{eqn:ContinuiyEqnLocallyLipschitz}.


\textbf{Step 2:} We now consider a general locally Lipschitz $\varphi$ as in the statement of this proposition and approximate $\varphi$ by bounded and globally Lipschitz functions $\{ \varphi_n \}_{n \in \mathbb{N}}$, for which we can apply the result in Step 1. Indeed, for every $n \in \mathbb{N}$ we consider a smooth cutoff function $\xi_n: \Rbb^m \rightarrow [0,1]$ satisfying $\xi_n \equiv 1$ in $B_n(0)$ (the ball of radius $n$ centered at $0 \in \Rbb^m$), $\xi_n(x) \equiv 0$ outside of $B_{n+1}(0)$, and $\lVert \nabla \xi_n \rVert_\infty \leq C$ for some constant independent of $n$. We define $\varphi_n(x) := \xi_n(x)\varphi(x)$, $x \in \Rbb^m$. Note that $\varphi_n$ is globally Lipschitz and bounded for every $n \in \mathbb{N} $.

Let us now consider arbitrary $s, t \in [a,b]$. Then, for every $n \in \mathbb{N}$, we have
\[ \int_{\Rbb^m} \varphi_n(x) \diff \mu_t(x) -  \int_{\Rbb^m} \varphi_n(x) \diff \mu_t(x) = \int_s^t \int_{\Rbb^m } (\nabla \varphi_n(x))^{\top} v_r(x) \diff \mu_r(x) \diff r,  \]
thanks to Step 1. Using the assumptions $(i)$ and $(ii)$ on $\varphi$ and the dominated convergence theorem we can take the limit as $n \rightarrow \infty$ on both sides of the above expression and conclude that
\[ \int_{\Rbb^m} \varphi(x) \diff \mu_t(x) -  \int_{\Rbb^m} \varphi(x) \diff \mu_t(x) = \int_s^t \int_{\Rbb^m } (\nabla \varphi(x))^{\top} v_r(x) \diff \mu_r(x) \diff r.  \]
This establishes the desired result.    
\end{proof}

\begin{remark}
We emphasize that a key assumption in the proof of Proposition \ref{prop:ContinuityEqnLocallyLipschitz} is that the measures $\mu_t$ are absolutely continuous with respect to the Lebesgue measure. Indeed, thanks to this assumption we were able to guarantee that for almost all trajectories $\gamma$ in the probabilistic representation of the continuity equation we could apply the chain rule $\frac{\diff}{\diff t} \varphi(\gamma(t)) = (\nabla \varphi(\gamma(t)))^{\top} \dot{\gamma}(t)$ for almost every $t \in [a,b]$, even if $\varphi$ was only assumed to be Lipschitz. Note that, in general, the chain rule for the composition of a Lipschitz function and an absolutely continuous path $\gamma$ does not always hold. As an example of this, consider the function $\varphi: \Rbb^2 \rightarrow \Rbb$ given by $\varphi(x_1, x_2)= \max \{ x_1, x_2 \}$ and let $\gamma(t)=(t,t)$. Since the range of the map $\gamma$ is contained in the set where $\varphi$ is not differentiable, we cannot make sense of the expression $(\nabla\varphi (\gamma(t)))^{\top} \dot{\gamma}(t)$, while, of course, the function $\varphi\circ \gamma(t)$ is differentiable and thus we can make sense of $\frac{\diff}{\diff t} \varphi(\gamma(t))$. 
\label{rem:ACMeasures}
\end{remark} 

\section{On Kantorovich Duality}\label{app:KantDuality}

In Section~\ref{sec:asymp} we relied on some results concerning the Kantorovich dual of the semi-discrete optimal transport problem, and in this appendix we include precise statements and references for the requisite results.

To begin, recall that the Kantorovich duality theorem (see \cite[Theorem 1.3]{VillaniOT}) for the quadratic cost $c(x,y) := \|x-y\|^2$ implies that, for any two probability measures $P, Q \in \Pcal_2(\Rbb^d)$ with finite second moments, 
\begin{equation}
  W_2^2(P, Q) = \max_{\eta \in L^1(Q) }\int_{\Rbb^d } \eta^c(x) \diff P(x) + \int_{\Rbb^d} \eta(y) \diff Q (y).  
  \label{eqn:KDuality}
\end{equation}
In the above, 
\begin{equation}
    \eta^c(x ):= \inf_{y \in \Rbb^d} ( \|x- y\|^2 - \eta(y) )
    \label{def:cTransform}
\end{equation}
is the $c$-transform of the measurable function $\eta$. That the maximum is attained in \eqref{eqn:KDuality} is a consequence of \cite[Theorem 2.9]{VillaniOT} and the fact that $P$ and $Q$ have finite second moments. We will refer to $(\eta^c,\eta)$ as a pair of \textit{optimal Kantorovich potentials} for $P$ and $Q$.

When $P$ is a probability measure with a density with respect to the Lebesgue measure, a useful identity for a pair of optimal dual potentials is:
\begin{equation}
\eta^c(x) + \eta(\transport_{P \mapsto Q}(x)) = \| \transport_{P \to Q}(x) - x \|^2, \quad 
\label{eqn:AuxDuality1}
\end{equation}
where $\transport_{P \to Q}$ is the unique (that is, unique $P$-a.e.) optimal transport map between $P$ and $Q$ discussed in \eqref{eqn:Monge-OT}. Indeed, \eqref{eqn:AuxDuality1} follows from \eqref{eqn:KDuality}, which in this case gives 
\begin{eqnarray*}  
\int_{\Rbb^d} \eta^c(x) \diff P(x) +  \int_{\Rbb^d} \eta(\transport_{P \to Q}(x)) \diff P(x) & = & \int_{\Rbb^d} \eta^c(x) \diff P(x) +  \int_{\Rbb^d} \eta(y) \diff Q(y) \\
& = & \int_{\Rbb^d} \| \transport_{P \rightarrow Q}(x) - x \|^2 \diff P(x)  ,
\end{eqnarray*}
and from the fact that, by definition of $\eta^c$, the inequality $\eta^c(x) + \eta(y) \leq \|x-y\|^2$ holds for all $x,y \in \Rbb^d$.

\medskip 

While the existence of optimal Kantorovich potentials holds under very general assumptions, additional geometric conditions are required to guarantee uniqueness of optimal potentials up to additive constants. We discuss this next.

\begin{lemma}
\label{lem:AuxDuality}
Let $P \in \mathcal{P}_{2, \mathrm{ac}}(\Rbb^d)$, and suppose that the support of $P$ is the closure of an open and connected set $U$. Then, up to additive constants, there is a unique solution to \eqref{eqn:KDuality}. Precisely, for any two solutions $\eta, \tilde\eta$ of \eqref{eqn:KDuality}, there exists a constant $a \in \Rbb^d$ such that $\eta(y) = \tilde{\eta}(y) + a$ for $Q$-a.e. $y \in \Rbb^d$ and $\eta^c(x) = \tilde{\eta}^c(x) -a$ for $P$-a.e. $x \in \Rbb^d$.
\end{lemma}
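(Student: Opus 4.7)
The plan is to reformulate the problem in terms of Brenier potentials, apply the uniqueness of the optimal transport map, and then exploit the connectedness of $U$ to rigidify the corresponding convex potentials. Throughout I would write $c(x,y)=\|x-y\|^2$.

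First, I would replace both optimal pairs by their canonical $c$-concave representatives $(\eta^c,(\eta^c)^c)$ and $(\tilde\eta^c,(\tilde\eta^c)^c)$ without loss of generality and without affecting the value in \eqref{eqn:KDuality}; this is standard since $(\eta^c)^c\ge\eta$ pointwise with equality $Q$-a.e.\ for any optimal $\eta$ (see, e.g., \cite[Chapter~1.6]{Santambrogio}). For the quadratic cost, $\eta^c$ being $c$-concave is equivalent to the function
\[
\phi(x):=\tfrac12\|x\|^2-\tfrac12\eta^c(x)
\]
being convex on $\Rbb^d$, and similarly $\tilde\phi$ is convex. Next, by Brenier's theorem (\cite[Theorem~2.12(ii)]{VillaniOT}), since $P\in\mathcal{P}_{2,\mathrm{ac}}(\Rbb^d)$ the optimal transport map $\transport_{P\to Q}$ is uniquely determined $P$-a.e.\ and equals the gradient of any such convex Brenier potential; hence $\nabla\phi(x)=\nabla\tilde\phi(x)=\transport_{P\to Q}(x)$ for $P$-a.e.\ $x$, where both gradients exist (Lebesgue-a.e., by convexity).

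The main step is to upgrade this $P$-a.e.\ coincidence of gradients into the pointwise identity $\phi\equiv\tilde\phi+c_0$ on $\overline{U}$ for some $c_0\in\Rbb$, and this is where I expect the main technical effort to lie. The function $\psi:=\phi-\tilde\phi$ is a difference of convex functions, hence locally Lipschitz on $\Rbb^d$. Because $\supp P=\overline{U}$, every nonempty open subset of $\overline{U}$ has positive $P$-measure, so the set $E:=\{x:\nabla\phi(x)=\nabla\tilde\phi(x)\}$ is dense in $\overline{U}$. At any point $x\in U$ which is a common Lebesgue differentiability point of $\phi$ and $\tilde\phi$, pick $x_n\to x$ with $x_n\in E$ differentiability points; then $\nabla\phi(x_n)=\nabla\tilde\phi(x_n)$, and the upper semi-continuity of the convex subdifferentials forces the common limit to lie in $\partial\phi(x)\cap\partial\tilde\phi(x)=\{\nabla\phi(x)\}\cap\{\nabla\tilde\phi(x)\}$, yielding $\nabla\phi(x)=\nabla\tilde\phi(x)$. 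Since common differentiability points form a full-Lebesgue-measure subset of $U$, one concludes $\nabla\psi=0$ Lebesgue-a.e.\ on $U$; because $U$ is open and connected and $\psi$ is locally Lipschitz, it follows that $\psi\equiv c_0$ on $U$, and by continuity on $\overline{U}$.

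Finally I would unwind the reductions: from $\phi\equiv\tilde\phi+c_0$ on $\overline{U}$ one obtains $\eta^c\equiv\tilde\eta^c-2c_0=:\tilde\eta^c-a$ on $\overline{U}$, and hence $P$-a.e., which is the first conclusion. For the $Q$-side, the identification $\eta=(\eta^c)^c$ and $\tilde\eta=(\tilde\eta^c)^c$ $Q$-a.e.\ (from the first-step reduction) combined with the order-reversing identity $(\zeta+\text{const})^c=\zeta^c-\text{const}$ applied to $\tilde\eta^c=\eta^c+a$ yields $\tilde\eta(y)=\eta(y)-a$ for $Q$-a.e.\ $y$, which is exactly the second conclusion. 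The key obstacle is the density/subdifferential argument in the middle paragraph; an alternative route, perhaps more robust if the subdifferential argument is deemed fragile, would be to invoke $c$-cyclical monotonicity of the joint contact set $\{(x,y):\eta^c(x)+\eta(y)=\|x-y\|^2\}$ and deduce directly that the offset along pairs in this set is constant, using connectedness of the $x$-projection.
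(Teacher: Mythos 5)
Your proof is correct, and its skeleton matches the paper's: both arguments pin down $\nabla\eta^c$ (equivalently, the gradient of the Brenier potential $\phi=\tfrac12\|\cdot\|^2-\tfrac12\eta^c$) via the optimal transport map, then conclude constancy of the difference on the open connected set $U$ using that the difference is locally Lipschitz, and finally unwind via $\eta=(\eta^c)^c$ $Q$-a.e.\ and the identity $(\zeta+\mathrm{const})^c=\zeta^c-\mathrm{const}$. Your repackaging through Brenier's theorem and convex conjugates is a cosmetic change. The one genuine refinement is your subdifferential--density step: the paper passes directly from ``$\nabla(\eta^c-\tilde\eta^c)=0$ $P$-a.e.'' to ``the difference is constant on $U$'', which implicitly requires the gradient to vanish \emph{Lebesgue}-a.e.\ on $U$. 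Since the lemma only assumes $P$ is absolutely continuous with $\supp P=\overline U$ (not that the density is positive a.e.\ on $U$), $P$-a.e.\ does not automatically upgrade to Lebesgue-a.e.; your argument — using that the full-$P$-measure agreement set is dense in $\overline U$, together with local boundedness of subgradients and closedness of the subdifferential graph — closes that gap cleanly at differentiability points, from which Lebesgue-a.e.\ (and then constancy) follows. So your route is the same as the paper's but noticeably more careful at that one step; in the paper's actual use case (Assumption~\ref{assum:AdditionalRegularity}(ii) gives $p_\theta\ge c_0>0$) the gap is moot, but for the lemma as stated your version is tighter.
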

\begin{proof}
Suppose that $\eta, \tilde{\eta}$ are two optimizers of \eqref{eqn:KDuality}. We start by observing that $\eta^{ccc} = \eta^c$ and that
\begin{equation}
   \eta^{cc}(y) = \eta(y), \quad Q\mbox{-a.e.}~y \in \Rbb^d. 
   \label{eqn:AuxDuality2}
\end{equation}
The first identity is straightforward to prove and is standard in the literature of optimal transport. For the second identity, note that the inequality $\eta^{cc}(y) \geq \eta(y)$ for all $y \in \Rbb^d$ follows directly from the definition of the $c$ transform. To get the reverse inequality, it suffices to note that, by optimality of $\eta$, we must have
\[   \int_{\Rbb^d} \eta^c(x) \diff P(x) +\int_{\Rbb^d} \eta(y) \diff Q(y)    \geq  \int_{\Rbb^d} \eta^c(x) \diff P(x) +\int_{\Rbb^d} \eta^{cc}(y) \diff Q(y) ,  \]
 which, when combined with $\eta^{cc} \geq \eta$, implies \eqref{eqn:AuxDuality2}.

Next, using \eqref{eqn:AuxDuality1}, \eqref{eqn:AuxDuality2}, and the definition of $\eta^{cc}$, we see that 
\begin{equation}
   \nabla \eta^c(x) = 2 (x- \transport_{P \to Q}(x) ), \quad P\mbox{-a.e.}\; x \in \Rbb^d. 
   \label{aux:GradientDualPotential}
\end{equation}
Using the analogous formula for the gradient of $\tilde \eta$, we obtain
\[ \nabla \eta^c(x) = \nabla \tilde{\eta}^c(x), \quad P\mbox{-a.e.}\; x \in \Rbb^d.   \]
In particular, the function $\eta^c - \tilde{\eta}^c$ has gradient equal to zero for $P$ almost every $x \in \Rbb^d$. Since the function $\eta-\tilde{\eta}$ is locally Lipschitz and $U$ is open and connected, this implies that there is a constant $a \in \Rbb$ such that
\[ \eta^c(x) = \tilde{\eta}^c(x) -a \]
for every $x$ in $P$'s support. Using this equality and \eqref{eqn:AuxDuality1} for both $\eta$ and $\tilde \eta$, we deduce that $\eta(y) = \tilde{\eta}(y) + a$ for $Q$-a.e. $y \in \Rbb^d$, completing in this way the proof. 
\end{proof}

\begin{remark}
The pathwise connectivity of the support of $P$ is an important assumption for the conclusion of Lemma \ref{lem:AuxDuality} to hold. To see why, imagine a probability measure $P \in \mathcal{P}_{2,\mathrm{ac}}(\Rbb)$ whose support is the set $[-2,-1] \cup [1,2]$ and satisfies $P([-2,-1]) = P([1,2]) =1/2$. Consider now an empirical measure $Q= \frac{1}{4}\sum_{i=1}^4 \delta_{x_i}$ with $x_1, x_2 \in [-2,-1]$ and $x_3, x_4 \in [1,2]$. Since the distance between the sets $[-2,-1]$ and $[1,2]$ is larger than the diameter of each of the sets, the optimal transport problem between $P$ and $Q$ can be completely decoupled into optimal transport problems within each of the connected components of $P$'s support. Due to this, optimal Kantorovich potentials are defined up to additive constants on \textit{each} of the connected components of the support of $P$, and not just globally. More general conditions for uniqueness of optimal Kantorovich potentials than the ones in Lemma \ref{lem:AuxDuality} are discussed in detail in \cite{UniquenessKantorovichPotentials}.
\end{remark}

Specializing the above discussion to the case in which $Q$ is an empirical measure of the form $Q= \frac{1}{n}\sum_{i=1}^n \delta_{x_i}$, we see that the squared distance $W_2^2(P, Q)$ can be rewritten as 
\begin{equation}
W_2^2(P, Q) = \max_{(b_1, \dots, b_n) \in \Rbb^n}  \frac{1}{n}\sum_{i=1}^n b_i  + \int_{\Rbb^d} \min_{j=1, \dots, n} \{ \|x- x_j\|^2 - b_j \} \diff P(x).
\label{eqn:AuxDualitySemi}
\end{equation}
Further, if $P$ satisfies the assumptions of Lemma \ref{lem:AuxDuality}, we can deduce from that lemma the existence of a unique maximizer of \eqref{eqn:AuxDualitySemi} within the set of vectors $b \in \Rbb^n$ satisfying $\sum_{i=1}^n b_i =0$. For the measure $P = P_\theta$ as in Assumption \ref{assum:AdditionalRegularity}, this unique solution is denoted by $b(x_1, \dots, x_n, \theta)$ and in what follows we argue that $b(x_1, \dots, x_n, \theta)$ is a continuously differentiable function of the points $x_1, \dots, x_n$ and the parameter $\theta$.

Toward this aim, observe that, after setting $b_n= - \sum_{i=1}^{n-1}b_i$, the solution $b(x_1, \dots, x_n, \theta)$ can be characterized implicitly via the equation
\begin{equation}
   0 = \mathcal{G}(b_1,\dots, b_{n-1}{}, x_1, \dots, x_n, \theta) :=  \left(\begin{matrix}  1- n \int_{V_1} p_\theta(x) \diff x   \\ \vdots \\ 1- n \int_{V_{n-1}} p_\theta(x) \diff x  \end{matrix} \right) \in \Rbb^{n-1}, 
   \label{eqn:DefG}
\end{equation}
where $V_1, \dots,V_{n-1}$ are the sets (Laguerre cells):
\[  V_i= V_i( b_1, \dots, b_{n-1}, x_1, \dots, x_n) := \{ x \in \Rbb^d \: \text{s.t.} \: \|x-x_i\|^2 - b_i \geq \|x-x_j\|^2 - b_j, \quad \forall j=1, \dots, n  \};   \]
see \cite[Chapter 6.4.2]{Santambrogio}. From this characterization, we can deduce the following result.

\begin{proposition}
\label{prop:ContDiffer}
 Suppose that the model $\mathcal{P}$ satisfies Assumption \ref{assum:AdditionalRegularity}. Then  
 \[ b(x_1, \dots, x_n,\theta) := \argmax_{b \in \Rbb^n \: \mathrm{ s.t. \:  } \sum_{i=1}^n b_i =0 } \left\{  \frac{1}{n}\sum_{i=1}^n b_i  + \int_{\Rbb^d} \min_{j=1, \dots, n} \{ \|x- x_j\|^2 - b_j \} \diff P_\theta(x) \right\} \]
 is continuously differentiable in all its arguments.
\end{proposition}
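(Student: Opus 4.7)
The natural approach is to apply the Implicit Function Theorem to the system $\mathcal{G}(b_1,\ldots,b_{n-1},x_1,\ldots,x_n,\theta) = 0$ defined in \eqref{eqn:DefG}, evaluated at the unique solution point singled out by the constraint $b_n = -\sum_{i<n} b_i$. Two things have to be checked there: first, that $\mathcal{G}$ is jointly $C^1$ in all of its arguments; second, that the $(n-1)\times(n-1)$ Jacobian $\partial_b \mathcal{G}$ is invertible. Given these two facts, the Implicit Function Theorem produces a $C^1$ local inverse, and the uniqueness statement from Lemma~\ref{lem:AuxDuality} guarantees that these local solutions agree with the globally defined $b(x_1,\ldots,x_n,\theta)$, yielding the desired regularity.

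For the $C^1$ regularity of $\mathcal{G}$, the $\theta$-derivative of each component $\mathcal{G}_i = 1 - n\int_{V_i} p_\theta(x)\,dx$ is handled by differentiation under the integral sign, using Assumption~\ref{assum:AdditionalRegularity}(iii) for smoothness of $p_\theta$ in $(x,\theta)$ and Assumption~\ref{assum:AdditionalRegularity}(ii) for the uniform bound that legitimizes the exchange. The $b_j$- and $x_k$-derivatives are more delicate because the domain $V_i$ itself depends on these variables. Using the fact that the boundary of each $V_i$ consists of the facets $\Gamma_{ij}$ lying in the explicit hyperplanes described in \eqref{eqn:AuxWPEHyperplane}, a standard domain-perturbation (Reynolds/co-area) argument produces
\begin{equation*}
\frac{\partial}{\partial b_j}\int_{V_i} p_\theta(x)\,dx = -\frac{1}{2\|x_i-x_j\|}\int_{\Gamma_{ij}} p_\theta(x)\,d\mathcal{H}^{d-1}(x), \qquad i\neq j,
\end{equation*}
together with an analogous surface-integral formula for $\partial/\partial x_k$ with an affine weight. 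Joint continuity of these expressions in $(b,x_1,\ldots,x_n,\theta)$ follows from continuity of $p_\theta$ and from the continuous dependence of each facet (viewed as the intersection of finitely many moving affine half-spaces) on the parameters. The key local fact is that at the solution point $\int_{V_i} p_\theta\,dx = 1/n > 0$ for every $i$, so each cell has positive Lebesgue volume and no facet degenerates; hence the combinatorial structure of the Laguerre diagram is locally stable and the formulas apply smoothly in a neighborhood.

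For the invertibility, note that $\mathcal{G}$ is, up to sign and the scalar factor $n$, the gradient of the concave dual functional
\begin{equation*}
\Phi(b) := \frac{1}{n}\sum_{i=1}^n b_i + \int_{\Rbb^d}\min_{1\le j\le n}(\|x-x_j\|^2 - b_j)\,dP_\theta(x)
\end{equation*}
restricted via $b_n = -\sum_{i<n} b_i$ to the hyperplane $\{\sum_i b_i = 0\}$. The Hessian of $\Phi$ in the full $n$ coordinates is the symmetric matrix $H$ with $H_{ij} = \frac{1}{2\|x_i-x_j\|}\int_{\Gamma_{ij}} p_\theta\,d\mathcal{H}^{d-1} \ge 0$ for $i\neq j$ and $H_{ii} = -\sum_{j\neq i} H_{ij}$; this is minus a weighted graph Laplacian, so its kernel is exactly the span of $(1,\ldots,1)$ whenever the weighted neighbor graph is connected. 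Since $p_\theta \ge c_0 > 0$ on the connected open set $U$ (Assumption~\ref{assum:AdditionalRegularity}(i)--(ii)), we have $H_{ij} > 0$ iff $\mathcal{H}^{d-1}(\Gamma_{ij}\cap U) > 0$, i.e.\ iff $V_i$ and $V_j$ are genuinely geometrically adjacent inside the support. A disconnection of this graph would split $\overline{U}$ as a disjoint union of two non-empty closed cell-groups meeting only in an $\mathcal{H}^{d-1}$-null set, which, combined with the fact that each cell has positive Lebesgue volume at the solution, contradicts connectedness of $U$. Thus $H$ is strictly negative definite on $\{\sum_i b_i = 0\}$, which is exactly the invertibility of the restricted Jacobian $\partial_b \mathcal{G}$. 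I expect the main obstacle to be the careful bookkeeping needed to justify the domain-perturbation formulas and their joint continuity, but these can be carried out in a neighborhood of the solution where the cell incidences are preserved.
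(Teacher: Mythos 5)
Your proof is correct and takes essentially the same approach as the paper: apply the Implicit Function Theorem to $\mathcal{G}=0$, with invertibility of $\partial_b\mathcal{G}$ following from the strict negative-definiteness of the semi-discrete dual Hessian on the hyperplane $\{\sum_i b_i=0\}$. The only difference is that the paper delegates both the Hessian structure (minus a weighted graph Laplacian with kernel $\mathrm{span}\{(1,\ldots,1)\}$ under connectedness of the neighbor graph) and the $C^1$-regularity of $\mathcal{G}$ to a citation of \cite[Lemma~6.5]{Santambrogio}, whereas you re-derive these facts from scratch via the domain-perturbation formulas for the Laguerre-cell boundaries and the connectedness-of-$U$ argument; your surface-integral expression for $\partial_{b_j}\int_{V_i}p_\theta$ and the resulting Laplacian structure match what that lemma provides.
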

\begin{proof}
Using \cite[Lemma 6.5]{Santambrogio}
and the fact that the support of $P_\theta$ is pathwise connected, the matrix $$ \left[\frac{\partial \mathcal{G}_i}{\partial b_j}\right]_{i,j=1, \dots, n-1}$$ can be verified to be invertible; here, $\mathcal{G}$ is the map defined in \eqref{eqn:DefG}. The implicit function theorem implies that $b(x_1, \dots, x_n, \theta)$ is continuously differentiable. 
\end{proof}

\section{Proofs of Results}\label{app:proofs}

This section contains the proofs of the main results from the body of the paper, as well as some auxiliary results that are needed along the way.

\subsection{Proofs from Section~\ref{subsec:sensitivity}}\label{app:proofs-sensitivity}

First we have the following regarding the convergence of $\varepsilon$-sensitivity to sensitivity.

\begin{proof}[Proof of Proposition~\ref{prop:eps-sen-cvg}]
    By assumption, $\nabla^2T_n$ is continuous and compactly supported; thus there exists a constant $C>0$ such that Taylor's theorem yields
    \begin{equation*}
        \bigg|T_n( X_1', \dots, X_n') - T_n(X_1, \dots, X_n) - (\nabla T_n(X_1, \dots, X_n))^{\top}(\xi_1,\ldots, \xi_n)\bigg| \le C\sum_{i=1}^{n} \|\xi_i\|^2
    \end{equation*}
    almost surely.
    By squaring this and taking the expectation, we have
    {\small \begin{equation}\label{eqn:sen-eps-n}
        \Ebb_{P}\left[\bigg|T_n( X_1', \dots, X_n') - T_n(X_1, \dots, X_n) - (\nabla T_n(X_1, \dots, X_n))^{\top}(\xi_1,\ldots, \xi_n)\bigg|^2\right] \le Cdn (n+2)\varepsilon^4
    \end{equation}}
    for all $\varepsilon>0$, so rescaling and taking the limit as $\varepsilon\to 0$ reveals that the random variables
    \begin{equation*}
        \frac{T_n( X_1', \dots, X_n') - T_n(X_1, \dots, X_n)}{\varepsilon} \qquad \textnormal{ and }\qquad\frac{(\nabla T_n(X_1, \dots, X_n))^{\top}(\xi_1,\ldots, \xi_n)}{\varepsilon}
    \end{equation*}
    possess the same limit in $L^2_{P}(\Omega;\Rbb)$.
    Thus, it suffices to show that the $L^2_{P}(\Omega;\Rbb)$ norm of the latter converges to $\Sen_{P}(T_n)$, which can be done as follows:
    \begin{align*}
        & \Ebb_{P}\left[\left|\frac{(\nabla T_n(X_1, \dots, X_n))^{\top}(\xi_1,\ldots, \xi_n)}{\varepsilon}\right|^2\right] \\
        & = \Ebb_{P}\left[\Ebb\left[\left|\frac{(\nabla T_n(X_1, \dots, X_n))^{\top}(\xi_1,\ldots, \xi_n)}{\varepsilon}\right|^2\bigg|X_1,\ldots, X_n\right]\right] \\
        &= \Ebb_{P}\left[\|\nabla T_n(X_1,\ldots, X_n)\|^2\right],
    \end{align*}
    where we used the tower property and the fact $\Ebb[v^{\top}(\xi_1,\ldots,\xi_n)] = \varepsilon^2 \|v\|^2$ for all $v\in\Rbb^n$.
    \end{proof}

    \subsection{Proofs from Section~\ref{sec:bound}}\label{app:proofs-bound}

    Next we turn to the proof of the Wasserstein-Cram\'er-Rao lower bound.

    \begin{proof}[Proof of Theorem~\ref{thm:WCR}]
        Fix $\theta\in \Theta$, and let us show \eqref{eqn:WCR} holds at $\theta$.
        To do this, note that $\Theta$ being open implies that there exists some $r>0$ such that $B_r(0)\subseteq\{h\in\Rbb^p: \theta+h\in\Theta\}$, where $B_r(0)$ is the  open ball of radius $r$ centered at $0$.
        Then set
        \begin{equation*}
            B_{\theta}:=\left\{h\in B_r(0): \int_{0}^{1}\Ebb_{\theta+th}\left[\|DT_n(X_1,\ldots, X_n)\|^2\right]\diff t < \infty\right\}
        \end{equation*}
        and       
        \begin{equation*}
            C_{\theta}:=\left\{\lambda\in\Rbb^k: \lambda^{\top}\Cos_{\theta}(T_n)\lambda\ge \frac{1}{n}\lambda^{\top}(D\chi(\theta))^{\top}(J(\theta))^{-1}D\chi(\theta)\lambda\right\},
        \end{equation*}
        as well as the linear map $L_{\theta}:\Rbb^k\to\Rbb^p$ via $L_{\theta}\lambda:= (J(\theta))^{-1}D\chi(\theta)\lambda$.
        
        We claim that $L_{\theta}^{-1}(B_{\theta})\subseteq C_{\theta}$.
        To see this, suppose $\lambda\in L_{\theta}^{-1}(B_{\theta})$ so that $h:=(J(\theta))^{-1}D\chi(\theta)\lambda\in B_{\theta}$.
        Then consider the path $\{P_{\theta+th}\}_{0\le t \le 1}$ in $\Pcal$, and use Lemma~\ref{lem:DWS-potential} to see that its potential $\phi:\Rbb^d\to\Rbb$ satisfies $\Phi_{\theta}h =\nabla \phi$.
    Also, the unbiasedness assumption on $T_n$ implies
        \begin{equation*}
            \Ebb_{\theta+th}[\lambda^{\top}T_n(X_1,\ldots, X_n)] = \lambda^{\top}\chi(\theta+th)
        \end{equation*}
        for all $0 \le t\le 1$; hence, the differentiability of $\chi$ implies
        \begin{equation*}
            \frac{\diff}{\diff t}\Ebb_{\theta+th}[\lambda^{\top}T_n(X_1,\ldots, X_n)] = \lambda^{\top}(D\chi(\theta+th))^{\top}h
        \end{equation*}
        for all $0 \le t\le 1$.
        Now $h\in B_{\theta}$ implies that we can apply Theorem~\ref{thm:W-Chap-Robb-univar}, yielding:
        \begin{equation}\label{eqn:WCR-1}
            \Sen_{\theta+th}(\lambda^{\top}T_n) \ge \frac{1}{n}\cdot\frac{(\lambda^{\top}(D\chi(\theta+th))^{\top}h)^2}{\Ebb_{\theta+th}\left[\|\Phi_{\theta+th}(x)h\|^2\right]}
        \end{equation}
        for Lebesgue a.e.~$0\le t\le 1$.
        Next, straightforward calculations yield
        \begin{align*}
            \Sen_{\theta+th}(\lambda^{\top}T_n) &= \Ebb_{\theta+th}\left[\|\nabla \lambda^{\top}T_n(X_1,\ldots, X_n)\|^2\right] \\
            &= \Ebb_{\theta+th}\left[\|\lambda^{\top}(DT_n(X_1,\ldots, X_n))^{\top}\|^2\right] \\
            &= \Ebb_{\theta+th}\left[\lambda^{\top}(DT_n(X_1,\ldots, X_n))^{\top}DT_n(X_1,\ldots, X_n)\lambda\right] \\
            &= \lambda^{\top}\Cos_{\theta+th}(T_n)\lambda
        \end{align*}
        and
        \begin{align*}
            \Ebb_{\theta+th}\left[\|\Phi_{\theta+th}(X)h\|^2\right]&= \Ebb_{\theta+th}\left[h^{\top}(\Phi_{\theta+th}(X))^{\top}\Phi_{\theta+th}(X) h\right] \\
            &= h^{\top}J(\theta+th)h,
        \end{align*}
        so the inequality \eqref{eqn:WCR-1} reads
        \begin{equation}\label{eqn:WCR-a.e.-2}
            \begin{split}
                \lambda^{\top}\Cos_{\theta+th}(T_n)\lambda &\ge \frac{1}{n}\cdot\frac{(\lambda^{\top}(D\chi(\theta+th))^{\top}h)^2}{h^{\top}J(\theta+th)h} \\
            &= \frac{(\lambda^{\top}(D\chi(\theta+th))^{\top}(J(\theta))^{-1}D\chi(\theta)\lambda)^2}{n\lambda^{\top}(D\chi(\theta))^{\top}(J(\theta))^{-1}J(\theta+th)(J(\theta))^{-1}D\chi(\theta)\lambda}
            \end{split}
        \end{equation}
        for Lebesgue a.e.~$0\le t\le 1$.
        Since a set of full Lebesgue measure is dense, we may take $t\to 0$ along a suitable subsequence in \eqref{eqn:WCR-a.e.-2}, and then apply the continuity from $(i)$ and $(ii)$ to get that the right side converges to
        \begin{equation}\label{eqn:WCR-a.e.-3}
            \frac{(\lambda^{\top}(D\chi(\theta))^{\top}(J(\theta))^{-1}D\chi(\theta)\lambda)^2}{n\lambda^{\top}(D\chi(\theta))^{\top}(J(\theta))^{-1}J(\theta)(J(\theta))^{-1}D\chi(\theta)\lambda} = \frac{1}{n}\lambda^{\top}(D\chi(\theta))^{\top}(J(\theta))^{-1}D\chi(\theta)\lambda.
        \end{equation}
        Similarly, we use the continuity from $(iii)$ to get that the left side of \eqref{eqn:WCR-a.e.-2} converges to $\lambda^{\top}\Cos_{\theta}(T_n)\lambda$.
        Thus,~\eqref{eqn:WCR-a.e.-2} and \eqref{eqn:WCR-a.e.-3} show $\lambda\in C_{\theta}$, hence $L_{\theta}^{-1}(B_{\theta})\subseteq C_{\theta}$.

        To finish the proof, note that condition $(iv)$, after taking the sum over $i=1,\ldots, n$, shows that $B_r(0)\setminus B_{\theta}$ has zero Lebesgue measure in $\Rbb^p$.
        Since $L_{\theta}:\Rbb^k\to\Rbb^p$ is injective by conditions $(i)$ and $(ii)$, it follows that $L_{\theta}^{-1}(B_r(0)\setminus B_{\theta})$ has zero Lebesgue measure in $\Rbb^k$.
        Combining this with $L_{\theta}^{-1}(B_{\theta})\subseteq C_{\theta}$ from the previous paragraph shows that $\Rbb^k\setminus C_{\theta}$ has zero Lebesgue measure.
        Finally, this implies that $C_{\theta}$ is dense in $\Rbb^k$, so the continuity of quadratic forms implies $C_{\theta} = \Rbb^k$.
        In other words, we have shown that \eqref{eqn:WCR} holds at $\theta$, as needed.  
    \end{proof}

    \subsection{Proofs from Section~\ref{sec:exact}}\label{app:proofs-exact}

    Next we consider proofs of the results concerning exact sensitivity-efficiency in transport families.
    Along the way we will need the following result containing some basic calculations:
    
    \begin{lemma}\label{lem:W-fam-properties}
        Suppose that $\Pcal = \{P_{\theta}: \theta\in\Theta\}\subseteq\Pcal_{2,\textnormal{ac}}(\Rbb^d)$ for $\Theta\subseteq\Rbb^p$ is a transport family with potential $\phi: \Rbb^d \to \Rbb^k$ and parameterization $\chi: \Theta \to \Rbb^k$, and set $\Lambda(\theta):=\Ebb_{\theta}[(D\phi(X))^{\top}D\phi(X)]$.
        Then:
        \begin{itemize}
            \item[(i)] $J(\theta) = D\chi(\theta)(\Lambda(\theta))^{-1}(D\chi(\theta))^{\top}$ for all $\theta \in \Theta$.
            \item[(ii)] If $D\chi(\theta)\in\Rbb^{p\times k}$ is injective and $J(\theta)\in\Rbb^{p\times p}$ is invertible, then $\Lambda(\theta) = (D\chi(\theta))^{\top}(J(\theta))^{-1}D\chi(\theta)$.
            \item[(iii)] If $\Theta$ is connected, $\Ebb_{\theta}[\|\phi(X)\|]<\infty$ for all $\theta\in\Theta$, and $\theta\mapsto \Ebb_{\theta}[\|D\phi(X)\|^2]$ is locally Lebesgue integrable, then there exists some $v\in\Rbb^k$ with $$\Ebb_{\theta}[\phi(X)] = \chi(\theta)+v$$ for Lebesgue a.e. $\theta \in \Theta$.
            If, in addition, $\theta \mapsto \Ebb_\theta[\phi(X)]$ is continuous, the equality holds for all $\theta \in \Theta$. 
        \end{itemize}
    \end{lemma}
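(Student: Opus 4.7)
Parts (i) and (ii) will be handled by direct matrix algebra. For (i), I will substitute the transport family identity $\Phi_{\theta}(x) = D\phi(x)(\Lambda(\theta))^{-1}(D\chi(\theta))^{\top}$ into the definition $J(\theta) = \Ebb_{\theta}[(\Phi_{\theta}(X))^{\top}\Phi_{\theta}(X)]$; using the symmetry of $\Lambda(\theta)^{-1}$ and recognizing $\Ebb_{\theta}[(D\phi(X))^{\top}D\phi(X)] = \Lambda(\theta)$ as the middle factor, two of the three $\Lambda(\theta)^{-1}$'s collapse against this $\Lambda(\theta)$, leaving exactly $D\chi(\theta)(\Lambda(\theta))^{-1}(D\chi(\theta))^{\top}$. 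For (ii), I will first observe that the hypotheses force $k = p$: since $J(\theta) = D\chi(\theta)\Lambda(\theta)^{-1}(D\chi(\theta))^{\top}$ has rank at most $\min(p,k)$, invertibility of $J(\theta)$ forces $p \le k$, while injectivity of $D\chi(\theta)\in\Rbb^{p\times k}$ forces $k\le p$. With $D\chi(\theta)$ square and invertible, the identity of (i) can be inverted to give $(\Lambda(\theta))^{-1} = (D\chi(\theta))^{-1}J(\theta)((D\chi(\theta))^{\top})^{-1}$, and inverting once more yields the claimed formula.

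For (iii), define $f(\theta) := \Ebb_{\theta}[\phi(X)]$, and note that the goal is to show $f-\chi$ is (a.e.) constant on the connected open set $\Theta$. I will fix $\theta_0\in\Theta$ and a direction $h\in\Rbb^p$ small enough that the segment $\{\theta_0+th:t\in[0,1]\}$ lies in $\Theta$. By DWS together with Lemma~\ref{lem:DWS-potential} applied at each point $\theta_0+sh$, the path $\{P_{\theta_0+th}\}_{0\le t\le 1}$ solves the continuity equation with time-varying vector field $V_t(x) = \Phi_{\theta_0+th}(x)\,h$. Applying Proposition~\ref{prop:ContinuityEqnLocallyLipschitz} to each component $\phi_j$ of $\phi$ and stacking the results yields
\begin{equation*}
f(\theta_0+h)-f(\theta_0) = \int_0^1 \Ebb_{\theta_0+th}\!\left[(D\phi(X))^{\top}\Phi_{\theta_0+th}(X)\,h\right] \diff t.
\end{equation*}
The inner expectation simplifies via the transport family identity and $\Ebb_{\theta}[(D\phi)^{\top}D\phi] = \Lambda(\theta)$, collapsing $\Lambda(\theta)\,\Lambda(\theta)^{-1}$ to leave $(D\chi(\theta_0+th))^{\top}h$; the fundamental theorem of calculus, applied to the differentiable function $\chi$, then identifies the right-hand side as $\chi(\theta_0+h)-\chi(\theta_0)$. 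A standard open-closed continuation argument along the connected set $\Theta$ propagates the constant, and the final clause of (iii) follows because a continuous function equal to a constant on a dense subset is equal to that constant everywhere.

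The main obstacle is verifying the integrability hypotheses of Proposition~\ref{prop:ContinuityEqnLocallyLipschitz} for the test function $\phi_j$ and for the vector field $V_t$: the local Lipschitz and $L^1$ conditions are immediate from the assumed $\Ebb_{\theta}[\|\phi(X)\|]<\infty$ and local Lipschitz regularity of $\phi$, but the finiteness of $\int_0^1\Ebb_{P_t}[\|\nabla\phi_j\|^2]\diff t$ (and of $\int_0^1\Ebb_{P_t}[\|V_t\|^2]\diff t$) is not pointwise in $h$; it follows from the local Lebesgue integrability of $\theta\mapsto\Ebb_{\theta}[\|D\phi(X)\|^2]$ (together with local boundedness of $\Lambda^{-1}$ and $D\chi$, which in turn uses the transport family structure) only after a Fubini argument, which yields the required bound for Lebesgue a.e. direction $h$. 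This is precisely why the first conclusion of (iii) is an almost-everywhere statement, with the everywhere statement recovered under continuity in the second clause.
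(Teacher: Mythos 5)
Your proof of parts (i) and (ii) is correct. Part (i) is identical to the paper's. Part (ii) is a genuinely cleaner route: you observe that injectivity of $D\chi(\theta)\in\Rbb^{p\times k}$ forces $k\le p$, while invertibility of $J(\theta) = D\chi(\theta)\Lambda(\theta)^{-1}(D\chi(\theta))^{\top}$ forces $\operatorname{rank} J(\theta)=p\le \operatorname{rank} D\chi(\theta)= k$, hence $p=k$ and $D\chi(\theta)$ is a square invertible matrix. This makes the inversion immediate. The paper instead works with a left inverse $(D\chi(\theta))^{-1}$ of the possibly rectangular $D\chi(\theta)$ and carefully notes that the equation $(D\chi(\theta))^{-1}J(\theta)=(\Lambda(\theta))^{-1}(D\chi(\theta))^{\top}$ is well-defined because the range of $J(\theta)$ lies in the range of $D\chi(\theta)$; your version bypasses this bookkeeping and is, if anything, more transparent about what the hypotheses entail.

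For part (iii) the two proofs diverge more substantially. The paper defines $F(\theta)=\Ebb_\theta[\phi(X)]$, invokes Proposition~\ref{prop:ContinuityEqnLocallyLipschitz} together with the ACL characterization of Sobolev functions (Leoni, Theorem~11.45) to conclude that $F$ belongs to $W^{1,1}_{\mathrm{loc}}(\Theta)$ with weak Jacobian $DF=D\chi$, and then concludes that the Sobolev function $F-\chi$ with a.e.\ vanishing gradient is a.e.\ constant on the connected $\Theta$. You instead make the ACL machinery explicit: fixing a segment, applying the continuity equation to each $\phi_j$ to get $f(\theta_0+h)-f(\theta_0)=\int_0^1 (D\chi(\theta_0+th))^\top h\,\diff t=\chi(\theta_0+h)-\chi(\theta_0)$ after collapsing $\Lambda\Lambda^{-1}$, and then running an open--closed continuation over overlapping balls to propagate the a.e.\ constant. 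The two proofs ultimately rest on the same one-dimensional reduction; yours is more self-contained and pedagogically transparent, while the paper's is more concise by quoting the standard theorem. You correctly identify the delicate point — the vector-field and gradient $L^2$ integrability needed for Proposition~\ref{prop:ContinuityEqnLocallyLipschitz} is only available after a Fubini argument for a.e.\ segment — and correctly observe that this is precisely why the first clause of (iii) is an almost-everywhere statement. One remark on the side claim ``local boundedness of $\Lambda^{-1}$ and $D\chi$ ... uses the transport family structure'': this is not automatic from the hypotheses as stated (the Definition only gives differentiability of $\chi$ and invertibility of $\Lambda(\theta)$, neither of which forces local boundedness of $D\chi$ or $\Lambda^{-1}$). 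What you actually need is $\int_0^1\Ebb_{\theta_0+th}[\|V_t\|^2]\diff t=\int_0^1 h^\top J(\theta_0+th)h\,\diff t<\infty$ for a.e.\ $(\theta_0,h)$, and the cleanest route to that — as for $\Ebb_\theta[\|D\phi\|^2]$ — is a Fubini argument against a local integrability assumption on $J$ rather than a boundedness claim. The paper glosses over the same point, so this is a gap shared rather than introduced, but it is worth flagging that the boundedness justification you offered in the final paragraph is not the right one.
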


\begin{proof}
        Write $\Phi_{\theta}$ for the transport linearization of $\Pcal$.
        For property $(i)$, we just use the given form of $\Phi_{\theta}(x)$ to compute
        \begin{align*}
            J(\theta) &= \Ebb_{\theta}[(\Phi_{\theta}(X))^{\top}\Phi_{\theta}(X)] \\
            &= \Ebb_{\theta}[D\chi(\theta)(\Lambda(\theta))^{-1}(D\phi(X))^{\top}D\phi(X)(\Lambda(\theta))^{-1}(D\chi(\theta))^{\top}] \\
            &= D\chi(\theta)(\Lambda(\theta))^{-1}\Ebb_{\theta}[(D\phi(X))^{\top}D\phi(X)](\Lambda(\theta))^{-1}(D\chi(\theta))^{\top} \\
            &= D\chi(\theta)(\Lambda(\theta))^{-1}(D\chi(\theta))^{\top}.
        \end{align*}
        For property $(ii)$, suppose that $D\chi(\theta)$ is injective, and note that injectivity implies that $D\chi(\theta)$ possesses a left inverse, which we will denote by $(D\chi(\theta))^{-1}$.
        Since $(i)$ implies that the range of $D\chi(\theta)$ contains the range of $J(\theta)$, we have the well-defined equation
        \begin{equation}\label{eqn:W-fam-props-1}
            (D\chi(\theta))^{-1}J(\theta) = (\Lambda(\theta))^{-1}(D\chi(\theta))^{\top}.
        \end{equation}
        Multiplying on the left by $\Lambda(\theta)$ and on the right by $(J(\theta))^{-1}$, we get
        \[ \Lambda(\theta)  (D\chi(\theta))^{-1}= (D\chi(\theta))^{\top}  (J(\theta))^{-1}.  \]
        Finally, multiplying on the right by $D\chi(\theta)$ and recalling that $(D\chi(\theta))^{-1}$ was a left inverse for $D\chi(\theta)$ we obtain the desired identity for $\Lambda(\theta)$.
        For property $(iii)$, define the function $F:\Theta\to\Rbb^k$ via $F(\theta):=\Ebb_{\theta}[\phi(X)]$. The assumptions allow us to combine Proposition \ref{prop:ContinuityEqnLocallyLipschitz} and \cite[Theorem 11.45]{Leoni} to conclude that $F$ is a Sobolev function and obtain a formula for its partial derivatives; in condensed notation, this yields
        \begin{equation}\label{eqn:W-fam-props-2}
            \begin{split}
                DF(\theta) &= \Ebb_{\theta}\left[(\Phi_{\theta}(X))^{\top}D\phi(X)\right] \\
                &= D\chi(\theta)(\Lambda(\theta))^{-1}\Ebb_{\theta}[(D\phi(X))^{\top}D\phi(X)] \\
                &= D\chi(\theta),
            \end{split}
        \end{equation}
        for Lebesgue almost every $\theta\in \Theta$. Since $\chi$ was assumed to be locally Lipschitz, it is also a Sobolev function. This implies that the function $F- \chi$ is a Sobolev function whose partial derivatives are almost everywhere equal to zero. Since $\Theta$ was assumed to be connected, this implies that $F(\theta)-\chi(\theta)$ is equal to a constant vector $v$ for Lebesgue almost every $\theta \in \Theta$. 
    \end{proof}

    \begin{remark}[ambiguity with additive constants]\label{rem:W-fam-shift}
        Since in a transport family $\Pcal=\{P_{\theta}:\theta\in\Theta\}$ the potential $\phi$ only appears through its Jacobian $D\phi$, there is always some ambiguity about additive constants; this is the reason for the vector $v\in\Rbb^k$ appearing in property $(iii)$ of Lemma~\ref{lem:W-fam-properties} above, and the reason for various arbitrary additive constants appearing throughout Section~\ref{sec:exact}.
    \end{remark}

    \begin{remark}
        The additional hypothesis in statement $(iii)$ above, requiring the continuity of $\theta\mapsto \Ebb_{\theta}[\phi(X)]$, is easily established in many settings.
        For example, if $\|\phi(x)\|\lesssim \|x\|^2$ then the continuity follows by combining DWS with the characterization of the $W_2$ topology in terms of weak convergence plus convergence of moments.
        In some settings we do not have this growth bound (e.g., Example~\ref{ex:pareto}) but explicit calculations are available.
    \end{remark}

    \begin{proof}[Proof of Theorem~\ref{thm:efficiency-W-fam}]
        We begin with claim $(i)$.
        Indeed, under these assumptions, let $T_n$ be given as above, and note that property $(iii)$ of Lemma~\ref{lem:W-fam-properties} implies $T_n$ is an unbiased estimator of $\chi(\theta)+v$ for some $v\in\Rbb^k$.
        Next, we compute
        \begin{equation*}
            D_{x_i}T_n(X_1,\ldots, X_n) =\frac{1}{n}D \phi(X_i);
        \end{equation*}
        hence
        \begin{equation}\label{eqn:W-fam-1}
            \sum_{i=1}^{n}(D_{x_i}T_n(X_1,\ldots, X_n))^{\top}D_{x_i}T_n(X_1,\ldots, X_n)
            =\frac{1}{n^2}\sum_{i=1}^{n}(D \phi(X_i))^{\top}D\phi(X_i).
        \end{equation}
        Then take the expectation of \eqref{eqn:W-fam-1} with respect to $P_{\theta}$  and apply property $(ii)$ of Lemma~\ref{lem:W-fam-properties} to get
        \begin{equation*}
            \Cos_{\theta}(T_n) = \frac{1}{n}\Ebb_{\theta}[(D\phi(X))^{\top}D\phi(X)] = \frac{1}{n}\Lambda(\theta) = \frac{1}{n}(D\chi(\theta))^{\top}(J(\theta))^{-1}D\chi(\theta).
        \end{equation*}
        This shows that $T_n$ achieves the lower bound, hence it is sensitivity-efficient as claimed.
    
        For claim $(ii)$, suppose that $T_n$ is an unbiased sensitivity-efficient estimator of $\chi(\theta)$.
        Then fix $n=1$, and note that the only inequality appearing in the proof of Theorem~\ref{thm:WCR} is Cauchy-Schwarz. Hence, we see that equality occurs if and only if there exists some $A(\theta)\in\Rbb^{k\times p}$ for each $\theta\in\Theta$ such that we have
        \begin{equation*}
            DT_1(x)A(\theta) = \Phi_{\theta}(x)
        \end{equation*}
        almost surely under $P_{\theta}$.
        Observe that $A(\theta)$ must be injective, since otherwise $J(\theta) = (A(\theta))^{\top}\Sen_{\theta}(T_n)A(\theta)$ is not strictly positive-definite.
        Now let $\phi:=T_1$, and set $\Lambda(\theta):=\Ebb_{\theta}[(D\phi(X))^{\top}D\phi(X)]$ for $\theta\in\Theta$.
        We may compute the Wasserstein information of $\Pcal$ at $\theta$ to be
        \begin{equation*}
            J(\theta) = (A(\theta))^{\top}\Ebb_{\theta}[(D\phi(X))^{\top}D\phi(X)]A(\theta)= (A(\theta))^{\top}\Lambda(\theta)A(\theta).
        \end{equation*}
        Thus, the Wasserstein-Cram\'er-Rao lower bound reads 
        \begin{equation}\label{eqn:W-fam-2}
            (D\chi(\theta))^{\top}(J(\theta))^{-1}D\chi(\theta) = (D\chi(\theta))^{\top}(A(\theta))^{-1}(\Lambda(\theta))^{-1}(A(\theta))^{-\top}D\chi(\theta).
        \end{equation}
        Now, $T_1$ being efficient means that for each $\theta\in\Theta$ we may equate $\Cos_{\theta}(T_1) = \Lambda(\theta)$ with the right side of \eqref{eqn:W-fam-2}, yielding
        \begin{equation*}
            \Lambda(\theta) = (D\chi(\theta))^{\top}(A(\theta))^{-1}(\Lambda(\theta))^{-1}(A(\theta))^{-\top}D\chi(\theta).
        \end{equation*}
        Since $D\chi(\theta)$ and $A(\theta)$ are both injective, this equation has unique solution given by
        \begin{equation*}
            A(\theta) = (\Lambda(\theta))^{-1}(D\chi(\theta))^{\top}.
        \end{equation*}
        (The proof is identical to the proof of property $(ii)$ of Lemma~\ref{lem:W-fam-properties}.)
        Therefore, we have shown
        \begin{equation*}
            \Phi_{\theta}(x) = D\phi(x)(\Lambda(\theta))^{-1}(D\chi(\theta))^{\top}
        \end{equation*}
        almost surely under $P_{\theta}$. Hence $\Pcal$ is a transport family with parameterization $\chi$.
        This finishes the proof.
    \end{proof}

    \subsection{Proofs from Section~\ref{sec:asymp}}\label{app:proofs-asymp}

    Lastly, we prove our results concerning the Wasserstein projection estimator (WPE), mainly its asymptotic sensitivity-efficiency under some assumptions.
    
    \begin{proof}[Proof of Lemma~\ref{DWS-1s}]
        Recall that we have
        \begin{equation*}
            \transport_{P_{\theta}\to P_{\theta + th}}(x) = F_{\theta+th}^{-1}(F_{\theta}(x)),
        \end{equation*}
        so we may compute
        \begin{align*}
            &\int_{\Rbb}|\transport_{P_{\theta}\to P_{\theta + th}}(x)-x - t\Phi_{\theta}(x)h|^2\diff P_{\theta}(x) \\
            &= \int_{\Rbb}|F_{\theta+th}^{-1}(F_{\theta}(x))-F_{\theta}^{-1}(F_{\theta}(x)) - t\Phi_{\theta}(F_{\theta}^{-1}(F_{\theta}(x)))h|^2\diff P_{\theta}(x)\\
            &= \int_{\Rbb}|F_{\theta+th}^{-1}(u)-F_{\theta}^{-1}(u) - t\Phi_{\theta}(F_{\theta}^{-1}(u))h|^2\diff u.
        \end{align*}
        This means that the transport linearization, if it exists, must be given by
        \begin{equation*}
            \Phi_{\theta}(F_{\theta}^{-1}(u)) = \left(\nabla_{\theta}F^{-1}_{\theta}(u)\right)^{\top}
        \end{equation*}
        for Lebesgue a.e. $0\le u\le 1$, hence
        \begin{equation*}
            \Phi_{\theta}(x) = \left(\nabla_{\theta}F^{-1}_{\theta}(F_{\theta}(x))\right)^{\top}
        \end{equation*}
        for $P_{\theta}$-a.e. $x\in\Rbb$.
        In this case, we can compute the Wasserstein information matrix to be
        \begin{equation*}
            J(\theta) = \int_{\Rbb}\nabla_{\theta}F^{-1}_{\theta}(F_{\theta}(x))\left(\nabla_{\theta}F^{-1}_{\theta}(F_{\theta}(x)\right)^{\top}\diff P_{\theta}(x) = \int_{0}^{1}\nabla_{\theta}F^{-1}_{\theta}(u)\left(\nabla_{\theta}F^{-1}_{\theta}(u)\right)^{\top}\diff u,
        \end{equation*}
        as claimed.
    \end{proof}

    \begin{proof}[Proof of Theorem~\ref{thm:WPE-efficient}]
        To begin, we let $x_1,\ldots, x_n\in\Rbb$ denote non-random points and we make some calculations.
        For convenience, let us adopt the notation $F_{\theta}(x) =: F(\theta,x)$ and $F_{\theta}^{-1}(u) =:H(\theta,u)$ for simplicity.
        Then define the function $G_n:\Theta\to\Rbb$ via
        \begin{equation}
            G_n(\theta) = W_2^2(P_{\theta}, \bar P_n) = \int_{0}^{1}|H(\theta,u)-\bar F_n^{-1}(u)|^2\diff u,
            \label{eqn:DiscreteObjective}
        \end{equation}
        where $\bar{F}_n^{-1}$ is the quantile function of the measure $\bar P_n:=\frac{1}{n}\sum_{i=1}^{n}\delta_{x_i}$, i.e., the empirical measure of (non-random) points $x_1,\ldots, x_n\in\Rbb$. By $T_n^{\WPE}(x_1,\ldots,x_n)$ we thus just mean a global minimizer of $G_n$.
        Writing $\hat \theta_n = T_n^{\WPE}$ for convenience, we see that
        \begin{equation*}
            \nabla_{\theta}G_n(\theta) = 2\int_{0}^{1}\left(H(\theta,u)-\bar F_n^{-1}(u)\right)\nabla_{\theta} H(\theta,u)\diff u
        \end{equation*}
        which implies        \begin{equation}\label{eqn:first-order-cond}
            \int_{0}^{1}\left(H(\hat\theta_n,u)-\bar F_n^{-1}(u)\right)\nabla_{\theta}H(\hat\theta_n,u)\diff u = 0.
        \end{equation}
        Now we differentiate \eqref{eqn:first-order-cond} with respect to $x_i$ for $1\le i\le n$, assuming that the values $x_1,\ldots, x_n$ are distinct
        (This will be satisfied almost surely when we consider the random inputs $X_1,\ldots, X_n$ since these are i.i.d. samples from a distribution which is absolutely continuous with respect to the Lebesgue measure.)
        If we write $\tau$ for the permutation of $\{1,\ldots,n\}$ which sorts $x_1,\ldots, x_n$ into increasing order, then we have
        \begin{equation}\label{eqn:first-order-cond-diff}
            \begin{split}
                &\int_{0}^{1}\left(\left(\nabla_{\theta} H(\hat\theta_n,u)\right)^{\top}\frac{\partial \hat\theta_n}{\partial x_i}-\ind\left\{\frac{\tau(i)-1}{n}\le u <\frac{\tau(i)}{n}\right\}\right)\nabla_{\theta}H(\hat\theta_n,u)\diff u \\
                &\qquad\qquad\qquad\qquad\qquad+\int_{0}^{1}\left(H(\hat\theta_n,u)-\bar F_n^{-1}(u)\right)\nabla_{\theta}^2H(\hat\theta_n,u)\frac{\partial \hat\theta_n}{\partial x_i}\diff u = 0.
            \end{split}
        \end{equation}
        Now write
        \begin{equation*}
            \hat M_n := \left(\int_{0}^{1}\nabla_{\theta}H(\hat\theta_n,u)(\nabla_{\theta}H(\hat\theta_n,u))^{\top}\diff u+\int_{0}^{1}\left(H(\hat\theta_n,u)-\bar F_n^{-1}(u)\right)\nabla_{\theta}^2H(\hat\theta_n,u)\diff u\right)
        \end{equation*}
        so that \eqref{eqn:first-order-cond-diff} rearranges to
        \begin{equation*}
            \frac{\partial \hat\theta_n}{\partial x_i} = \hat M_n^{-1}\int_{\frac{\tau(i)-1}{n}}^{\frac{\tau(i)}{n}}\nabla_{\theta}H(\hat\theta_n,u)\diff u.
        \end{equation*}
        Then by taking the outer product, summing over $1\le i\le n$, and scaling by $n$, we get
        \begin{equation}\label{eqn:WPE-sens-outer}
            \begin{split}
            n\sum_{i=1}^{n}\left(\frac{\partial \hat\theta_n}{\partial x_i}\right)\left(\frac{\partial \hat\theta_n}{\partial x_i}\right)^{\top} &= \hat M_n^{-1}\,n\sum_{i=1}^{n}\left(\int_{\frac{\tau(i)-1}{n}}^{\frac{\tau(i)}{n}}\nabla_{\theta}H(\hat\theta_n,u)\diff u\right)\left(\int_{\frac{\tau(i)-1}{n}}^{\frac{\tau(i)}{n}}\nabla_{\theta}H(\hat\theta_n,u)\diff u\right)^{\top}\hat M_n^{-1} \\
            &= \hat M_n^{-1}\,n\sum_{i=1}^{n}\left(\int_{\frac{i-1}{n}}^{\frac{i}{n}}\nabla_{\theta}H(\hat\theta_n,u)\diff u\right)\left(\int_{\frac{i-1}{n}}^{\frac{i}{n}}\nabla_{\theta}H(\hat\theta_n,u)\diff u\right)^{\top}\hat M_n^{-1}.
            \end{split}
        \end{equation}
        Notice that the permutation $\tau$ disappears in the second equality, hence this identity holds as long as $x_1,\ldots, x_n$ are distinct.

        Now we return to the setting that $X_1,\ldots, X_n$ are i.i.d. samples from $P_{\theta}$ for some fixed $\theta\in\Theta$, and we consider the asymptotics of \eqref{eqn:WPE-sens-outer} as $n\to\infty$.
        First, we claim that $\hat M_n\to J(\theta)$ almost surely as $n\to\infty$, under $P_{\theta}$.
        To see this, note that the consistency of WPE, assumption $(i)$, and Lemma~\ref{DWS-1s} imply that
        \begin{equation*}
            \int_{0}^{1}\nabla_{\theta}H(\hat\theta_n,u)(\nabla_{\theta}H(\hat\theta_n,u))^{\top}\diff u \to \int_{0}^{1}\nabla_{\theta}H(\theta,u)(\nabla_{\theta}H(\theta,u))^{\top}\diff u = J(\theta) 
        \end{equation*}
        holds almost surely under $P_{\theta}$.
        Also, empirical quantile consistency almost surely (say, in $L^2([0,1])$) and assumption $(ii)$ imply
        \begin{equation*}
            \int_{0}^{1}\left(H(\hat\theta_n,u)-\bar F_n^{-1}(u)\right)\nabla_{\theta}^2H(\hat\theta_n,u)\diff u \to 0
        \end{equation*}
        almost surely under $P_{\theta}$.
        Thus, we have $\hat M_n\to J(\theta)$ almost surely, as claimed.

        Lastly, we claim that
        \begin{equation}\label{eqn:WPE-last}
            n\sum_{i=1}^{n}\left(\int_{\frac{i-1}{n}}^{\frac{i}{n}}\nabla_{\theta}H(\hat\theta_n,u)\diff u\right)\left(\int_{\frac{i-1}{n}}^{\frac{i}{n}}\nabla_{\theta}H(\hat\theta_n,u)\diff u\right)^{\top} \to \int_{0}^{1}\nabla_{\theta}H(\theta,u)(\nabla_{\theta}H(\theta,u))^{\top}\diff u,
        \end{equation}
        holds in probability under $P_{\theta}$; by Lemma~\ref{DWS-1s} again, the right side is equal to $J(\theta)$.
        To see this, let us write $P_n:L^2([0,1];\Rbb^{p\times p})\to L^2([0,1];\Rbb^{p\times p})$ for the linear operator of conditional expectation with respect to the $\sigma$-algebra generated by the partition $\{0,\sfrac{1}{n},\sfrac{2}{n},\ldots, 1\}$.
        Since these $\sigma$-algebras are not nested, we do not have the property that $\{P_n\}_{n\in\Nbb}$ converges to the identity map in the strong operator topology, meaning $P_nf\to f$ in norm for each $f\in L^2([0,1];\Rbb^{p\times p})$; rather, each subsequence of $\{P_n\}_{n\in\Nbb}$ admits a further subsequence that converges to the identity map in the strong operator topology.
        As $\{P_n\}_{n\in\Nbb}$ are uniformly continuous, it follows that, if $f_n\to f$ in norm, then every subsequence of $\{P_{n}f_n\}_{n\in\Nbb}$ admits a further subsequence converging to $f$.
        Using this fact, it follows that \eqref{eqn:WPE-last} holds in probability; we have
        \begin{equation*}
            (P_nf)(u) := n\int_{\frac{i-1}{n}}^{\frac{i}{n}}f(x)\diff x \qquad\textnormal{where}\qquad \frac{i-1}{n}\le u< \frac{i}{n},
        \end{equation*}
        we set $f_n:= \nabla H(\hat{\theta}_n,\cdot)$ for each $n\in\Nbb$, and we recall that a sequence of random variables converges in probability if and only if each subsequence admits a further subsequence converging in probability.
        Finally, Lemma~\ref{DWS-1s} implies that the right side of \eqref{eqn:WPE-last} equals $J(\theta)$, so we have shown
        \begin{equation*}
            n\sum_{i=1}^{n}\left(\frac{\partial \hat\theta_n}{\partial x_i}\right)\left(\frac{\partial \hat\theta_n}{\partial x_i}\right)^{\top} \to (J(\theta))^{-1}J(\theta)(J(\theta))^{-1} = (J(\theta))^{-1}
        \end{equation*}
        in probability under $P_{\theta}$, as needed.
    \end{proof}

    \begin{proof}[Proof of Theorem~\ref{thm:WPEVariance}]
For convenience, write $\hat\theta_n := T_n^{\WPE}(X_1,\ldots, X_n)$.
As in the proof of Theorem~\ref{thm:WPE-efficient}, the function $G_n$ defined in \eqref{eqn:DiscreteObjective} has gradient 
\begin{equation*}
            \nabla_{\theta}G_n(\theta) = \int_{0}^{1}\left( F_{\theta}^{-1}H(u)-\bar F_n^{-1}(u)\right) \nabla_\theta F_{\theta}^{-1}(u)\diff u,
        \end{equation*}
hence its Hessian is
\begin{align*}
   \nabla_{\theta}^2 G_n(\theta) & =      \int_{0}^{1} \nabla_\theta F^{-1}_{\theta}(u) (\nabla_\theta F_{\theta}^{-1}(u))^{\top}\diff u    +  \int_0^1 ( F_{\theta}^{-1}(u)  - \bar{F}_{n}^{-1}(u) ) \nabla_{\theta}^2 F_{\theta}^{-1}(u) \diff u,
   \\ &= J(\theta) +    \int_0^1 ( F_{\theta}^{-1}(u) - \bar{F}_{n}^{-1}(u) ) \nabla^2_\theta F_{\theta}^{-1}(u) \diff u
   \label{eqn:HessianGn}
\end{align*}
where we used Lemma~\ref{DWS-1s} in the second equality.
Now we consider a Taylor expansion of $\nabla_\theta G_n$ around $\theta^*$ to get the bound
\begin{equation*}
    \nabla_{\theta}G_n(\hat \theta_n) = \nabla_{\theta}G_n(\theta^{\ast}) + \nabla_{\theta}^2G_n(\hat\theta_n)(\hat\theta_n-\theta^{\ast}) + o_{\Pbb_{\theta}}(\|\hat\theta_n-\theta^{\ast}\|).
\end{equation*}
Since $\hat\theta_n$ satisfies $\nabla_{\theta}G_n(\hat\theta_n) = 0$, it follows that $\sqrt{n}(\hat\theta_n-\theta^{\ast})$ has the same asymptotic distribution as
\begin{equation*}
    -\left(\nabla_{\theta}^2G_n(\hat\theta_n)\right)^{-1}\nabla_{\theta}G_n(\theta^{\ast}).
\end{equation*}
To find this asymptotic distribution, we first note that, under our assumptions, we have
\[ \nabla_{\theta}^2G_n(\theta^*) \rightarrow J(\theta^*)\]
almost surely under $\Pbb_{\theta^{\ast}}$ as $n\to\infty$.
Second, note that the functional central limt theorem for quantiles (see \cite[Theorem 4.6]{DelBarrio2005}) implies  
\begin{align*}
   \sqrt{n }\nabla_\theta G_n(\theta^*) & = \sqrt{n} \int_{0}^{1}\left(F_{\theta^{\ast}}^{-1}(u)-\bar F_n^{-1}(u)\right) \nabla_\theta F_{\theta^{\ast}}^{-1}(u)\diff u 
   \\ & \rightarrow  \int_{0}^1 \frac{B(u)}{p_{\theta^*}(F_{\theta^*}^{-1}(u))} \nabla_\theta F_{\theta^{\ast}}^{-1}(u) \diff u   , 
\end{align*}
in distribution under $\Pbb_{\theta^\ast}$ as $n\to\infty$, where $\{B(u)\}_{0\le u\le 1}$ is a standard Brownian bridge.
Combining the two observations above with Slutsky's theorem implies desired result.
\end{proof}

The following result is well-known and a statement in a classical smooth setting can be found in \cite[Theorem 6 in Appendix C.4]{EvansPDE}. 

\begin{lemma}[differentiation formula for moving regions]
    Let $S\subseteq \Rbb^d$ be a bounded set with a piecewise smooth boundary $\partial S$, and let $\{ S(t) \}_{t \in (-\varepsilon, \varepsilon)}$ be a one-parameter family of sets obtained as $S(t)= \gamma(t, S)$ where $\gamma: (-\varepsilon, \varepsilon) \times  \Rbb^d \mapsto \Rbb^d$ is a differentiable function such that $\gamma(t, \cdot)$ is a bijection for all $t \in -(\varepsilon, \varepsilon)$ and $\gamma(0,x) = x$ for all $x \in \Rbb^d$. If $f=f(t,x)$ is a continuous and piecewise smooth function, then
    \begin{equation}\label{eq:moving-regions}
        \frac{\diff }{\diff t}\int_{S(t)}f(x) \diff x = \int_{S(t)} \frac{\partial}{\partial t} f(t,x)\diff x + \int_{\partial S(t)}f(t,x) v_t(x) \cdot\vec{n}_t(x) \diff\mathcal{H}^{d-1}(x)
    \end{equation}
    where $v_t$ is the velocity of the moving boundary $\partial S(t)$ and $\vec{n}_t$ is the outward pointing unit normal to the boundary of $S(t)$. 
    \label{lem:DiffMovingRegions}
\end{lemma}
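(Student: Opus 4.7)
The plan is to reduce the time-dependent domain to a fixed one by pulling back through $\gamma$, differentiate under the integral sign on the fixed domain, and then apply the divergence theorem to rewrite the resulting volume term as a boundary flux. This is the standard route to the Reynolds transport theorem; the role of our hypotheses is only to verify that every step is justified in the piecewise-smooth setting.

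First I would set $\Psi(t,y) := \gamma(t,y)$ and $J(t,y) := \det D_y \Psi(t,y)$, and use the fact that $\Psi(t,\cdot)$ is a bijection with $\Psi(0,y) = y$ to conclude that $J(t,y) > 0$ on an interval around $t=0$. The change of variables $x = \Psi(t,y)$ gives
\begin{equation*}
\int_{S(t)} f(t,x)\, dx = \int_S f(t,\Psi(t,y))\, J(t,y)\, dy,
\end{equation*}
now over the fixed region $S$. Because $f$ and $\Psi$ are (piecewise) smooth and $S$ is bounded, dominated convergence lets me differentiate inside the integral, producing
\begin{equation*}
\int_S \bigl[\partial_t f(t,\Psi(t,y)) + \nabla_x f(t,\Psi(t,y))\cdot \partial_t\Psi(t,y)\bigr] J(t,y)\, dy + \int_S f(t,\Psi(t,y))\,\partial_t J(t,y)\, dy.
\end{equation*}

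Next I would use the Jacobi identity $\partial_t J(t,y) = J(t,y)\,(\Div_x v_t)(\Psi(t,y))$, where the velocity field is defined on all of $\Rbb^d$ by $v_t(x) := \partial_t \Psi(t, \Psi(t,\cdot)^{-1}(x))$; note that this extension agrees with the boundary velocity appearing in \eqref{eq:moving-regions} when restricted to $\partial S(t)$. Changing variables back to $x = \Psi(t,y)$ collapses the two volume integrals into
\begin{equation*}
\int_{S(t)} \partial_t f(t,x)\, dx + \int_{S(t)} \bigl[\nabla_x f(t,x)\cdot v_t(x) + f(t,x)\, \Div_x v_t(x)\bigr]\, dx = \int_{S(t)}\partial_t f\, dx + \int_{S(t)} \Div_x(f\, v_t)\, dx,
\end{equation*}
after applying the product rule to recognize $\nabla_x f\cdot v_t + f\,\Div_x v_t = \Div_x(f\, v_t)$.

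The final step is to apply the divergence theorem to the vector field $f\, v_t$ on $S(t)$, yielding
\begin{equation*}
\int_{S(t)} \Div_x(f\, v_t)\, dx = \int_{\partial S(t)} f(t,x)\, v_t(x)\cdot \vec n_t(x)\, d\mathcal H^{d-1}(x),
\end{equation*}
which combined with the previous display gives \eqref{eq:moving-regions}. The main obstacle is justifying this last step in the piecewise-smooth setting: $\partial S$ need not be $C^1$, and the product $f\,v_t$ is only piecewise smooth. I would handle this by invoking the divergence theorem for Lipschitz domains (and BV/Sobolev vector fields), or by the classical device of approximating $S$ from inside by smooth subdomains and $f$ by smooth functions, passing to the limit using the boundedness of $S$, the continuity of $f$, and the smoothness of $\gamma$ to control each boundary term. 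All other steps are routine given the regularity assumed in the statement.
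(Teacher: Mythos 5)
The paper does not prove this lemma: it states it as a well-known result (the Reynolds transport theorem) and cites \cite[Appendix~C.4]{EvansPDE} for the classical smooth-boundary case. Your argument is exactly the standard proof one would find there --- pull back to the fixed domain via $x = \gamma(t,y)$, differentiate under the integral, invoke Jacobi's formula $\partial_t J = J\,\Div_x v_t$ for the Eulerian velocity $v_t = \partial_t\gamma \circ \gamma(t,\cdot)^{-1}$, collapse via the product rule, and finish with the divergence theorem --- and it is correct. You also correctly identify the only delicate point in the piecewise-smooth setting (the divergence theorem for the field $f\,v_t$ over a domain with merely Lipschitz boundary), and the remedy you propose (Gauss--Green for sets of finite perimeter / Lipschitz domains, or interior smooth approximation) is the right one; nothing is missing.
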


\begin{proof}[Proof of Proposition~\ref{prop:Derivatives}]
First we show~\eqref{eqn:AuxWPEPartialDerivsDeveloped1}.
To do this, observe that 
\begin{align*}
   \nabla_{x_i} W_2^2(P_\theta, \bar{P}_n) & = \nabla_{x_i} \left( \sum_{j=1}^n \int_{V_j(\theta)} \|x_j - x\|^2 p_\theta(x) \diff x  \right) 
   \\& = \nabla_{x_i} \left( \sum_{j=1}^n \int_{V_j(\theta)} ( \|x_j - x\|^2 - b_j) p_\theta(x) \diff x  \right),  
\end{align*}
thanks to the fact that $\sum_{j=1}^n b_j=0$ and that $\int_{V_j(\theta)} p_\theta(x) \diff x = \frac{1}{n}$.
From the above and Lemma \ref{lem:DiffMovingRegions}, we see that
\begin{align*}
   \nabla_{x_i} \left( \sum_{j=1}^n \int_{V_j(\theta)} ( \|x_j - x\|^2 - b_j) p_\theta(x) \diff x  \right)  & = \sum_{j=1}^n \int_{V_j(\theta)}\nabla_{x_i}(\|x-x_j\|^2 - b_j) p_\theta(x) \diff x 
   \\& = \int_{V_i(\theta)} 2(x_i- x) p_\theta(x) \diff x,
\end{align*}
where we observe that the boundary terms coming from the differentiation formula for moving regions cancel out in pairs, given that for $i \sim j$ we have \eqref{eqn:AuxWPELaguerre1} and the outer normal of $V_i$ along $\Gamma_{ij}$ is equal to the negative of the outer normal of $V_j$ along $\Gamma_{ij}$.  Therefore,
\begin{equation}
   \nabla_{x_i} W_2^2(P_\theta, \bar{P}_n) = \int_{V_i(\theta)} 2(x_i -x) p_\theta(x) \diff x  . 
   \label{eqn:AuxWPEDiffxi}
\end{equation}
From \eqref{eqn:AuxWPEDiffxi} we thus obtain
\begin{align}
\begin{split}
\frac{\partial}{\partial \theta}  \nabla_{x_i} W_2^2(P_\theta, \bar{P}_n)
  &= \frac{\partial }{\partial \theta} \int_{V_i(\theta)} 2(x_i - x) p_\theta(x) \diff x 
  \\ &=  \int_{V_i(\theta)} 2(x_i - x) \frac{\partial  p_\theta }{\partial \theta} \diff x \, + \, 2\int_{\partial V_i(\theta) } (x_i-x)(u_\theta(x))^{\top}\vec{n}(x) p_\theta(x) \diff  \mathcal{H}^{d-1}(x),
  \end{split}
 \label{eqn:AuxWPE1}
\end{align}
where in the last line we used again the differentiation formula for moving regions; in the above, $u_\theta$ is the velocity (at $\theta$) of the boundary $\partial V_i(\theta)$ as $\theta$ changes, and $\vec{n}$ represents the outer unit normal to $\partial V_i(\theta)$. In order to develop the right hand side of \eqref{eqn:AuxWPE1}, we first claim that
\begin{equation}(u_\theta(x))^{\top}\vec{n}(x) = -\frac{1}{2} \frac{\frac{\partial b_j}{\partial \theta} - \frac{\partial b_i}{\partial \theta}}{\|x_i- x_j\|}
\label{eqn:AuxWPEFlowMovingRegion}
\end{equation}
for all $x \in \Gamma_{ij}$.
To see this, consider a point $x(\theta) \in \Gamma_{ij}(\theta)$ traveling as the boundary $\Gamma_{ij}$ changes with $\theta$. By the definition of $\Gamma_{ij}(\theta)$ in equation \eqref{eqn:AuxWPEHyperplane}, we must have
\[ 0 =2  (x(\theta))^{\top}  (x_j -x_i)  +\|x_i\|^2 - \|x_j\|^2 +b_j-b_i.  \]
Differentiating both sides of the above equation with respect to $\theta$ and using the fact that $\vec{n}(x) = (x_j - x_i)/\|x_j - x_i\|$ for all $x \in \Gamma_{ij}$, we obtain \eqref{eqn:AuxWPEFlowMovingRegion}. With identity \eqref{eqn:AuxWPEFlowMovingRegion} in hand, we can rewrite \eqref{eqn:AuxWPE1} as
\begin{equation*}
 \frac{\partial}{\partial \theta}   \nabla_{x_i} W_2^2(P_\theta, \bar{P}_n)  = 2\int_{V_i(\theta)} (x_i - x) \frac{\partial p_\theta }{\partial \theta}  \diff x \, 
    -  \,  \sum_{j \sim i}\int_{\Gamma_{ij} }(x_i-x) \frac{\frac{\partial b_j}{\partial \theta} - \frac{\partial b_i}{\partial \theta}}{\|x_i- x_j\|}   p_\theta(x)\diff  \mathcal{H}^{d-1}(x).
\end{equation*}
On the other hand, the continuity equation in strong form (which holds under Assumption \ref{assum:AdditionalRegularity}) and the divergence theorem allow us to rewrite the first term on the right hand side of the above expression as 
\begin{align*}
  2\int_{V_i(\theta)} (x_i - x) \frac{\partial p_\theta}{\partial \theta} \diff x  &= - 2\int_{V_i(\theta)} (x_i - x)  \mathrm{div}(\Phi_\theta p_\theta)\diff x 
   \\ & =    - 2 \int_{V_i(\theta)} \Phi_\theta(x) p_\theta(x) \diff x  + 2 \int_{\partial V_i(\theta)} (x-x_i) (\Phi_\theta(x))^{\top} \vec{n}(x) p_\theta(x) \diff \mathcal{H}^{d-1}(x)
    \\ & =    - 2 \int_{V_i(\theta)} \Phi_\theta(x) p_\theta(x) \diff x  + 2 \sum_{j \sim i} \int_{\Gamma_{ij}} (x-x_i) (\Phi_\theta(x))^{\top} \frac{x_j-x_i}{\|x_j- x_i\|} p_\theta(x) \diff \mathcal{H}^{d-1}(x).
\end{align*}
Combining the two previous displays, we deduce that $\frac{\partial}{\partial \theta} \nabla_{x_i}  W_2^2(P_\theta, \bar{P}_n) $ is equal to the right hand side of \eqref{eqn:AuxWPEPartialDerivsDeveloped1} and in particular is continuous in all its arguments. Similarly, a direct computation reveals that  
$\nabla_{x_i} \frac{\partial}{\partial \theta} W_2^2(P_\theta, \bar{P}_n)$ is continuous in all its arguments (although the expression obtained from a direct computation is less interpretable), so Clairaut's theorem implies $\nabla_{x_i} \frac{\partial}{\partial \theta}  W_2^2(P_\theta, \bar{P}_n) = \frac{\partial}{\partial \theta} \nabla_{x_i}  W_2^2(P_\theta, \bar{P}_n)$, whence \eqref{eqn:AuxWPEPartialDerivsDeveloped1}.

    Second, we show equation~\eqref{eqn:AuxWPEPartialDerivsDeveloped2}.
    Towards this end, observe that
    \begin{equation*}\label{eqn:D-Representation_new}
        \frac{\partial}{\partial \theta} W_2^2(P_\theta,\bar{P}_n) =    \int_{\Rbb^d} \psi_{P_\theta \to \bar{P}_n }(x) \frac{\partial p_{\theta}}{\partial \theta}(x)\diff x,
    \end{equation*}
    which is derived similarly to the formula for $\nabla_{x_i} W_2^2(P_\theta, \bar{P}_n)$ at the beginning of the proof the previous part. Differentiating the above expression with the aid of the differentiation formula for moving regions we deduce \eqref{eqn:AuxWPEPartialDerivsDeveloped2}.
\end{proof}

\begin{remark}
In \cite[Section 3]{CarlierGalichonGalichon}, the authors present a similar analysis to obtain formulas for the second derivatives of the squared-Wasserstein distance between a parameterized measure and an empirical measure. Details are presented in a two-dimensional setting.  
\end{remark}

\begin{proof}[Proof of Theorem~\ref{thm:WPE-efficient-gen}]
Because of Assumption \ref{assum:AdditionalRegularity}, we may apply Proposition~\ref{prop:Derivatives} to see that~\eqref{eqn:WPE_Sensitivity} holds, i.e. the function $\mathcal{L}(x_1,\ldots,x_n,\theta)=W_2^2(P_{\theta},n^{-1}\sum_{i=1}^{n}\delta_{x_i})$ satisfies
\begin{align*}
    &n\sum_{i=1}^n \left\| \nabla_{x_i} \hat{\theta}_n(X_1, \dots, X_n) \right\|^2 \\
    &= \left( \frac{\partial^2 }{\partial \theta^2} \mathcal{L}(X_1,\ldots,X_n,\hat{\theta}_n)  \right)^{-2} \frac{1}{n}\sum_{i=1}^n \left\|  n\nabla_{x_i} \frac{\partial }{\partial \theta} \mathcal{L}(X_1,\ldots,X_n,\hat{\theta}_n)\right\|^2.
\end{align*}
(Recall that we previously stated it only as a heuristic calculation.)
We will handle the convergence of each term separately.

First, we consider the summation.
Use~\eqref{eqn:AuxWPEPartialDerivsDeveloped1} to divide it into a ``bulk'' and ``error'' term
    \begin{equation*}
        \frac{1}{n}\sum_{i=1}^{n}\left\| n \nabla_{x_i} \frac{\partial }{\partial \theta} \mathcal{L}(X_1,\ldots,X_n,\hat{\theta}_n)\right\|^2 = \mathfrak{B}_n + \mathfrak{E}_n,
    \end{equation*}
    where
\begin{align*}
\mathfrak{B}_n &:= \frac{1}{n} \sum_{i=1}^n \left\| n \int_{V_i} \Phi_{\hat{\theta}_n}(x) p_{\hat{\theta}_n}(x) \diff x   \right\|^2, \qquad\textnormal{ and }\\
\mathfrak{E}_n &:= \frac{1}{n} \sum_{i=1}^n \left\| n\sum_{j \sim i} \int_{\Gamma_{ij}} \left(2(\Phi_{\hat \theta_n}(x))^{\top}(X_j - X_i) +  \left(  \frac{\partial b_j}{\partial \theta}  - \frac{\partial b_i}{\partial \theta}  \right) \right) \frac{x-X_i} {\|X_i - X_j\|}p_{\hat \theta_n}(x) \diff \mathcal{H}^{d-1}(x)  \right\|^2.
\end{align*}
We have $\mathfrak{B}_n\to J(\theta^{\ast})$ in probability, following the same reasoning as Theorem~\ref{thm:WPE-efficient} for the case $d=1$, using the asymptotic consistency of $\hat{\theta}_n$ and the assumed regularity of the map $(x, \theta ) \mapsto (p_\theta(x),\Phi_\theta(x))$.
For the error term, write $R_n$ for the error appearing in condition $(i)$ of the theorem, and use \eqref{eqn:CTransformDualPotential} to compute
\begin{align*}
\frac{\partial b_j}{\partial \theta} -  \frac{\partial b_i}{\partial \theta} & =  \frac{\partial }{\partial \theta} \psi^c_{P_{\theta}\to \bar P_n}(X_j) -  \frac{\partial }{\partial \theta} \psi^c_{P_{\theta}\to \bar P_n}(X_i)
\\& = \int_0^1\left(\nabla_{x}\frac{\partial }{\partial \theta} \psi^c_{P_{\theta}\to \bar P_n}((1-t)X_i + t X_j)\right)^{\top}(X_j - X_i) \diff t
\\& = \int_0^1\left(\nabla_{x}\frac{\partial }{\partial \theta} \psi^c_{P_{\theta}\to P_{\theta^*}}((1-t)X_i + t X_j)\right)^{\top} (X_j - X_i) \diff t + R_n\|X_i-X_j\|.
\end{align*}
We can further manipulate first term on the right side, using \eqref{aux:GradientDualPotential} and Lemma \ref{lem:AuxDuality}:
\begin{align*}
&\int_0^1\left(\nabla_{x}\frac{\partial }{\partial \theta} \psi^c_{P_{\theta}\to P_{\theta^*}}((1-t)X_i + t X_j)\right)^{\top} (X_j - X_i) \diff t
\\ & = \int_0^1\left(\frac{\partial }{\partial \theta}\nabla_{x} \psi^c_{P_{\theta}\to P_{\theta^*}}((1-t)X_i + t X_j)\right)^{\top} (X_j - X_i) \diff t
\\& = -\int_0^1\left(\frac{\partial}{\partial \theta} ( 2 \transport_{P_{\theta^*} \to P_\theta}((1-t)X_i+tX_j) - 2((1-t)X_i + tX_j))\right)^{\top} (X_j- X_i) \diff t
\\& =- 2 \int_0^1\left(\frac{\partial}{\partial \theta} \transport_{P_{\theta^*} \to P_\theta}((1-t)X_i+tX_j)\right)^{\top} (X_j- X_i) \diff t + O(\|X_j-X_i\|^2).
\end{align*}
When $\theta= \hat{\theta}_n$, we can use the smoothness of part $(iii)$ of Assumption~\ref{assum:AdditionalRegularity} to further write
\begin{align*}
&-2 \int_0^1\left(\frac{\partial}{\partial \theta} \transport_{P_{\theta^*} \to P_{\hat \theta_n}}((1-t)X_i+tX_j)\right)^{\top} (X_j- X_i) \diff t \\
&= -2\int_0^1 \left(\Phi_{\theta^*}((1-t)X_i+tX_j)\right)^{\top} (X_j- X_i) \diff t
+ O(\|\hat{\theta}_n - \theta^*\|\cdot\|X_i - X_j\|) \\
&= -2 (\Phi_{\theta^*}(X_i))^{\top} (X_j- X_i)  + O\bigg((\|\hat{\theta}_n - \theta^*\| + \|X_i-X_j\| )\|X_i - X_j\|\bigg).
\end{align*}
Combining the three previous displays yields
\begin{equation*}
\frac{\partial b_j}{\partial \theta} -  \frac{\partial b_i}{\partial \theta} = -2 (\Phi_{\theta^*}(X_i))^{\top} (X_j- X_i)  + O\bigg((R_n+\|\hat{\theta}_n - \theta^*\| + \|X_i-X_j\| )\|X_i - X_j\|\bigg),
\end{equation*}
so plugging in to the definition of $\mathfrak{E}_n$ shows
\begin{align*}
    \mathfrak{E}_n &= \frac{1}{n} \sum_{i=1}^n \left\| n\sum_{j \sim i} \int_{\Gamma_{ij}} \left( O\bigg(R_n+\|\hat{\theta}_n - \theta^*\| + \|X_i-X_j\|\bigg)\right) (x-X_i)p_{\hat \theta_n}(x) \diff \mathcal{H}^{d-1}(x)  \right\|^2 \\
    &= O\left((R_n+\|\hat{\theta}_n-\theta^{\ast}\|)^2\frac{1}{n} \sum_{i=1}^n \left\| n\sum_{j \sim i} \int_{\Gamma_{ij}} (x-X_i)p_{\hat \theta_n}(x) \diff \mathcal{H}^{d-1}(x)  \right\|^2\right) \\
    &\qquad + O\left(\frac{1}{n} \sum_{i=1}^n \left\| n\sum_{j \sim i} \int_{\Gamma_{ij}} \|X_i-X_j\| (x-X_i)p_{\hat \theta_n}(x) \diff \mathcal{H}^{d-1}(x)  \right\|^2 \right).
\end{align*}
Now we can show that both of these terms vanish.
For the first term, simply bound
\begin{align*}
    &O\left((R_n+\|\hat{\theta}_n-\theta^{\ast}\|)^2\frac{1}{n} \sum_{i=1}^n \left\| n\sum_{j \sim i} \int_{\Gamma_{ij}} (x-X_i)p_{\hat \theta_n}(x) \diff \mathcal{H}^{d-1}(x)  \right\|^2\right)\\
    &= O\left((R_n+\|\hat{\theta}_n-\theta^{\ast}\|)^2\frac{1}{n} \sum_{i=1}^n\left( n\int_{\partial V_i} \|x-X_i\|p_{\hat \theta_n}(x) \diff \mathcal{H}^{d-1}(x)\right)\right)^2 \\
    &= O(R_n+\|\hat{\theta}_n-\theta^{\ast}\|)^2\cdot O\left(\frac{1}{n} \sum_{i=1}^n\left( n\int_{\partial V_i} \|x-X_i\|p_{\hat \theta_n}(x) \diff \mathcal{H}^{d-1}(x)\right)\right)^2,
\end{align*}
which vanishes in probability by assumption.
For the second term, set $    D_n:=\max\left\{\|X_i-X_j\|: i\sim j\right\}$ which vanishes in probability since $U$ is bounded and connected and $\{p_{\theta}\}_{\theta\in\Theta}$ are uniformly bounded from below; then we have
\begin{align*}
    &O\left(\frac{1}{n} \sum_{i=1}^n \left\| n\sum_{j \sim i} \int_{\Gamma_{ij}} \|X_i-X_j\| (x-X_i)p_{\hat \theta_n}(x) \diff \mathcal{H}^{d-1}(x)  \right\|^2 \right) \\
    &= O(D_n)\cdot O\left(\frac{1}{n} \sum_{i=1}^n \left\| n\sum_{j \sim i} \int_{\Gamma_{ij}}(x-X_i)p_{\hat \theta_n}(x) \diff \mathcal{H}^{d-1}(x)  \right\|^2 \right) \\
    &= O(D_n)\cdot O\left(\frac{1}{n} \sum_{i=1}^n \left( n\int_{\partial V_i}\|x-X_i\|p_{\hat \theta_n}(x) \diff \mathcal{H}^{d-1}(x)  \right)^2 \right)
\end{align*}
which vanishes in probability.
Therefore, we conclude
\begin{equation*}
    \frac{1}{n}\sum_{i=1}^{n}\left\| n \nabla_{x_i} \frac{\partial }{\partial \theta} \mathcal{L}(X_1,\ldots, X_n,\hat{\theta}_n)\right\|^2 \to 4J(\theta^{\ast})
\end{equation*}
in probability as $n\to\infty$.

On the other hand, equation~\eqref{eqn:AuxWPEPartialDerivsDeveloped2} and condition $(i)$ imply that $\frac{\partial^2}{\partial \theta^2}\mathcal{L}(X_1,\ldots, X_n,\hat{\theta}_n)$ converges in probability to
\[ \left( \int_{\Rbb^d}  \frac{\partial}{\partial \theta}\psi_{P_\theta \rightarrow P_{\theta^*}}(x) \frac{\partial}{\partial \theta } p_{\theta}(x)\diff x + \int_{\Rbb^d} \psi_{P_\theta \rightarrow P_{\theta^*}}(x) \frac{\partial^2}{\partial \theta^2 } p_\theta (x)   \diff x \right) \bigg|_{\theta=\theta^*}. \]
Now note that we have $\psi_{P_\theta \rightarrow P_{\theta^*}} \equiv 0$ and thus the above expression reduces to 
\begin{align*}
\left( \int_{\Rbb^d}  \frac{\partial}{\partial \theta}\psi_{P_\theta \rightarrow P_{\theta^*}}(x) \frac{\partial}{\partial \theta } p_{\theta}(x)\diff x \right) \bigg|_{\theta=\theta^*}.
\end{align*}
Since Assumption~\ref{assum:AdditionalRegularity} implies that we can apply the pointwise continuity equation and then the divergence theorem, we can use equation \eqref{aux:GradientDualPotential} to conclude
\begin{align*}
 \left( \int_{\Rbb^d}  \frac{\partial}{\partial \theta}\psi_{P_\theta \rightarrow P_{\theta^*}}(x) \frac{\partial}{\partial \theta } p_{\theta}(x)\diff x \right) \bigg|_{\theta=\theta^*} & =  \left( \int_{\Rbb^d}  \left(\nabla_x \frac{\partial}{\partial \theta}\psi_{P_\theta \rightarrow P_{\theta^*}}(x)\right)^{\top}\Phi_{\theta^*}(x) p_{\theta^*}(x) \diff x\right)\bigg|_{\theta=\theta^*}
 \\& =  \left( \int_{\Rbb^d}   \left(\frac{\partial}{\partial \theta} \nabla_x \psi_{P_\theta \rightarrow P_{\theta^*}}(x)\right)^{\top} \Phi_{\theta^*}(x) p_{\theta^*}(x) \diff x\right)\bigg|_{\theta=\theta^*}
 \\& = 2 \int_{\Rbb^d} |\Phi_{\theta^*}(x)|^2  p_{\theta^*}(x) \diff x = 2 J(\theta^*).
\end{align*}
Thus, we have $\frac{\partial^2}{\partial \theta^2}\mathcal{L}(X_1,\ldots, X_n,\hat\theta_n) \to 2J(\theta^{\ast})$ in probability.

Therefore, we have shown that the right-side of equation~\eqref{eqn:WPE_Sensitivity} converges to $(2J(\theta^{\ast})^{-2}4J(\theta^{\ast}) = (J(\theta^{\ast}))^{-1}$, and this finishes the proof.
\end{proof}

\section{Further Discussions from Section~\ref{sec:asymp}}\label{app:discussions-asymp}

In Remark~\ref{rem:uniform-cvg-pot} we showed that assumption $(i)$ of Theorem \ref{thm:WPE-efficient-gen} can be simplified in some concrete examples.
Presently, we give the full calculations which show this.

First, we consider the case of a location family, as in Example~\ref{ex:loc-fam}, where we claim that we have
   \[ \nabla_x \frac{\partial}{\partial \theta} \psi^c_{P_\theta \to \bar{P}_n}(x) = \nabla_x \frac{\partial}{\partial \theta} \psi^c_{P_\theta \to {P}_{\theta^*}}(x) = -2\idmat_d   \]
   To see this, note that the optimal transport map between $P_\theta$ and an arbitrary $Q \in \mathcal{P}_2(\Rbb^d)$ can be written in terms of the transport map between $P_0$ and $Q$ according to
   \[ \transport_{P_\theta \to Q}(x) =  \transport_{P_0 \to Q} (x - \theta),\]
   thanks to Brenier's theorem. From this identity and \eqref{aux:GradientDualPotential} in Appendix \ref{app:KantDuality}, we have the following identity for the potentials:
   \[ \psi_{P_\theta \to Q}(x) = \psi_{P_0 \to Q}(x-\theta) + 2\theta \cdot (x-\theta) + C(\theta), \]
   where $C(\cdot)$ is a function of $\theta$. This implies
   \[  \psi^c_{P_\theta \to Q} (x) =  \psi^c_{P_0 \to Q}(x) -2 \theta \cdot x + \tilde{C}(\theta), \]
    so differentiating with respect to $\theta$ and then with respect to $x$ yields
\[ \nabla_x \nabla_\theta \psi^c_{P_\theta \to Q}(x) = -2\idmat_d,  \]
regardless of $Q$.
Taking $Q=P_{\theta^{\ast}}$ and $Q=\bar P_n$ gives the result.

Second, we consider the case of a scale family, as in Example \ref{ex:var-fam}.
We can write the optimal transport map between $P_\theta$ and an arbitrary $Q \in \mathcal{P}_2(\Rbb^d)$ in terms of the transport map between $P_0$ and $Q$ according to
   \[ \transport_{P_\theta \to Q}(x) =  \transport_{P_1 \to Q} \left(\frac{x}{\theta}\right),\]
thanks to Brenier's theorem. This leads to
\[   \psi^c_{P_\theta \to Q}(x) =     (1-\theta)\lVert x \rVert^2 + \theta \psi^c_{P_1 \to Q}(x) + C(\theta),     \]
which in turn implies 
\[  \nabla_x \frac{\partial}{\partial \theta} \psi^c_{P_\theta \to Q}(x) = \nabla_x \psi^c_{P_1 \rightarrow Q}(x) -2x.  \]
In particular, we have shown
\[   \nabla_x \frac{\partial}{\partial \theta} \psi^c_{P_\theta \to \bar{P}_n}(x) -  \nabla_x \frac{\partial}{\partial \theta} \psi^c_{P_\theta \to P_{\theta^*}}(x) =   \nabla_x  \psi^c_{P_1 \to \bar{P}_{n}}(x)  -\nabla_x  \psi^c_{P_1 \to P_{\theta^*}}(x). \]
We see from the above that $(i)$ in Theorem \ref{thm:WPE-efficient-gen} reduces to the uniform consistency for the gradients of the potentials for the single model $P_1$.
\end{appendix}

\bibliographystyle{imsart-number} 
\bibliography{refs}       

\end{document}